\numberwithin{equation}{section}
\theoremstyle{definition}
\newtheorem{theorem}{Theorem}
\newtheorem{lemma}{Lemma}
\newtheorem{definition}{Definition}
\newtheorem{remark}{Remark}
\newcommand{\h}{\mathfrak{h}}
\begin{document}
	
	\title{Optimal $L^2$ error estimation for the unfitted interface finite element method based on the  non-symmetric Nitsche's methods}

\author{Gang Chen%
	\thanks{ College of  Mathematics, Sichuan University, Chengdu 610064, China (\mbox{cglwdm@scu.edu.cn}).}
	\and
	Chaoran Liu%
	\thanks{ College of  Mathematics, Sichuan University, Chengdu 610064, China.}
	\and
	Yangwen Zhang%
	\thanks{Department of Mathematics, University of Louisiana at Lafayette, Lafayette, LA, USA (\mbox{yangwen.zhang@louisiana.edu}).}
}

\date{\today}

\maketitle

\begin{abstract}
	This paper establishes optimal error estimates in the $L^2$ for the non-symmetric Nitsche method in an unfitted interface finite element setting. Extending our earlier work, we give a complete analysis for the Poisson interface model and, by formulating a tailored dual problem that restores adjoint consistency, derive the desired bounds.
\end{abstract}

\section{Introduction.}

Let $\Omega$ be a convex polygonal/polyhedron domain in $\mathbb R^d$ ($d=2,3$) with an immersed interface $\Gamma$.
The interface $\Gamma$ is assumed as enclosed and $C^2$. The domian is divedid into two domains $\Omega_1$ and $\Omega_2$. We consider the following model problem: Find $u\in H^1(\Omega_1)\cap H^1(\Omega_2)$ such that
\begin{align}\label{org}
	\begin{split}
		-\nabla\cdot(\mu\nabla u)&=f\qquad\ \ \text{in } \Omega,\\
		[\![\mu\nabla u\cdot\bm n]\!]&= g_N\qquad \text{on }\Gamma, \\
		[\![u]\!]&=g_D\qquad\text{on }\Gamma,\\
		u&=0\qquad\text{on }\partial\Omega,
	\end{split}
\end{align}
where $\mu=\mu_i$ on $\Omega_i$ ($i=1,2$) are piece-wise constants, $u=u_i$ on $\Omega_i$ ($i=1,2$), $f\in L^2(\Omega)$, $g_N\in H^{\frac 1 2}(\Gamma)$, $g_D\in H^{\frac 3 2}(\Gamma)$, and $\bm n$ is the unit normal vector of $\Gamma$ from $\Omega_1$ point to $\Omega_2$, and 
\begin{align*}
	[\![u]\!] = u_1-u_2,\qquad
	[\![\mu\nabla u\cdot\bm n]\!]
	=\mu_1\nabla u_1\cdot\bm n {-}
	\mu_2\nabla u_2\cdot\bm n.
\end{align*}
{ 
	Interface problems find extensive applications in engineering and scientific fields, serving as a fundamental and ubiquitous subject of study in areas such as multiphysics coupling, multiphase flow, and composite material mechanics. The PDEs describing these problems are coupled across an interface, which partitions the domain into several subdomains. These subdomains may exhibit complex geometries or possess distinct material properties, for instance, consider the transmission conditions for temperature and heat flux across an interface separating media with different thermal conductivities in heat conduction problems.
	
	For the numerical solution of interface problems, careful consideration must be given to both the geometric complexity of the interface and the strict enforcement of interface conditions. Traditional fitted finite element methods often incur significant computational costs when handling complex or evolving interfaces due to their requirement for strict conformity between the mesh and the interface geometry.
	Consequently, unfitted finite element methods have emerged as a well-established approach for dealing with interface problems. Leveraging their defining characteristic that the mesh need not conform to the interface, these methods employ strategies such as cutting mesh elements across the interface and implementing specialized interface condition treatments. Within the unfitted FEM framework, a popular numerical technique utilizes the concept originally proposed by Nitsche \cite{MR341903} for weakly enforcing Dirichlet boundary conditions via penalty. The Nitsche's method, as a flexible technique for weak imposition, incorporates interface penalty terms into the variational formulation. This approach simultaneously circumvents the limitations of body-fitted meshes and, crucially, guarantees theoretical equivalence to the original problem formulation. As a result, the Nitsche's method is widely adopted for the simulation of interface problems using unfitted finite elements.
	
	The seminal work originates from Hansbo and Hansbo  \cite{MR1941489} in 2002, who introduced a two-dimensional unfitted finite element method based on the symmetric Nitsche's formulation. Their scheme's stability critically depends on a sufficiently large stabilization parameter. Subsequently, in 2018 (arxiv version is in 2016), Burman \cite{MR3800035} developed a stabilized unfitted finite element method within the symmetric Nitsche framework, where an optimal L²-norm error estimate was established.
	Motivated by the goal of eliminating the stringent requirement on the stabilization parameter, in 2017, \cite{MR3668542} introduced an non-symmetric Nitsche formulation. While the sufficiently large requirement on the penalty parameter is omitted, its theoretical analysis yields only a suboptimal $L^2$ error estimate despite numerical evidence suggesting that it \emph{may} be optimal. Boiveau \cite{boiveau2015fitted} introduced a penalty-free non-symmetric Nitshce's method to apply unfitted problem, which also obtained a sub-optimal $L^2$-norm estimate theoretically but reached optimal in numerical results. 
	
	The origin of this theoretical deficiency is essentially linked to the specific characteristics of interface problems based on non-symmetric Nitsche's method. The introduction of non-symmetric terms further breaks the adjoint-consistency of the variational formulation, rendering the classical duality argument (Aubin-Nitsche trick) used for $L^2$-norm estimates difficult to apply directly. Crucially, the interaction between the non-symmetric penalty term and these error sources has not been fully revealed. Therefore, breaking through the theoretical barrier preventing optimal $L^2$-norm estimates for the non-symmetric Nitsche's method in unfitted simulations of interface problems is important. It is to the benefit of strengthening the theoretical foundations of unfitted finite elements and advancing their application to complex interface problems in engineering.
	
	This paper focuses on improving the $L^2$ error estimation for the unfitted finite element method based on the non-symmetric Nitsche's method. The main purposes include:
	\begin{itemize}
		\item Constructing an improved duality argument framework suitable for non-symmetric variational formulations. This framework will be combined with specialized regularity analysis near the interface to overcome the theoretical obstacles posed by non-symmetry.
		
		\item Rigorously proving optimal $L^2$-norm error estimates for the non-symmetric Nitsche method applied to classical interface problems \eqref{org} and validating the theoretical results through numerical experiments.
	\end{itemize}
}




The outline of this paper is as follows: In \Cref{section2}, we give some key notations and basic results. In \Cref{section3}, we introduce the non-symmetric Nitshce's methods briefly. In \Cref{section4}, we establish the regularity  analysis of dual problem we will use in $L^2$ norm estimate. In \Cref{section5}, we construct a special dual problem to prove the optimal $L^2$ estimation.

\section{Notations and Basic results}\label{section2}
{ In this section, we present some notations and basic  results to be used in this paper, as well as the non-symmetric Nitsche's method for the problem \eqref{org}.
	
	\subsection{Notations}
	We first introduce some key notations. 
	For $s \ge0$, let $H^{s}(\Omega)$, $H_{0}^{s}(\Omega)$ and $H^s(\Gamma)$ represent standard s-th-order Sobolev spaces on $\Omega$ and $\Gamma$, with $L^2(\Omega)=H^0(\Omega)$. Their dual spaces are $H^{-s}(\Omega)$ and $H^{-s}(\Gamma)$, respectively. Norms $\|\cdot\|_{s, \Omega}$, $\|\cdot\|_{s, \Gamma}$ and semi-norms $|\cdot|_{s, \Omega}$, $|\cdot|_{s, \Gamma}$ act on these spaces. For simplicity, we often use $\|\cdot\|_s$ and $|\cdot|_s $ to replace $\|\cdot\|_{s,\Omega}$ and $|\cdot|_{s,\Omega}$. 
	We denote the $L^2$-inner product on $L^2(\Omega)$ by
	\begin{align*}
		(u,v)=\int_\Omega uv\;{\rm d}x.
	\end{align*}
	In particular, when $\Gamma\subset \mathbb{R}^{d-1}$, we use inner product on $L^2(\Gamma)$: $\langle\cdot, \cdot\rangle_{\Gamma}=\int_\Gamma uv\;{\rm d}\sigma$ to replace $(\cdot, \cdot)_{\Gamma}$. 
}

We define the space 
\begin{align*}
	V=\{v\in L^2(\Omega): v|_{\Omega_1}\in H^1(\Omega_1),v|_{\Omega_2}\in H^1(\Omega_2), v|_{\partial\Omega}=0\}.
\end{align*}
For any $v\in V$, we also use the notation
\begin{align}
	v_1:=v|_{\Omega_1}\qquad v_2:=v|_{\Omega_2}.
\end{align}

We define the weighted averages of $v\in V$ on the interface $\Gamma$ as
\begin{align}
	\{\!\!\{v\}\!\!\}
	=w_1v_1+w_2v_2.
\end{align}
where the non-negative constants $w_1, w_2$ satisfies $w_1+w_2=1$, where $w_1$ and $w_2$ will be specified later.

We define the jump of $v\in V$ on the interface $\Gamma$ as
\begin{align}
	[\![v]\!] = v_1-v_2.
\end{align}

{ 
	\subsection{Basic results}
	We now state basic results used later in the article.
	
	It's easy to get the following lemma with the help of \cite[Lemma 12.15]{Ern2021} and Young's inequality.
	\begin{lemma}[The trace inequality]\label{trace}
		For any $T\in\mathcal T_h$ and $v_h\in \mathcal P_k(T)$, there holds
		\begin{align*}
			\|v_h\|_{{\partial T}}\le C(h_T^{-\frac 1 2} \|v_h\|_{T}+h_{{T}}^{\frac 1 2}\|\nabla v_h\|_{{T}}),
		\end{align*}
		{where $\mathcal{P}_k(T)$ is the polynomial space on the element $T$.}
	\end{lemma}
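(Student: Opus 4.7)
The plan is to derive this scaled trace estimate by a standard reference-element pullback, which is essentially the content of Ern--Guermond's Lemma 12.15 combined with a square-root step. First, I would invoke the continuous trace theorem on a fixed reference element $\hat{T}$: for every $\hat{v}\in H^1(\hat{T})$,
$$\|\hat{v}\|_{\partial\hat{T}}^2 \le C\bigl(\|\hat{v}\|_{\hat{T}}^2+\|\nabla\hat{v}\|_{\hat{T}}^2\bigr).$$
This is a shape-independent fact that depends only on the geometry of $\hat{T}$.

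Next, I would pull the inequality through the affine map $F_T:\hat{T}\to T$. Under shape regularity of $\mathcal T_h$, the standard scalings $|\det DF_T|\sim h_T^{d}$, $\|DF_T\|\sim h_T$, $\|DF_T^{-1}\|\sim h_T^{-1}$, together with $|\partial T|\sim h_T^{d-1}$, convert the reference estimate into the squared form
$$\|v_h\|_{\partial T}^2 \le C\bigl(h_T^{-1}\|v_h\|_{T}^2 + h_T\|\nabla v_h\|_{T}^2\bigr).$$
The change of variables is identical on every face of $T$, so summing the face contributions gives the same bound on all of $\partial T$.

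Finally, I would pass from the squared form to the stated form by taking square roots and applying the subadditivity $\sqrt{a+b}\le\sqrt{a}+\sqrt{b}$, which is the rudimentary Young-type inequality invoked in the hint. This yields
$$\|v_h\|_{\partial T} \le C\bigl(h_T^{-1/2}\|v_h\|_{T} + h_T^{1/2}\|\nabla v_h\|_{T}\bigr),$$
as claimed. There is essentially no obstacle: the only place requiring care is the bookkeeping of the $h_T$ powers in the pullback, and even that is routine under shape regularity. I also note that the polynomial hypothesis $v_h\in\mathcal P_k(T)$ is not actually used in this argument; the same conclusion holds for any $v\in H^1(T)$. The polynomial structure only becomes essential if one wants to drop the gradient term and obtain the sharper inverse estimate $\|v_h\|_{\partial T}\le C h_T^{-1/2}\|v_h\|_{T}$, which is not what is asserted here.
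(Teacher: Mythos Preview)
Your proposal is correct and matches the paper's approach exactly: the paper simply cites \cite[Lemma~12.15]{Ern2021} (the reference-element trace estimate you spell out) and then applies Young's inequality, which plays the same role as your square-root subadditivity step. Your remark that the polynomial hypothesis is unnecessary for this particular inequality is also accurate.
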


	Here and in what follows, $C$ denotes a generic positive constant independent of both the mesh parameter $h,h_T,h_E,\mathfrak{h}$ and the parameter $c_0, \mu_1,\mu_2 $.

	The following lemma is a direct corollary of \cite[Theorem 4.5.11]{Brenner2008}.
	\begin{lemma}[The inverse inequality]\label{inverse}
		Let $T\in\mathcal{T}_h$ be  quasi-uniform, and $\mathcal{P}_k(T)$ be the polynomial space on the element $T$. Then, for any $v_h \in \mathcal{P}_k(T)$ and $T \in \mathcal{T}_h$, we have
		\[
		|v_h|_{1,T} \leq C h_T^{-1}\|v_h\|_{0,T}.
		\]
	\end{lemma}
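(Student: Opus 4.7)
The estimate is the classical local inverse inequality, and the plan is to establish it by a standard reference-element/scaling argument; this is essentially the content of \cite[Theorem 4.5.11]{Brenner2008}, so the proposal is to recall the three short steps that make up that proof and to check that every constant can be kept independent of the element $T$.

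First, I would fix a reference element $\hat T$ (say, the unit simplex in $\mathbb R^d$) and, for the given $T\in\mathcal T_h$, introduce the affine bijection $F_T\colon \hat T\to T$, $F_T(\hat x)=B_T\hat x+b_T$. The shape-regularity implied by quasi-uniformity produces uniform bounds $\|B_T\|\le C h_T$, $\|B_T^{-1}\|\le C h_T^{-1}$, and $c h_T^d\le|\det B_T|\le C h_T^d$. A direct change of variables then yields the scaling relations
\[
\|v_h\|_{0,T}\ge c\, h_T^{d/2}\|\hat v_h\|_{0,\hat T}, \qquad |v_h|_{1,T}\le C\, h_T^{d/2-1}|\hat v_h|_{1,\hat T},
\]
for the pulled-back polynomial $\hat v_h:=v_h\circ F_T\in\mathcal P_k(\hat T)$.

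Second, since $\mathcal P_k(\hat T)$ is finite-dimensional, all norms on it are equivalent; in particular there is a constant $\hat C=\hat C(k,\hat T)$ such that $|\hat v_h|_{1,\hat T}\le \hat C\,\|\hat v_h\|_{0,\hat T}$. Chaining this with the two scaling relations above immediately gives $|v_h|_{1,T}\le C h_T^{-1}\|v_h\|_{0,T}$, which is the claimed bound.

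The only point requiring care, and the ``main obstacle'' in a very weak sense, is ensuring the constants in the Jacobian bounds and in the norm-equivalence on $\hat T$ are genuinely independent of $T$ and $h_T$. This is exactly what shape-regularity (here delivered by quasi-uniformity of the mesh) provides, so no essential analytical difficulty arises; the argument is entirely standard bookkeeping once the reference element has been fixed.
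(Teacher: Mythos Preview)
Your proposal is correct and matches the paper's treatment: the paper does not give an independent proof but simply states that the lemma is a direct corollary of \cite[Theorem 4.5.11]{Brenner2008}, which is precisely the reference-element/scaling argument you outline.
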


}

\section{Finite element method}\label{section3}

{ 
	Let $\mathcal{T}_{h}=\cup\{T\}$ be a shape regular (there exits a constant $\sigma>0$ such that for all $T\in\mathcal{T}_h$, $h_T/\rho_T\le\sigma$ where $\rho_T$ is the radius of the inscribed ball in $T$)  simplicial decomposition of the domain $\Omega$. Here, $h=\max_{T \in \mathcal{T}_{h}} h_{T}$, and $h_{T}$ is the diameter of $T$. We decouple $\mathcal{T}_h$ into two parts according to the division of region $\Omega$. They are respectively  denoted as 
	\begin{align}
		\mathcal T_h^1= \{ T\in\mathcal T_h, T\cap \Omega_1\neq \emptyset  \},\qquad	\mathcal T_h^2= \{ T\in\mathcal T_h, T\cap \Omega_2\neq \emptyset  \}.
	\end{align}
	
	Then set 
	\begin{align}
		\mathcal T_h^{\Gamma}
		=\{T\in\mathcal T_h, \overline T\cap\Gamma\neq \emptyset \}.
	\end{align}
	$\mathcal{T}_h^{\Gamma}$ is the set of all elements that intersect the interface, where $\overline{T}$ denotes the closure of $T$.
	Let $\mathcal{E}_{h}=\cup \{E\}$ be the union of all the edges (faces) of $T \in \mathcal{T}_{h}$. Denote $h_E$ as the diameter of $E$, and define $\mathfrak{h}$ such that $\mathfrak{h}|_T:=h_T$ and $\mathfrak{h}|_E:=h_E$.
	
	Then we define the finite element space as follow:}


\begin{align}
	\begin{split}
		V_h=&\{v_h\in L^2(\Omega):
		v_h|_{\Omega_1}=v_{h,1},
		v_h|_{\Omega_2}=v_{h,2},v_{h,1}\in H^1(\mathcal T_h^1)\cap \mathcal P_k(\mathcal T_h^1),
		\\
		&
		v_{h,2}\in H^1(\mathcal T_h^2)\cap \mathcal P_k(\mathcal T_h^2), v_1|_{\partial\Omega_1/\Gamma}=0,
		v_2|_{\partial\Omega_2/\Gamma}=0\}.
	\end{split}
\end{align}

{ 
	For any $T\in\mathcal T_h$, we define the standard $L^2$-projection $\Pi_{k}^o: L^2(T)\to \mathcal P_k(T)$ as:
	\begin{align}\label{pro1}
		(\Pi_k^ou, v_h)_T&=(u, v_h)_T \quad \forall v_h\in  \mathcal P_k(T).
	\end{align}
	This kind of projection will be applied in \Cref{section5} for estimating the standard Scott-Zhang interpolation.
	For the projections $\Pi_k^o$ with $k\ge0$, the following stability and approximation results are basic from \cite[Theorem 2.6]{MR2431403} and inverse inequality.

	\begin{lemma}For any $T\in\mathcal{T}_h$ and nonnegative integer $k$, it holds
		\begin{align}
			&  \| u-\Pi_k^o u \|_{0, T}+h_T | u-\Pi_k^o u |_{1, T}\lesssim  h_T^{s}|v|_{s, T}, \quad\forall u \in H^{s}(T), \\
			&  \|\Pi_k^o v \|_{0, T} \leq\|v\|_{0, T}, \quad\forall v \in L^2(T),
		\end{align}
		where $1 \leq s \leq k+1$.
	\end{lemma}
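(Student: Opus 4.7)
The plan is to establish the two estimates separately. The stability bound is immediate from the orthogonality defining $\Pi_k^o$: since $\Pi_k^o v \in \mathcal{P}_k(T)$, it can be used as a test function in \eqref{pro1}, yielding $\|\Pi_k^o v\|_{0,T}^2 = (v,\Pi_k^o v)_T$. Applying the Cauchy--Schwarz inequality and dividing through gives $\|\Pi_k^o v\|_{0,T} \le \|v\|_{0,T}$.

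For the $L^2$ approximation estimate, the key observation is that $\Pi_k^o$ preserves polynomials of degree up to $k$, i.e.\ $\Pi_k^o p = p$ for every $p \in \mathcal{P}_k(T)$. Consequently, $u - \Pi_k^o u = (I - \Pi_k^o)(u-p)$ for any such $p$, and combining this with the stability bound just proved,
\begin{align*}
\|u-\Pi_k^o u\|_{0,T} \;\le\; 2\inf_{p\in\mathcal{P}_k(T)}\|u-p\|_{0,T}.
\end{align*}
Taking $p$ to be the averaged Taylor polynomial of $u$ of degree $k-1$ (well defined on the shape-regular simplex $T$) and invoking the Bramble--Hilbert lemma produces $\inf_{p}\|u-p\|_{0,T} \lesssim h_T^s |u|_{s,T}$ for all $1 \le s \le k+1$, which is the desired bound.

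The $H^1$ seminorm contribution is the only piece that needs an extra ingredient, because $\Pi_k^o$ does not directly control gradients. I would again insert a polynomial $p \in \mathcal{P}_k(T)$ and split
\begin{align*}
|u - \Pi_k^o u|_{1,T} \;\le\; |u-p|_{1,T} + |\Pi_k^o u - p|_{1,T}.
\end{align*}
The first term is controlled by $h_T^{s-1}|u|_{s,T}$ via the same Bramble--Hilbert argument. For the second term, note that $\Pi_k^o u - p = \Pi_k^o(u-p) \in \mathcal{P}_k(T)$, so the inverse inequality of \Cref{inverse} gives $|\Pi_k^o(u-p)|_{1,T} \lesssim h_T^{-1} \|\Pi_k^o(u-p)\|_{0,T} \le h_T^{-1}\|u-p\|_{0,T}$, using the stability of $\Pi_k^o$. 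Choosing $p$ to be the averaged Taylor polynomial once more and applying Bramble--Hilbert to $\|u-p\|_{0,T}$ yields $h_T^{-1} \cdot h_T^{s}|u|_{s,T} = h_T^{s-1}|u|_{s,T}$, and multiplying by $h_T$ produces the stated $h_T^s |u|_{s,T}$ contribution.

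The only real obstacle is the standard one of selecting an appropriate polynomial approximation $p$ for which Bramble--Hilbert applies; once that choice is fixed, everything reduces to projection stability and the inverse inequality already recorded in \Cref{inverse}. No subtleties tied to the interface geometry arise, since the estimate is element-wise on a single $T \in \mathcal{T}_h$.
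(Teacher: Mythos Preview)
Your argument is correct and is essentially what the paper intends: the paper does not give a detailed proof at all, but simply asserts the result is ``basic from \cite[Theorem~2.6]{MR2431403} and inverse inequality,'' i.e.\ the $L^2$ bound is quoted from the literature and the $H^1$-seminorm bound follows by combining it with the inverse inequality of \Cref{inverse}. Your write-up fills in exactly these details via polynomial preservation, $L^2$-stability, Bramble--Hilbert, and then the inverse inequality on $\Pi_k^o(u-p)$.

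One small slip: the averaged Taylor polynomial should be taken of degree $k$ (not $k-1$), since you are approximating in $\mathcal P_k(T)$ and need the full range $1\le s\le k+1$; with degree $k-1$ you would only reach $s\le k$.
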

}

\begin{lemma} \label{lem:por-averge}We have the following equations
	\begin{align*}
		\{\!\!\{\mu\nabla u\cdot\bm n\}\!\!\}
		&=\mu_1\nabla u_1\cdot\bm n-w_2	[\![\mu\nabla u\cdot\bm n]\!],\\
		\{\!\!\{\mu\nabla u\cdot\bm n\}\!\!\}
		&=\mu_2\nabla u_2\cdot\bm n+w_1	[\![\mu\nabla u\cdot\bm n]\!]
	\end{align*}
\end{lemma}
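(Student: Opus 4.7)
The plan is purely algebraic: both identities follow by expanding the definitions of $\{\!\!\{\cdot\}\!\!\}$ and $[\![\cdot]\!]$ on $\Gamma$ and using the normalization $w_1+w_2=1$. There is no analysis here; the only reason to state the lemma is that these two rewrites of the weighted average in terms of a one-sided trace plus a jump are used repeatedly when symmetrizing or antisymmetrizing the Nitsche interface terms in \Cref{section3,section5}.

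For the first identity, I would start from the definition
\[
\{\!\!\{\mu\nabla u\cdot\bm n\}\!\!\}=w_1\,\mu_1\nabla u_1\cdot\bm n+w_2\,\mu_2\nabla u_2\cdot\bm n,
\]
replace $w_1$ by $1-w_2$, and group terms:
\[
\{\!\!\{\mu\nabla u\cdot\bm n\}\!\!\}=\mu_1\nabla u_1\cdot\bm n-w_2\bigl(\mu_1\nabla u_1\cdot\bm n-\mu_2\nabla u_2\cdot\bm n\bigr)=\mu_1\nabla u_1\cdot\bm n-w_2[\![\mu\nabla u\cdot\bm n]\!].
\]
The second identity is obtained symmetrically by replacing $w_2$ with $1-w_1$ and factoring to produce $\mu_2\nabla u_2\cdot\bm n+w_1[\![\mu\nabla u\cdot\bm n]\!]$.

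There is no genuine obstacle; the only thing to be careful about is the sign convention in the jump $[\![v]\!]=v_1-v_2$ (with $\bm n$ pointing from $\Omega_1$ to $\Omega_2$), which fixes whether the weight appearing in front of the jump is $w_1$ or $w_2$ on each side. Once that bookkeeping is written down, the result is immediate and the lemma can be stated with a one-line proof.
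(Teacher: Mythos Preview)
Your proposal is correct and follows essentially the same approach as the paper: expand the weighted average, use $w_1+w_2=1$ to rewrite one weight as $1$ minus the other, and regroup to expose the jump. The paper's own proof contains a couple of typographical slips in the final lines (writing $w_1$ where $w_2$ should appear, and $\mu_1\nabla u_1$ where $\mu_2\nabla u_2$ should appear), so your version is in fact cleaner.
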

\begin{proof} By a direct calculate, one has
	\begin{align*}
		\{\!\!\{\mu\nabla u\cdot\bm n\}\!\!\}
		&=w_1 \mu_1\nabla u_1\cdot\bm n+w_2\mu_2\nabla u_2\cdot\bm n\\
		&=\mu_1\nabla u_1\cdot\bm n+ (w_1-1) \mu_1\nabla u_1\cdot\bm n+w_2\mu_2\nabla u_2\cdot\bm n\\
		&=\mu_1\nabla u_1\cdot\bm n-w_1	[\![\mu\nabla u\cdot\bm n]\!].
	\end{align*}
	Similarity, it holds
	\begin{align*}
		\{\!\!\{\mu\nabla u\cdot\bm n\}\!\!\}
		&=w_1 \mu_1\nabla u_1\cdot\bm n+w_2\mu_2\nabla u_2\cdot\bm n\\
		&=w_1\mu_1\nabla u_1\cdot\bm n+\mu_2\nabla u_2\cdot\bm n+(w_2-1)w_2\mu_2\nabla u_2\cdot\bm n\\
		&=\mu_1\nabla u_1\cdot\bm n+w_1	[\![\mu\nabla u\cdot\bm n]\!].
	\end{align*}
	
\end{proof}
We followed the idea in \cite{MR2002258,MR2257119,MR2491426,MR3668542},  to choice $w_1, w_2$ and $c_0$ as
\begin{align}
	w_1=\frac{\mu_2}{\mu_1+\mu_2},\qquad 	w_2=\frac{\mu_1}{\mu_1+\mu_2},\qquad 	c_0:=\{\!\!\{\mu\}\!\!\}.
\end{align}
So it holds
\begin{align}\label{por-c0}
	\begin{split}
		&	\min(\mu_1,\mu_2)\le c_0\le  \max(\mu_1,\mu_2),\qquad c_0\le 2\min(\mu_1,\mu_2),\\
		&	 w_1\mu_1\le c_0,\qquad w_2\mu_2\le c_0.
	\end{split}
\end{align}
\begin{remark}
	We address that	\eqref{por-c0} is the key portieres to get the error estimation independent of jump of the coefficient.
	In \cite{MR1941489}, for any $T\in \mathcal T_h^{\Gamma}$, $\Gamma$ separate $T$ into two parts $T_1\subset \Omega_1$ and $T_2\subset \Omega_2$, the authors define
	\begin{align}
		w_1=\frac{|T_1|}{|T|},\qquad 	w_2=\frac{|T_2|}{|T|}.
	\end{align}
	In this case, no matter how to define $c_0$, there is no way to obtain the property \eqref{por-c0}, therefore, there is no way to get the error estimation independent of jump of the coefficient.
\end{remark}

We define 
\begin{align}
	\begin{split}
		a(u, v)
		&=\mu_1(\nabla u_1, \nabla v_1)_{\Omega_1}
		+ \mu_2(\nabla u_2, \nabla v_2)_{\Omega_2}\\
		&\quad
		-\langle \{\!\!\{\mu\nabla u\cdot\bm n\}\!\!\},[\![v]\!] \rangle_{\Gamma}
		+\langle [\![u]\!],\{\!\!\{\mu\nabla v\cdot\bm n\}\!\!\} \rangle_{\Gamma}
		+\langle c_0\h^{-1}[\![u]\!],[\![v]\!] \rangle_{\Gamma},
	\end{split}
\end{align}
and
\begin{align}
	\ell(v)=(f, v)+\langle  g_N, w_2v_1+w_1v_2\rangle_{\Gamma}
	+\langle
	g_D,\{\!\!\{\mu\nabla v\cdot\bm n\}\!\!\}
	\rangle_{\Gamma}
	+\langle c_0\h^{-1}g_D,[\![v]\!] \rangle_{\Gamma}.
\end{align}
Then by \Cref{lem:por-averge} we know that, the  solution of \eqref{org} are also the solution of the follow problem:  Find $u\in V$ such that
\begin{align}\label{fem-org}
	a(u, v)= \ell(v)
\end{align}
holds for all $v\in V$.

The finite element method reads: Find $u_h\in V_h$ such that
\begin{align} \label{fem}
	a(u_h, v_h)=\ell(v_h).
\end{align}

For any $v\in V$, we define the norm
\begin{align}
	\interleave v\interleave^2=\mu_1\|\nabla v_1\|_{\Omega_1}^2
	+\mu_2\|\nabla v_2\|_{\Omega_2}^2
	+c_0\|\h^{-\frac 1 2}[\![v]\!]\|_{\Gamma}^2.
\end{align}
Then  it is obviously that for any $v_h\in V_h$, it holds
\begin{align*}
	a(v_h,v_h)=	\interleave v_h\interleave^2.
\end{align*}
and by the definition of $a(\cdot,\cdot)$, the inverse inequality, for any $u_h, v_h\in V_h$, it holds
{ 
	\begin{align*}
		|a(u_h, v_h)|&\le C(
		\sum_{i=1}^2\mu_i^{\frac 1 2}\|\nabla u_{h, i}\|_{\Omega_i}
		+c_0^{-\frac{1}{2}}\|\h^{\frac 1 2}\{\!\!\{\mu\nabla u_h\cdot\bm n\}\!\!\}\|_{\Gamma}
		+
		c_0^{\frac 1 2}\|\h^{-\frac 1 2}[\![u_h]\!]\|_{\Gamma}
		) \\
		&\quad
		(
		\sum_{i=1}^2\mu_i^{\frac 1 2}\|\nabla v_{h, i}\|_{\Omega_i} 
		+c_0^{-\frac{1}{2}}\|\h^{\frac 1 2}\{\!\!\{\mu\nabla v_h\cdot\bm n\}\!\!\}\|_{\Gamma}
		+
		c_0^{\frac 1 2}\|\h^{-\frac 1 2}[\![v_h]\!]\|_{\Gamma}
		) \\
		&\le C(
		\sum_{i=1}^2\mu_i^{\frac 1 2}\|\nabla u_{h, i}\|_{\Omega_i} 
		+\sum_{i=1}^2(w_i\mu_i)^{\frac 1 2}\|\nabla u_{h, i}\|_{\Omega_i}
		+
		c_0^{\frac 1 2}\|\h^{-\frac 1 2}[\![u_h]\!]\|_{\Gamma}
		) \\
		&\quad
		(
		\sum_{i=1}^2\mu_i^{\frac 1 2}\|\nabla v_{h, i}\|_{\Omega_i} 
		+\sum_{i=1}^2(w_i\mu_i)^{\frac 1 2}\|\nabla v_{h, i}\|_{\Omega_i}
		+
		c_0^{\frac 1 2}\|\h^{-\frac 1 2}[\![v_h]\!]\|_{\Gamma}
		) \\
		&\le C	\interleave u_h\interleave \cdot	\interleave v_h\interleave.
\end{align*}}
With the above coerivetiy and continuity we have the following result  {by Lax-Milgram Theorem}.
\begin{lemma}[Existence of a unique solution] The problem \eqref{fem} has a unique solution $v_h\in V_h$.
\end{lemma}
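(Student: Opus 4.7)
The plan is to invoke the Lax--Milgram theorem on the finite-dimensional space $V_h$ equipped with $\interleave \cdot \interleave$. First I would verify that $\interleave \cdot \interleave$ is genuinely a norm on $V_h$ and not merely a seminorm: if $\interleave v_h \interleave = 0$, then $\nabla v_{h,i} = 0$ on $\Omega_i$ for $i = 1, 2$, so each $v_{h,i}$ is constant on every connected component of $\Omega_i$; combining with the homogeneous boundary conditions $v_{h,1}|_{\partial\Omega_1\setminus\Gamma} = 0$ and $v_{h,2}|_{\partial\Omega_2\setminus\Gamma} = 0$ baked into the definition of $V_h$, both pieces vanish and hence $v_h = 0$. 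This point-separating property is the only genuinely nontrivial input; everything else is bookkeeping.

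Coercivity is immediate from the identity already observed in the excerpt, $a(v_h, v_h) = \interleave v_h \interleave^2$, which itself follows from the skew-symmetric cancellation
\[
-\langle\{\!\!\{\mu\nabla v_h \cdot \bm n\}\!\!\}, [\![v_h]\!]\rangle_{\Gamma} + \langle[\![v_h]\!], \{\!\!\{\mu\nabla v_h \cdot \bm n\}\!\!\}\rangle_{\Gamma} = 0.
\]
This is precisely the design feature of the non-symmetric formulation that removes the large-penalty requirement of its symmetric counterpart. Continuity $|a(u_h, v_h)| \le C \interleave u_h \interleave \interleave v_h \interleave$ has already been derived in the excerpt using Cauchy--Schwarz, the trace inequality (\Cref{trace}), and the bounds $w_i \mu_i \le c_0$ from \eqref{por-c0}.

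It remains to bound the right-hand side. Applying Cauchy--Schwarz termwise to $\ell(v_h)$ and using the same trace inequality together with $w_1, w_2 \le 1$ and \eqref{por-c0} gives an estimate of the form $|\ell(v_h)| \le C(f, g_N, g_D)\,\interleave v_h \interleave$, with a constant depending on the data through suitable Sobolev norms. With coercivity, continuity, and a bounded linear functional all in place on the finite-dimensional Hilbert space $(V_h, \interleave\cdot\interleave)$, Lax--Milgram delivers a unique $u_h \in V_h$ solving \eqref{fem}. The main obstacle, such as it is, is confirming that $\interleave \cdot \interleave$ separates points on $V_h$; once that is settled, existence and uniqueness follow immediately, since in finite dimensions an injective linear operator is automatically bijective.
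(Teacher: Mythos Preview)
Your approach is the same as the paper's: both simply invoke Lax--Milgram after recording coercivity $a(v_h,v_h)=\interleave v_h\interleave^2$ and continuity, and indeed the paper's entire proof is the one-line remark ``With the above coercivity and continuity we have the following result by Lax--Milgram Theorem'' preceding the lemma.

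One small gap in your verification that $\interleave\cdot\interleave$ separates points: since $\Gamma$ is a closed interface enclosed in $\Omega$, one of the two subdomains---say $\Omega_2$---satisfies $\partial\Omega_2\setminus\Gamma=\emptyset$, so the boundary condition $v_{h,2}|_{\partial\Omega_2\setminus\Gamma}=0$ is vacuous and cannot by itself kill a constant $v_{h,2}$. You need to also use the third term of $\interleave\cdot\interleave$: from $c_0\|\h^{-1/2}[\![v_h]\!]\|_\Gamma^2=0$ you get $v_{h,1}=v_{h,2}$ on $\Gamma$; since the outer piece $v_{h,1}$ does touch $\partial\Omega$ and hence vanishes, the constant $v_{h,2}$ is forced to zero as well. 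With that one-line fix your argument is complete and in fact more detailed than the paper's.
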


{ From \Cref{fem-org} and \Cref{fem}, we have the following lemma.}

\begin{lemma}[Orthogonality]\label{orth} Let $u\in V$ and $u_h\in V_h$ be the solution of \eqref{org} and \eqref{fem}, respectively. Then we have
	\begin{align}
		a(u-u_h, v_h)=0\qquad \forall v_h\in V_h
	\end{align}
	holds for all $v_h\in V$.
\end{lemma}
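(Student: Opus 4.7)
The plan is to invoke the standard Galerkin orthogonality argument. First I would observe that by \Cref{lem:por-averge} and the derivation preceding \eqref{fem-org}, the exact solution $u \in V$ satisfies $a(u,v) = \ell(v)$ for every $v \in V$. Since $V_h \subset V$ -- each $v_{h,i}$ is a continuous piecewise polynomial on $\mathcal T_h^i$, hence lies in $H^1(\Omega_i)$, and $v_h$ vanishes on $\partial\Omega$ by construction -- I may specialize the test function to $v_h \in V_h$ to obtain
\begin{align*}
a(u, v_h) = \ell(v_h) \qquad \forall v_h \in V_h.
\end{align*}

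Next, by the definition \eqref{fem} of the discrete scheme, the discrete solution $u_h \in V_h$ satisfies $a(u_h, v_h) = \ell(v_h)$ for every $v_h \in V_h$. Subtracting the two identities and using the bilinearity of $a(\cdot, \cdot)$ in its first argument immediately yields
\begin{align*}
a(u - u_h, v_h) = 0 \qquad \forall v_h \in V_h,
\end{align*}
which is the desired orthogonality.

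There is no genuine obstacle here: the entire argument is a one-line consequence of consistency together with linearity of $a(\cdot,\cdot)$. The only point worth a brief sanity check is the inclusion $V_h \subset V$, which is needed so that the continuous formulation \eqref{fem-org} can legitimately be tested against discrete functions; this uses only the $H^1$-conformity of $V_h$ within each subdomain $\Omega_i$ and the built-in homogeneous Dirichlet condition on $\partial\Omega$. All trace terms appearing in $a(u,v_h)$ are well-defined because $u$ is smooth enough on each subdomain for the continuous formulation to make sense, while $v_h$ admits well-defined element traces on $\Gamma$.
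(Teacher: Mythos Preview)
Your proposal is correct and matches the paper's approach: the paper simply states that the lemma follows from \eqref{fem-org} and \eqref{fem}, which is exactly the consistency-plus-subtraction argument you wrote out. Your additional remark verifying $V_h \subset V$ is a useful sanity check that the paper leaves implicit.
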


\section{Some novel regularity results}\label{section4}

\subsection{The basic regularity results}

The following theorems are basic  {and} useful in our proof.
\begin{theorem} \label{cylinder-1}
	Let $\Lambda$ be a bounded convex domain,
	$\mathcal O$ be a bounded domain inside $\Lambda$ with $C^2$ boundary  $\partial\mathcal O$. Let $f\in L^2(\Lambda/\mathcal O)$, $g_{out}\in H^{\frac 3 2}(\partial\Lambda)$, $g_{in}\in H^{\frac 3 2}(\partial\mathcal O)$,  find $u\in H^1(\Lambda/\mathcal O)$ such that
	\begin{align*}
		-\Delta u&=f \qquad\quad \text{in }\Lambda/\mathcal O,\\
		u&=g_{in}\qquad\; \text{on }\partial\mathcal O,\\
		u&=g_{out} \qquad \text{on }\partial\Lambda.
	\end{align*}
	Then the above problem has a unique solution $u\in H^1(\Lambda/\mathcal O)$ and
	\begin{align*}
		\|u\|_{2,\Lambda/\mathcal O}\le C(\|f\|_{\Lambda/\mathcal O}
		+\|g_{out}\|_{\frac 3 2, \partial\Lambda}
		+\|g_{in}\|_{\frac 3 2,\partial\mathcal O}
		).
	\end{align*}
\end{theorem}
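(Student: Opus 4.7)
The plan is to reduce to homogeneous Dirichlet data and then use a partition of unity to decouple the contributions from the smooth inner boundary $\partial\mathcal{O}$ and the convex outer boundary $\partial\Lambda$, applying a different classical regularity result in each region.

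First I would handle existence, uniqueness, and the lift. Choose an $H^2$-extension $G\in H^2(\Lambda\setminus\overline{\mathcal O})$ with $G|_{\partial\Lambda}=g_{out}$, $G|_{\partial\mathcal O}=g_{in}$, and
\begin{align*}
\|G\|_{2,\Lambda/\mathcal O}\le C\bigl(\|g_{out}\|_{\frac32,\partial\Lambda}+\|g_{in}\|_{\frac32,\partial\mathcal O}\bigr),
\end{align*}
using the standard trace/lifting theorem (combined from the two boundary components, which are disjoint). Setting $w=u-G\in H^1_0(\Lambda\setminus\overline{\mathcal O})$, the problem becomes $-\Delta w=f+\Delta G=:\tilde f\in L^2(\Lambda\setminus\overline{\mathcal O})$ with zero Dirichlet data, and Lax--Milgram on $H^1_0(\Lambda\setminus\overline{\mathcal O})$ yields a unique weak solution. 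It then suffices to show $\|w\|_{2,\Lambda/\mathcal O}\le C\|\tilde f\|_{0,\Lambda/\mathcal O}$.

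Next I would introduce a smooth partition of unity $\chi_1+\chi_2\equiv 1$ on $\overline{\Lambda\setminus\mathcal O}$, with $\chi_1\equiv 1$ in a neighborhood of $\partial\mathcal O$ and $\mathrm{supp}\,\chi_1$ contained in a tubular neighborhood of $\partial\mathcal O$ that stays strictly away from $\partial\Lambda$, and $\chi_2$ supported strictly away from $\partial\mathcal O$. Write $w_i=\chi_i w$; each satisfies
\begin{align*}
-\Delta w_i=\chi_i\tilde f-2\nabla\chi_i\cdot\nabla w-w\Delta\chi_i =: F_i,
\end{align*}
with $F_i\in L^2$ controlled by $\|\tilde f\|_0$ plus the energy $\|w\|_1$, and the energy in turn is bounded by $\|\tilde f\|_0$ via the variational formulation and Poincar\'e's inequality. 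For $w_1$, the support is a domain with $C^2$ boundary (a collar of $\partial\mathcal O$), $w_1$ vanishes on $\partial\mathcal O$, and $w_1\equiv 0$ near the other side; classical interior/$C^2$-boundary elliptic regularity (see Grisvard) gives $\|w_1\|_{2}\le C\|F_1\|_0$. For $w_2$, I extend $w_2$ by zero across $\partial\mathcal O$ onto the full convex domain $\Lambda$; since $w_2$ vanishes identically in a neighborhood of $\partial\mathcal O$, the extension lies in $H^1_0(\Lambda)$ and satisfies $-\Delta w_2=\widetilde F_2\in L^2(\Lambda)$. Kadlec's $H^2$-regularity theorem for the Dirichlet Laplacian on a bounded convex domain then yields $\|w_2\|_{2,\Lambda}\le C\|\widetilde F_2\|_{0,\Lambda}$. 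Combining and using $w=w_1+w_2$ and the lift of the boundary data gives the claimed estimate.

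The main obstacle will be the outer convex piece: the domain $\Lambda\setminus\overline{\mathcal O}$ itself is not convex, so Kadlec's theorem does not apply directly to $w$ globally. The device of cutting off $w$ away from $\partial\mathcal O$ and then extending by zero so that one can pass to the convex domain $\Lambda$ is what makes the convex regularity usable, and verifying that the support condition is preserved under this extension (so that the extension actually lies in $H^1_0(\Lambda)$ and solves a genuine Poisson problem there) is the one delicate point. The companion piece near $\partial\mathcal O$ is then routine since $\partial\mathcal O$ is $C^2$ and the cutoff localizes away from any boundary corners of $\partial\Lambda$.
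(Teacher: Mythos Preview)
Your proof strategy is sound: the lifting to homogeneous data, the Lax--Milgram existence, and the partition-of-unity localization are all correct. The key observation---that $\Lambda\setminus\overline{\mathcal O}$ is not convex, so one must separate the $C^2$ inner boundary (handled by classical elliptic regularity) from the convex outer boundary (handled by extending by zero and invoking Kadlec's theorem on the full convex $\Lambda$)---is exactly the right idea, and your verification that the zero extension of $w_2$ yields a genuine $H^1_0(\Lambda)$ solution of a Poisson problem is the correct check.

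However, you should know that the paper does \emph{not} prove this theorem at all. It is stated in Section~4.1 under the heading ``The basic regularity results'' together with two companion results (a Neumann analogue and a Neumann a~priori bound), all of which are simply asserted as known and then used as black boxes in the subsequent regularity analysis for the interface and Robin problems. So there is no ``paper's own proof'' to compare against; your argument supplies a complete proof where the paper gives none. What your approach buys is a self-contained justification that makes explicit how convexity of $\Lambda$ and $C^2$ smoothness of $\partial\mathcal O$ each contribute, which is more informative than a bare citation.
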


\begin{theorem} \label{cylinder-2}
	Let $\Lambda$ be a bounded convex domain,
	$\mathcal O$ be a bounded domain inside $\Lambda$ with $C^2$ boundary  $\partial\mathcal O$. Let $f\in L^2(\Lambda/\mathcal O)$, $g_{out}\in H^{\frac 3 2}(\partial\Lambda)$, $g_{in}\in H^{\frac 1 2}(\partial\mathcal O)$,  find $u\in H^1(\Lambda/\mathcal O)$ such that
	\begin{align*}
		-\Delta u&=f \qquad \text{in }\Lambda/\mathcal O,\\
		\nabla u\cdot\bm n&=g_{in}\qquad \text{on }\partial\mathcal O,\\
		u&=g_{out} \qquad \text{on }\partial\Lambda,
	\end{align*}
	Then the above problem has a unique solution $u\in H^1(\Lambda/\mathcal O)$ and
	\begin{align*}
		\|u\|_{2,\Lambda/\mathcal O}\le C(\|f\|_{\Lambda/\mathcal O}
		+\|g_{out}\|_{\frac 3 2, \partial\Lambda}
		+\|g_{in}\|_{\frac 1 2,\partial\mathcal O}
		).
	\end{align*}
\end{theorem}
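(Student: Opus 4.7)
My plan is to establish existence and uniqueness by a standard Lax--Milgram argument, and to derive the $H^2$ bound by reducing \Cref{cylinder-2} to the setting of \Cref{cylinder-1} through boundary-data liftings, followed by a partition of unity that isolates the two pieces of $\partial(\Lambda/\mathcal O)$.

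Existence and uniqueness of a weak solution in $\{v\in H^1(\Lambda/\mathcal O):v|_{\partial\Lambda}=0\}$ follow from Lax--Milgram once $g_{out}$ is absorbed into a lift: the bilinear form $(\nabla\cdot,\nabla\cdot)_{\Lambda/\mathcal O}$ is coercive by the Poincar\'e inequality (available because the Dirichlet trace is taken on the full outer boundary $\partial\Lambda$), and the right-hand side extends continuously to $H^1$ through the $H^{-1/2}$--$H^{1/2}$ pairing against $g_{in}\in H^{1/2}(\partial\mathcal O)$.

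For the regularity bound, first I would construct a lifting $\phi\in H^2(\Lambda/\mathcal O)$ with $\phi|_{\partial\Lambda}=g_{out}$, $\nabla\phi\cdot\bm n|_{\partial\mathcal O}=g_{in}$, and
\begin{equation*}
\|\phi\|_{2,\Lambda/\mathcal O}\le C\bigl(\|g_{out}\|_{\frac 3 2,\partial\Lambda}+\|g_{in}\|_{\frac 1 2,\partial\mathcal O}\bigr).
\end{equation*}
Since $\partial\Lambda$ and $\partial\mathcal O$ are disjoint, one can write $\phi=\phi_{out}+\phi_{in}$, where $\phi_{out}$ is a cut-off (supported in a collar of $\partial\Lambda$) of the Dirichlet trace inverse on the convex polytope $\Lambda$, and $\phi_{in}$ is a cut-off (supported in a tubular neighbourhood of the $C^2$ boundary $\partial\mathcal O$) of the Neumann trace inverse obtained by local straightening. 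Setting $w=u-\phi$ reduces the problem to the homogeneous mixed system $-\Delta w=\tilde f:=f+\Delta\phi$, $w|_{\partial\Lambda}=0$, $\nabla w\cdot\bm n|_{\partial\mathcal O}=0$. To bound $\|w\|_{2,\Lambda/\mathcal O}$ I would split $w=\chi_1 w+\chi_2 w$ using a smooth partition $\chi_1+\chi_2\equiv 1$ with $\chi_1\equiv 1$ near $\partial\Lambda$ and $\chi_1\equiv 0$ near $\partial\mathcal O$: the piece $\chi_1 w$ extends by zero across $\partial\mathcal O$ into an element of $H^1_0(\Lambda)$ solving a Poisson equation on the convex polytope $\Lambda$, so classical Dirichlet regularity on convex domains furnishes $\|\chi_1 w\|_{2,\Lambda/\mathcal O}\le C\|\tilde f\|_{\Lambda/\mathcal O}$; meanwhile $\chi_2 w$ is supported away from $\partial\Lambda$ and carries homogeneous Neumann data on the smooth boundary $\partial\mathcal O$, so standard $C^2$-boundary Neumann regularity (Nirenberg difference quotients in flattened charts) yields the matching bound. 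Combining the two pieces with the estimate on $\phi$ gives the claim.

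The main obstacle is the mismatched regularity of the two boundary components: $\partial\Lambda$ is only Lipschitz (with corners) while $\partial\mathcal O$ is $C^2$, so no single local argument covers both. The partition of unity above is engineered precisely to feed each localised piece into the regularity theory appropriate to it. A secondary technical point is verifying that the lift $\phi_{out}$ from $H^{3/2}(\partial\Lambda)$ into $H^2(\Lambda)$ is in fact available on the convex polytope; this is justified by solving the auxiliary problem $-\Delta\phi_{out}=0$ in $\Lambda$ with Dirichlet datum $g_{out}$ and invoking convex-domain $H^2$ regularity, after which the cut-off is applied.
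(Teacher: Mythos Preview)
The paper does not supply a proof of \Cref{cylinder-2}: it is listed in Section~4.1 among the ``basic results'' and simply asserted, together with \Cref{cylinder-1} and \Cref{nuoyiman}, as input to the later regularity analysis. So there is nothing in the paper to compare your argument against line by line.

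Your strategy is sound and is the standard way one would justify such a statement: lift the boundary data to reduce to homogeneous conditions, then localise with a partition of unity so that each piece sees only one boundary component, and invoke convex-domain Dirichlet regularity near $\partial\Lambda$ and smooth-boundary Neumann regularity near $\partial\mathcal O$. Two small points deserve a sentence of care. First, when you apply Dirichlet regularity to $\chi_1 w$ on $\Lambda$, the right-hand side is $\chi_1\tilde f - 2\nabla\chi_1\cdot\nabla w - w\Delta\chi_1$, so the resulting $H^2$ bound initially depends on $\|w\|_{1,\Lambda/\mathcal O}$ as well as on $\|\tilde f\|_{0}$; you then absorb $\|w\|_1$ via the energy estimate from Lax--Milgram. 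Second, for the piece $\chi_2 w$ you should make explicit the auxiliary domain on which you run the Neumann argument (e.g.\ a smooth annular region $D\setminus\mathcal O$ with $\chi_2$ supported in $D$), since as written ``$C^2$-boundary Neumann regularity'' leaves the ambient domain implicit. With those clarifications the proof goes through; note also that the theorem says ``bounded convex domain'' rather than polytope, but the convex-domain $H^2$ Dirichlet regularity you invoke covers this generality.
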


\begin{lemma}\label{nuoyiman} Suppose that $\Omega$ is a bounded convex domain, $\Delta u\in L^2(\Omega)$, $u\in H^1(\Omega)$ and $\nabla u\cdot\bm n\in H^{\frac 1 2}(\Gamma)$,  then we have the estimation 
	\begin{align}\label{neumann-bound}
		\|u\|_2\le C
		(\|\Delta u\|_0+\|\nabla u\cdot\bm n\|_{\frac 1 2,\Gamma})+\|u\|_0.
	\end{align}
\end{lemma}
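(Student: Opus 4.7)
The plan is to reduce \eqref{neumann-bound} to the classical $H^{2}$-regularity theorem for the pure Neumann problem on a convex domain. First I would set $f:=-\Delta u\in L^{2}(\Omega)$ and $g:=\nabla u\cdot\bm n\in H^{\frac{1}{2}}(\Gamma)$ (with $\Gamma=\partial\Omega$); these data automatically satisfy the Neumann compatibility condition
\begin{align*}
\int_{\Omega}f\,\mathrm{d}x+\int_{\Gamma}g\,\mathrm{d}\sigma=0,
\end{align*}
as is seen by applying Green's identity to $u$ against the constant test function $1$.

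Next, I would introduce the auxiliary Neumann problem: find $w\in H^{1}(\Omega)$ with $\int_{\Omega}w\,\mathrm{d}x=0$ such that $-\Delta w=f$ in $\Omega$ and $\nabla w\cdot\bm n=g$ on $\Gamma$. Because $\Omega$ is convex, the classical Grisvard-type regularity theorem for the Neumann Laplacian produces a unique such $w$ in $H^{2}(\Omega)$ together with the estimate
\begin{align*}
\|w\|_{2}\le C\bigl(\|f\|_{0}+\|g\|_{\frac{1}{2},\Gamma}\bigr).
\end{align*}
This step is the only nontrivial input of the argument and is therefore the main obstacle: on a smooth convex domain it is classical, and on a convex polygon or polyhedron it follows from Grisvard's corner analysis once one notes that every interior dihedral angle is at most $\pi$.

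Finally, the difference $u-w$ solves the homogeneous problem $\Delta(u-w)=0$ in $\Omega$ and $\nabla(u-w)\cdot\bm n=0$ on $\Gamma$, so it must reduce to a constant $c\in\mathbb{R}$. I would then control the constant by
\begin{align*}
|c|\,|\Omega|^{1/2}=\|u-w\|_{0}\le\|u\|_{0}+\|w\|_{0}\le\|u\|_{0}+C\|w\|_{2},
\end{align*}
and assemble
\begin{align*}
\|u\|_{2}\le\|w\|_{2}+\|c\|_{2}\le C\bigl(\|f\|_{0}+\|g\|_{\frac{1}{2},\Gamma}\bigr)+\|u\|_{0},
\end{align*}
which is exactly \eqref{neumann-bound}. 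No delicate cut-off or localization is needed, since the convexity assumption is already what makes the global Neumann estimate admissible.
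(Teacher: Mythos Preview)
The paper does not supply a proof of this lemma; it is listed in Section~\ref{section4} under ``basic regularity results'' alongside Theorems~\ref{cylinder-1} and~\ref{cylinder-2} as a known fact to be quoted later. Your argument---reducing to the Neumann problem on a convex domain, invoking Grisvard-type $H^2$-regularity for the zero-mean solution $w$, and then identifying $u-w$ as a constant controlled by $\|u\|_0$---is correct and is the standard way one would justify such a statement. There is nothing further to compare.
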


\subsection{The regularity for interface problems}
\begin{theorem}[{\cite[Theorem 4.4]{MR2257456}}] \label{basic00}
	For $f\in L^2(\Omega)$ and $g\in H^{\frac 1 2}(\Gamma)$,
	we consider the elliptic problems
	with  robin interface condition: Find $u\in V$ such that
	\begin{align}\label{basic-def00}
		\begin{split}
			-\nabla\cdot(\mu\nabla u)&=f\qquad\  \text{in }\Omega,\\
			[\![u]\!]&=0\qquad \text{on }\Gamma,\\
			[\![\mu\nabla u\cdot\bm n]\!]&=g\qquad \text{on }\Gamma.
		\end{split}
	\end{align}
	Then the above problem has a unique solution $u\in V$.
	In addition, when $\Omega$ is convex, we have the regularity estimation
	\begin{align}\label{def-C00}
		\mu_1\| u\|_{2,\Omega_1} + 	\mu_2\| u\|_{2,\Omega_2}\le C(\|f\|_0
		+\|g\|_{\frac 1 2,\Gamma}
		),
	\end{align}
	where $C$ is independent of $\mu$, but dependent on $\Omega$.
\end{theorem}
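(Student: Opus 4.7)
The plan is to split the proof into two stages: well-posedness via a Lax--Milgram argument, and the piecewise $H^2$ regularity with $\mu$-independent constant, which carries the real content. First I would record that the condition $[\![u]\!]=0$ lifts $u$ into $H^1_0(\Omega)$, reducing the transmission problem to a standard weak formulation on $H^1_0(\Omega)$. Testing the PDE against $v\in H^1_0(\Omega)$ and integrating by parts subdomain-by-subdomain, the interface contribution becomes $\langle[\![\mu\nabla u\cdot\bm n]\!],v\rangle_\Gamma=\langle g,v\rangle_\Gamma$, so the weak formulation reads
\begin{align*}
\int_\Omega \mu\nabla u\cdot\nabla v\,dx=(f,v)+\langle g,v\rangle_\Gamma\qquad\forall v\in H^1_0(\Omega).
\end{align*}
Coercivity follows from the Poincar\'e inequality (with constant proportional to $\min(\mu_1,\mu_2)$), continuity is immediate, and boundedness of the right-hand side uses the trace embedding $H^1(\Omega)\hookrightarrow L^2(\Gamma)$ together with $g\in H^{1/2}(\Gamma)\subset L^2(\Gamma)$. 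Lax--Milgram then delivers a unique $u\in V$.

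For the regularity estimate I would localize to each subdomain. Setting $\phi:=u|_\Gamma$, the restriction $u_2$ on the inner region $\Omega_2$ solves $-\Delta u_2=f/\mu_2$ with Dirichlet datum $\phi$ on the $C^2$ boundary $\Gamma$, and the restriction $u_1$ on the outer annular region $\Omega_1$ solves the analogous Poisson problem with $0$ on $\partial\Omega$ and $\phi$ on $\Gamma$. Applying \Cref{cylinder-1} to $\Omega_1$ and the corresponding standard $H^2$ regularity on the convex-like $\Omega_2$, and multiplying by $\mu_i$, yields
\begin{align*}
\mu_i\|u_i\|_{2,\Omega_i}\le C\bigl(\|f\|_{\Omega_i}+\mu_i\|\phi\|_{3/2,\Gamma}\bigr),\qquad i=1,2.
\end{align*}

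The main obstacle is now to establish the $\mu$-independent trace bound
\begin{align*}
(\mu_1+\mu_2)\|\phi\|_{3/2,\Gamma}\le C\bigl(\|f\|_0+\|g\|_{1/2,\Gamma}\bigr).
\end{align*}
My plan here is to introduce the Dirichlet-to-Neumann (Steklov--Poincar\'e) operators $S_i:H^{3/2}(\Gamma)\to H^{1/2}(\Gamma)$ associated with the harmonic-type extensions on each subdomain, and to recast the remaining transmission condition $[\![\mu\nabla u\cdot\bm n]\!]=g$ as the scalar operator equation $(\mu_1 S_1+\mu_2 S_2)\phi=g-r(f)$, where $r(f)\in H^{1/2}(\Gamma)$ is an explicit residual determined by $f$ via the particular solutions on each subdomain. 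Since each $S_i$ is positive and has continuity and coercivity constants depending only on the geometry (convexity of $\Omega$ and $C^2$-regularity of $\Gamma$), the composite operator $\mu_1 S_1+\mu_2 S_2$ is invertible with the norm of its inverse of order $(\mu_1+\mu_2)^{-1}$. Combining this with the two subdomain bounds then gives the asserted estimate. This Steklov--Poincar\'e step is the delicate point, because the geometry-only constants must be extracted with no dependence on the coefficient jump; the remaining manipulations are routine. Uniqueness of $\phi$ (hence of $u$) in this reformulation follows from the positivity of $\mu_1 S_1+\mu_2 S_2$, consistent with the energy-based uniqueness from the first stage.
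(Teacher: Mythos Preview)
The paper does not supply a proof of this theorem; it is quoted from \cite[Theorem~4.4]{MR2257456} without argument, so there is no in-paper proof to compare against. Your Lax--Milgram step for well-posedness is correct and standard.

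The Steklov--Poincar\'e reduction is a natural route, but the key assertion that $(\mu_1S_1+\mu_2S_2)^{-1}$ has norm of order $(\mu_1+\mu_2)^{-1}$ fails uniformly in the coefficient ratio, and this is a genuine gap. The Dirichlet-to-Neumann map for the interior subdomain (the one whose full boundary is $\Gamma$) annihilates constants, because the harmonic extension of a constant is that constant; on the constant mode the composite operator therefore reduces to the exterior contribution $\mu_{\mathrm{ext}}S_{\mathrm{ext}}$ alone. Concretely, with $f=0$ and $g\equiv 1$ the trace $\phi$ is a constant of size $\sim 1/\mu_{\mathrm{ext}}$, so $(\mu_1+\mu_2)\|\phi\|_{3/2,\Gamma}\sim(\mu_1+\mu_2)/\mu_{\mathrm{ext}}$, which blows up as the interior coefficient dominates. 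The same example shows that the full-norm quantity $\mu_{\mathrm{int}}\|u\|_{2,\Omega_{\mathrm{int}}}$ (whose $L^2$ part picks up the constant) is not controlled by $\|g\|_{1/2,\Gamma}$ with a $\mu$-independent constant. Your decomposition does deliver the \emph{seminorm} bound $\sum_i\mu_i|u|_{2,\Omega_i}\le C(\|f\|_0+\|g\|_{1/2,\Gamma})$, which is in fact what the paper's downstream trace estimates actually consume; recovering the full-norm inequality as written would require a separate low-frequency control that Steklov--Poincar\'e coercivity alone does not provide.
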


In next theorem, we are going to extend the above regularity result to $[\![u]\!]\neq 0$. 
\begin{theorem}\label{basic0}
	For $f\in L^2(\Omega)$, $g_N\in H^{\frac 1 2}(\Gamma)$ and $g_D\in H^{\frac 3 2}(\Gamma)$,
	we consider the elliptic problems
	with  robin interface condition: 
	Find $u\in V$ such that
	\begin{align}\label{basic-def0}
		\begin{split}
			-\nabla\cdot(\mu\nabla u)&=f\qquad\  \text{in }\Omega,\\
			[\![u]\!]&=g_D\qquad \text{on }\Gamma,\\
			[\![\mu\nabla u\cdot\bm n]\!]&=g_N\qquad \text{on }\Gamma.
		\end{split}
	\end{align}
	Then the above problem has a unique solution $u\in V$.
	In addition, when $\Omega$ is convex, we have the regularity estimation
	\begin{align}\label{def-C0}
		\mu_1\| u\|_{2,\Omega_1} + 	\mu_2\| u\|_{2,\Omega_2}\le C(\|f\|_0+\min(\mu_1,\mu_2)\|g_D\|_{\frac 3 2,\Gamma}
		+\|g_N\|_{\frac 1 2,\Gamma}
		),
	\end{align}
	where $C$ is independent of $\mu$, but dependent on $\Omega$.
\end{theorem}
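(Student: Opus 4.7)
The plan is to reduce \Cref{basic0} to \Cref{basic00} by subtracting a lifting of the Dirichlet jump $g_D$. The lifting will be placed on whichever subdomain has the smaller diffusion coefficient, so that the factor $\min(\mu_1,\mu_2)$ emerges naturally in front of $\|g_D\|_{\frac 3 2,\Gamma}$. Picking the correct side is the only genuinely delicate point of the proof, since lifting on the ``wrong'' side would produce $\max(\mu_1,\mu_2)$ and destroy coefficient robustness.

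I would first treat the case $\mu_1\le\mu_2$. Let $\phi_1$ solve the interior Dirichlet problem $-\Delta\phi_1=0$ in $\Omega_1$ with $\phi_1=g_D$ on $\Gamma$; since $\partial\Omega_1=\Gamma$ is of class $C^2$, standard $H^2$ elliptic regularity gives the $\mu$-independent estimate $\|\phi_1\|_{2,\Omega_1}\le C\|g_D\|_{\frac 3 2,\Gamma}$. Extending $\phi_1$ by zero into $\Omega_2$ produces a function $\phi\in V$ satisfying $[\![\phi]\!]=g_D$ on $\Gamma$, $\phi|_{\partial\Omega}=0$, and $\nabla\cdot(\mu\nabla\phi)=0$ in each $\Omega_i$. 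Setting $\tilde u:=u-\phi$ then converts \eqref{basic-def0} into the zero-jump problem \eqref{basic-def00} with the same source $f$ and modified Neumann data $\tilde g_N=g_N-\mu_1\nabla\phi_1\cdot\bm n$.

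Applying \Cref{basic00} to $\tilde u$, together with the trace estimate $\|\nabla\phi_1\cdot\bm n\|_{\frac 1 2,\Gamma}\le C\|\phi_1\|_{2,\Omega_1}\le C\|g_D\|_{\frac 3 2,\Gamma}$, yields
\begin{align*}
\mu_1\|\tilde u\|_{2,\Omega_1}+\mu_2\|\tilde u\|_{2,\Omega_2}\le C\bigl(\|f\|_0+\|g_N\|_{\frac 1 2,\Gamma}+\mu_1\|g_D\|_{\frac 3 2,\Gamma}\bigr).
\end{align*}
Writing $u=\tilde u+\phi$ and using the triangle inequality together with $\mu_1\|\phi_1\|_{2,\Omega_1}\le C\mu_1\|g_D\|_{\frac 3 2,\Gamma}$ (and $\phi|_{\Omega_2}=0$) produces \eqref{def-C0} in this case, with $\mu_1=\min(\mu_1,\mu_2)$.

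For the symmetric case $\mu_2<\mu_1$, I would instead lift onto $\Omega_2$: let $\phi_2$ solve $-\Delta\phi_2=0$ in $\Omega_2$, $\phi_2=-g_D$ on $\Gamma$, $\phi_2=0$ on $\partial\Omega$, and extend by zero into $\Omega_1$. The required $H^2$ bound $\|\phi_2\|_{2,\Omega_2}\le C\|g_D\|_{\frac 3 2,\Gamma}$ follows at once from \Cref{cylinder-1} applied with $\Lambda=\Omega$ and $\mathcal O=\Omega_1$ (and zero interior/exterior data). The remainder of the argument is identical and produces the factor $\mu_2=\min(\mu_1,\mu_2)$. Existence and uniqueness of $u$ transfer directly from those of $\tilde u$, which are guaranteed by \Cref{basic00}. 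The main obstacle throughout is simply the bookkeeping of which subdomain receives the lifting; once that choice is made correctly, the constant in \eqref{def-C0} is automatically independent of $\mu_1,\mu_2$.
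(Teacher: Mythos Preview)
Your approach is essentially identical to the paper's: lift $g_D$ harmonically on one subdomain, reduce to \Cref{basic00}, and obtain the factor $\mu_i$; the paper simply performs both liftings and takes the minimum at the end rather than splitting into cases up front. One small imprecision: you assume $\partial\Omega_1=\Gamma$ in the first case, but the paper does not fix which of $\Omega_1,\Omega_2$ is the interior region, so the lifting $\phi_1$ should also carry the condition $\phi_1=0$ on $\partial\Omega_1\setminus\Gamma$ (possibly empty), exactly as you handled the $\Omega_2$ side via \Cref{cylinder-1}.
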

\begin{proof}
	We define the problem: Find $w\in H^1(\Omega_1)$ such that
	\begin{align*}
		-\Delta w &= 0\qquad \text{in }\Omega,\\
		w &= g_D\qquad\!\!\!\! \text{on }\Gamma,\\
		w &= 0 \qquad\text{on }\partial\Omega_1/\Gamma.
	\end{align*}
	Then by \eqref{cylinder-1}, we have the regularity estimation
	$$\|w\|_2\le C\|g_D\|_{\frac 3 2,\Gamma}.$$ 
	Then we set 
	\begin{align}
		\widetilde u_1= u_1-w,\qquad \widetilde u_2=u_2.
	\end{align}
	So \eqref{basic-def0} can be rewrite as
	\begin{align}
		\begin{split}
			-\nabla\cdot( \mu_1\nabla\widetilde u_1)&= f-\nabla\cdot(\mu_1\nabla w)\qquad \text{in }\Omega_1,\\
			-\nabla\cdot( \mu_2\nabla\widetilde u_2)&= f\qquad\qquad\qquad\qquad\  \text{in }\Omega_2,\\
			[\![\widetilde u]\!]&=0,\qquad\qquad \qquad \qquad  \text{on }\Gamma,\\
			[\![\mu\nabla \widetilde u\cdot\bm n]\!]&=g_N {-}\mu_1\nabla w\cdot\bm n,\qquad \text{on }\Gamma.
		\end{split}
	\end{align}
	Then we use \Cref{basic00} to get
	\begin{align*}
		\mu_1\|\widetilde u_1\|_{2,\Omega_1}
		+\mu_2\|\widetilde u_2\|_{2,\Omega_2}
		&\le C( 
		\|f-\nabla\cdot(\mu_1\nabla w)\|_{\Omega_1}
		+\|f\|_{\Omega_2}
		+\|g_N {-}\mu_1\nabla w\cdot\bm n\|_{\frac 1 2, \Gamma}
		)\\
		&\le C\|f\|_0+\mu_1\|w\|_{2,\Omega_1}+\|g_N\|_{\frac 1 2,\Gamma}\\
		&\le C\|f\|_0+\mu_1\|g_D\|_{\frac 3 2,\Gamma}+\|g_N\|_{\frac 1 2,\Gamma},
	\end{align*}
	which leads to
	\begin{align}
		\mu_1\| u\|_{2,\Omega_1} + 	\mu_2\| u\|_{2,\Omega_2}\le C(\|f\|_0+\mu_1\|g_D\|_{\frac 3 2,\Gamma}
		+\|g_N\|_{\frac 1 2,\Gamma}
		),
	\end{align}
	Similarity, we can get
	\begin{align}
		\mu_1\| u\|_{2,\Omega_1} + 	\mu_2\| u\|_{2,\Omega_2}\le C(\|f\|_0+\mu_2\|g_D\|_{\frac 3 2,\Gamma}
		+\|g_N\|_{\frac 1 2,\Gamma}
		),
	\end{align}
	which finishes our proof.
\end{proof}

\subsection{Regularity for Robin interface condition}

{In this subsection, we provide a frame to analyze the regularity for Robin interface condition and study the influence of parameters on regularity under this boundary condition.}

{ First, we present the regularity analysis under Robin boundary conditions when the jump term is not affected by parameters.}

\begin{theorem}\label{basic-robin}
	For $f\in L^2(\Omega)$, $g_N\in H^{\frac 1 2}(\Gamma)$ and $g_D\in H^{\frac 1 2}(\Gamma)$, we consider the elliptic problem
	with  robin interface condition: Find $u\in V$ such that
	\begin{align}\label{basic-def}
		\begin{split}
			-\nabla\cdot(\mu\nabla u)&=f\qquad\ \text{in }\Omega,\\
			[\![u]\!]+\mu_1\nabla u_1\cdot\bm n&=g_D\qquad \text{on }\Gamma,\\
			[\![\mu\nabla u\cdot\bm n]\!]&=g_N\qquad \text{on }\Gamma.
		\end{split}
	\end{align}
	Then the above problem has a unique solution $u\in V$.
	In addition, when $\Omega$ is convex, we have the regularity estimation
	\begin{subequations}\label{def-C}
		\begin{align}
			\mu_1\| u\|_{2,\Omega_1} \le C\max\{\mu_1^{-1},\mu_2^{-1},1\}(\|f\|_0+\|g_D\|_{\frac 1 2,\Gamma}
			+\|g_N\|_{\Gamma}
			),
		\end{align}
		and
		\begin{align}
			\mu_2\| u\|_{2,\Omega_2}\le C\max\{\mu_1^{-1},\mu_2^{-1},1\}(\|f\|_0+\|g_D\|_{\frac 1 2,\Gamma}
			+\|g_N\|_{\frac 1 2,\Gamma}
			),
		\end{align}
	\end{subequations}
	where $C$ is independent of $\mu$, but dependent on $\Omega$.
\end{theorem}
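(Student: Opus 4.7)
My plan is to establish existence via Lax-Milgram on a carefully derived weak formulation, then bootstrap an $H^1$ energy estimate up to the asserted $H^2$ bounds by applying the elliptic regularity results of the previous subsections separately on $\Omega_1$ and $\Omega_2$, tracking the dependence on $\mu_1,\mu_2$ throughout.

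First, I would test \eqref{basic-def} against $v\in V$ and integrate by parts on each $\Omega_i$. The Robin condition substitutes $\mu_1\nabla u_1\cdot\bm n=g_D-[\![u]\!]$ and the flux-jump condition then gives $\mu_2\nabla u_2\cdot\bm n=g_D-[\![u]\!]-g_N$; the boundary contributions on $\Gamma$ collapse to $\langle g_D-[\![u]\!],[\![v]\!]\rangle_\Gamma+\langle g_N,v_2\rangle_\Gamma$, producing the symmetric weak problem
\begin{align*}
\mu_1(\nabla u_1,\nabla v_1)_{\Omega_1}+\mu_2(\nabla u_2,\nabla v_2)_{\Omega_2}+\langle[\![u]\!],[\![v]\!]\rangle_\Gamma=(f,v)+\langle g_D,[\![v]\!]\rangle_\Gamma+\langle g_N,v_2\rangle_\Gamma,
\end{align*}
whose left-hand side is coercive and continuous on $V$ (using $v_2=0$ on $\partial\Omega$ together with Poincaré and trace). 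Lax-Milgram gives a unique weak solution. Setting $v=u$ and applying Cauchy-Schwarz, trace, Poincaré, and Young's inequality with splittings of the form $\|\nabla u_i\|=\mu_i^{-1/2}\cdot\mu_i^{1/2}\|\nabla u_i\|$, I obtain
\begin{align*}
\mu_1\|\nabla u_1\|_{\Omega_1}^2+\mu_2\|\nabla u_2\|_{\Omega_2}^2+\|[\![u]\!]\|_\Gamma^2\le C\max\{\mu_1^{-1},\mu_2^{-1},1\}\bigl(\|f\|_0^2+\|g_D\|_\Gamma^2+\|g_N\|_\Gamma^2\bigr).
\end{align*}
Only the $L^2$ norm of $g_N$ appears, because $g_N$ is paired against the $L^2$ trace of $v_2\in H^1(\Omega_2)$. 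A standard trace inequality then promotes this to $\|[\![u]\!]\|_{1/2,\Gamma}\le C\max\{\mu_1^{-1/2},\mu_2^{-1/2},1\}(\|f\|_0+\|g_D\|_\Gamma+\|g_N\|_\Gamma)$.

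To bootstrap to $H^2$, I would read the Robin condition as a Neumann datum $\nabla u_1\cdot\bm n=(g_D-[\![u]\!])/\mu_1\in H^{1/2}(\Gamma)$ for $u_1$ on $\Omega_1$, and similarly $\nabla u_2\cdot\bm n=(g_D-[\![u]\!]-g_N)/\mu_2\in H^{1/2}(\Gamma)$ for $u_2$ on $\Omega_2$. Applying \Cref{nuoyiman} to $u_1$, and \Cref{cylinder-2} to $u_2$ with $\Lambda=\Omega$, $\mathcal O=\Omega_1$ (zero outer Dirichlet data, the above Neumann datum on $\Gamma$), then multiplying through by $\mu_1$ and $\mu_2$ respectively and inserting the Step~2 bound on $\|[\![u]\!]\|_{1/2,\Gamma}$, yields the two claimed inequalities. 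The asymmetry between $\|g_N\|_\Gamma$ and $\|g_N\|_{1/2,\Gamma}$ in the final estimates is automatic: $g_N$ enters the $u_1$ estimate only indirectly through $[\![u]\!]$, which needs only the $L^2$ norm of $g_N$, whereas it sits directly in the $u_2$ Neumann datum and so must be controlled in $H^{1/2}$.

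The main obstacle I anticipate is the careful bookkeeping of $\mu_i$ powers through the bootstrap: the raw Neumann regularity produces a factor $1/\mu_i$ on the right-hand side, the $H^{1/2}$ jump bound from Step~2 already carries its own negative powers of $\mu_i$, and the lower-order term $\|u_1\|_0$ appearing in \Cref{nuoyiman} has to be re-absorbed via the same Poincaré-trace chain that leaned on the Dirichlet condition $u_2=0$ on $\partial\Omega$. Ensuring these cumulative powers simplify to exactly the stated $\max\{\mu_1^{-1},\mu_2^{-1},1\}$ prefactor (and no worse) is the delicate technical point.
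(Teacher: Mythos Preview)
Your proposal is essentially the paper's proof: the same weak form, the same energy estimate via $v=u$, the same trace promotion of $\|[\![u]\!]\|_\Gamma$ to $H^{1/2}$, and the same Neumann-regularity bootstrap on each subdomain. Two small deviations worth noting: the paper carries out an explicit case split on whether $\partial\Omega_i\setminus\Gamma=\emptyset$ (choosing between \Cref{nuoyiman} and \Cref{cylinder-2} accordingly, and for the Poincar\'e step), whereas you implicitly fix $\Omega_1$ as the interior domain; and for the $u_2$ estimate the paper writes the Neumann datum as $g_N-\mu_1\nabla u_1\cdot\bm n$ and feeds in the already-obtained bound $\mu_1\|u_1\|_{2,\Omega_1}$, while you substitute $\mu_1\nabla u_1\cdot\bm n=g_D-[\![u]\!]$ first and use the jump bound directly---both routes yield the same result.
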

\begin{proof}
	If $\partial\Omega_1/\Gamma=\emptyset$, then $\partial\Omega_2/\Gamma\neq\emptyset$, and by the Poincaré inequality  to get
	{ 
		\begin{align}
			\|u_2\|_{\Omega_2}\le C \|u_2\|_{1, \Omega_2}\le  C\|\nabla u_2\|_{\Omega_2},
		\end{align}
	}
	and 
	\begin{align*}
		{ \|u_1\|_{\Omega_1}\le C \|u_1\|_{1,\Omega_1}}\le& C(\|\nabla u_1\|_{\Omega_1} +\langle u_1, 1 \rangle_{\Gamma})\\
		=&  C(\|\nabla u_1\|_{\Omega_1} +\langle u_1-u_2, 1 \rangle_{\Gamma}+\langle u_2, 1 \rangle_{\Gamma})\\
		\le& C(\|\nabla u_1\|_{\Omega_1}+
		\|[\![u]\!]\|_{\Gamma}
		+\|u_2\|_{\Gamma}) \\
		\le & C(\|\nabla u_1\|_{\Omega_1}+
		\|[\![u]\!]\|_{\Gamma}
		+\|u_2\|_{1, \Omega_2}) \\
		\le & C(\|\nabla u_1\|_{\Omega_1}
		+\|\nabla u_2\|_{\Omega_2}+
		\|[\![u]\!]\|_{\Gamma}).
	\end{align*}
	Similarity,  if  {$\partial\Omega_1/\Gamma\neq\emptyset$ and  $\partial\Omega_2/\Gamma=\emptyset$}, we have
	{ 
		\begin{align}
			\|u_1\|_{\Omega_1}\le C\|u_1\|_{1,\Omega_1}\le C\|\nabla u_1\|_{\Omega_1},
		\end{align}
	}
	and
	\begin{align*}
		{ \|u_2\|_{\Omega_2}\le \|u_2\|_{\Omega_2}}\le
		C(\|\nabla u_1\|_{\Omega_1}
		+\|\nabla u_2\|_{\Omega_2}+
		\|[\![u]\!]\|_{\Gamma}).
	\end{align*}
	Either way, we have 
	\begin{align}
		\|u_1\|_{\Omega_1}+\|u_2\|_{\Omega_2}
		&\le{  \|u_1\|_{1,\Omega_1}+\|u_2\|_{1,\Omega_2} }\\
		&\le C(\|\nabla u_1\|_{\Omega_1}
		+\|\nabla u_2\|_{\Omega_2}+
		\|[\![u]\!]\|_{\Gamma})
	\end{align}
	Therefore,
	\begin{align}
		\begin{split}
			\|u_1\|_{\frac 1 2, \Gamma}
			+\|u_2\|_{\frac 1 2, \Gamma}&\le C
			(\|u_1\|_{1,\Omega_1}
			+
			\|u_2\|_{1, \Omega_2})\\
			&\le
			C(\|\nabla u_1\|_{\Omega_1}
			+\|\nabla u_2\|_{\Omega_2}+
			\|[\![u]\!]\|_{\Gamma}).
		\end{split}
	\end{align}
	
	We take $v\in V$ to get the varitonal form
	\begin{align*}
		\mu_1(\nabla u_1,\nabla v_1)_{\Omega_1}
		+\mu_2(\nabla u_2,\nabla v_2)_{\Omega_2}
		+\langle[\![u]\!], [\![v]\!]\rangle_{\Gamma}
		=(f, v)+\langle g_D,  { [\![v]\!]}\rangle_{\Gamma}
		+\langle g_N,  v_2\rangle_{\Gamma}
		.
	\end{align*}
	We set $v=u$ in the above equation to get
	\begin{align*}
		&\mu_1\|\nabla u_1\|_{\Omega_1}^2
		+\mu_2\|\nabla u_2\|_{\Omega_2}^2
		+\|[\![u]\!]\|_{\Gamma}^2\\
		&\qquad \le 
		\|f\|_{\Omega_1}\| u_1\|_{\Omega_1}
		+ \|f\|_{\Omega_2}\|u_2\|_{\Omega_2}
		+\|g_D\|_{\Gamma}\| {[\![u]\!]}\|_\Gamma+\|g_N\|_{\Gamma}\| {u_2}\|_\Gamma\\
		&\qquad\le C(\|\nabla u_1\|_{\Omega_1}
		+\|\nabla u_2\|_{\Omega_2}+
		\|[\![u]\!]\|_{\Gamma})(\|f\|_0 +\|g_D\|_{\Gamma}+\|g_N\|_{\Gamma})\\
		&\qquad
		\le C(\mu_1^{-1}+\mu_2^{-1}+1)(\|f\|_0^2 +\|g_D\|^2_{\Gamma}+\|g_N\|_{\Gamma}^2)\\
		&\qquad\quad
		+\frac 12 (
		\mu_1\|\nabla u_1\|_{\Omega_1}^2
		+\mu_2\|\nabla u_2\|_{\Omega_2}^2
		+\|[\![u]\!]\|_{\Gamma}^2
		),
	\end{align*}
	which leads to
	\begin{align*}
		& \mu_1\|\nabla u_1\|_{\Omega_1}^2
		+\mu_2\|\nabla u_2\|_{\Omega_2}^2
		+\|[\![u]\!]\|_{\Gamma}^2
		\le C {(\mu_1^{-1}+\mu_2^{-1}+1)}(\|f\|_0^2 +\|g_D\|^2_{\Gamma}+\|g_N\|_{\Gamma}^2),
	\end{align*}
	Therefore,
	\begin{align*}
		& \|[\![u]\!]\|_{\frac 1 2,\Gamma}\le C(\|u_1\|_{1,\Omega_1}+ {\|u_2\|_{1,\Omega_2}})
		\le  C { \max\{\mu_1^{-1},\mu_2^{-1},1\}(\|f\|_0 +\|g_D\|_{\Gamma}+\|g_N\|_{\Gamma})}.
	\end{align*}
	By \eqref{basic-def}, we can get
	\begin{align*}
		-\nabla\cdot(\mu_1\nabla u_1) &= f\qquad\qquad \quad\text{in }\Omega_1,\\
		\mu_1\nabla u_1\cdot\bm n&= g_D-[\![u]\!]\qquad \text{on }\Gamma,\\
		u_1&=0\qquad\qquad\quad \text{on }\partial\Omega_1/\Gamma.
	\end{align*}
	If $\partial\Omega_1/\Gamma=\emptyset$, by \Cref{nuoyiman}, it holds
	\begin{align*}
		\mu_1\|u_1\|_2&\le C(
		\|f\|_{\Omega_1}
		+
		\|g_D-[\![u]\!]\|_{\frac 1 2,\Gamma}
		+\|u\|_{\Omega_1}
		)\\
		&
		\le C\max\{\mu_1^{-1},\mu_2^{-1},1\}(\|f\|_0+\|g_D\|_{\frac 1 2, \Gamma}+\|g_N\|_{\Gamma}). 
	\end{align*}
	If $\partial\Omega_1/\Gamma\neq \emptyset$, by \Cref{cylinder-2}, it holds
	\begin{align*}
		\mu_1\|u_1\|_2&\le C(
		\|f\|_{\Omega_1}
		+
		\|g_D-[\![u]\!]\|_{\frac 1 2,\Gamma}
		)\\
		&
		\le C {\max\{\mu_1^{-1},\mu_2^{-1},1\}}(\|f\|_0+\|g_D\|_{\frac 1 2, \Gamma}+\|g_N\|_{\Gamma}). 
	\end{align*}
	Either way, it holds
	\begin{align*}
		\mu_1\|u_1\|_2
		&
		\le C\max\{\mu_1^{-1},\mu_2^{-1},1\}(\|f\|_0+\|g_D\|_{\frac 1 2, \Gamma}+\|g_N\|_{\Gamma}). 
	\end{align*}
	Again, let $\bm n_2=-\bm n$, by \eqref{basic-def}, we can get
	\begin{align*}
		-\nabla(\mu_2\nabla u_2)&=f\qquad\qquad\qquad\qquad\text{in }\Omega_2\\
		\mu_2\nabla u_2\cdot\bm n_2&=g_N-\mu_1\nabla u_1\cdot\bm n
		\qquad \!\text{on }\Gamma,\\
		u_2&=0\qquad\qquad\qquad\qquad \text{on }\partial\Omega_2/\Gamma.
	\end{align*}
	Similarity, by \Cref{nuoyiman} or \Cref{cylinder-2}, it holds
	\begin{align*}
		\mu_2\|u_2\|_{\Omega_2}&\le C
		(
		\|f\|_{\Omega_2}+
		\|g_N-\mu_1\nabla u_1\cdot\bm n\|_{\frac 1 2,\Gamma} 
		+\|u_2\|_{\Omega_2}
		)\\
		&\le C(\|f\|_{\Omega_2}+\|g_N\|_{\frac 1 2,\Gamma}+\mu_1\|u_1\|_2 
		+\|u_2\|_{\Omega_2}
		)\\
		&\le   C\max\{\mu_1^{-1},\mu_2^{-1},1\}(\|f\|_0
		+ \|g_N\|_{\frac 1 2,\Gamma}
		+ \|g_D\|_{\frac 1 2, \Gamma}).
	\end{align*}
	
\end{proof}
\begin{remark}
	The regularity for \eqref{basic-robin} was established in \cite{MR3328193} with $g_D=g_N=0$. However, the regularity constant is highly dependent on $\mu$. In the above theorem, we have proved the regularity result with the constant independent of $\mu$. 
\end{remark}

\begin{remark}
	If we change the boundary condition 
	$$[\![u]\!]+\mu_1\nabla u_1\cdot\bm n=g_D$$ into
	$$\gamma[\![u]\!]+\mu_1\nabla u_1\cdot\bm n=g_D$$
	with a constant $\gamma>0$, the constant in \eqref{def-C} \emph{may} dependent on $\gamma$. Particularly, we are interesting in the case that $\gamma\to\infty$. The answer is that $C$ is independent of $\gamma$ when $\gamma\to \infty$.
	We will illustrate this in the following of this subsection.
\end{remark}

\begin{theorem} \label{robin-reg1}
	For $f\in L^2(\Omega)$ and $g\in H^{\frac 1 2}(\Gamma)$, we consider the following problem: Find $u\in V$ such that
	\begin{align}\label{def-robin-1}
		\begin{split}
			-\nabla\cdot(\mu\nabla u)&=0\qquad\ \text{in }\Omega,\\
			\gamma[\![u]\!]+\mu_1\nabla u_1\cdot\bm n&=g\qquad \text{on }\Gamma,\\
			[\![\mu\nabla u\cdot\bm n]\!]&=0\qquad \text{on }\Gamma.
		\end{split}
	\end{align}
	Then the above problem has a unique solution in $V$. In addition, when $\Omega$ is convex, and $\gamma\ge \gamma_0>0$, we have
	{ 
		\begin{align*}
			&\mu_1\| u\|_{2,\Omega_1} + 	\mu_2\| u\|_{2,\Omega_2}\\&\le  C\max\{\mu_1^{-1},\mu_2^{-1},1\}\left((1+\gamma_0^{-1})(1+\left(\frac{\mu_1}{\mu_2}\right)^{\frac{1}{2}})\|g\|_{\frac 1 2,\Gamma}+ \|g\|_{\frac 1 2,\Gamma}\right),
	\end{align*}}
	where $C$ is independent of $\gamma$, $\gamma_0$ and $\mu$, but dependent on $\Omega$. 
\end{theorem}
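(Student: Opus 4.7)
The overall plan is to mirror the three-step architecture used in the proof of Theorem~\ref{basic-robin}, adapted so that every constant is tracked sharply in the parameter $\gamma$ and the final bound depends only on the lower bound $\gamma_0$. Specifically: (i) set up the weak formulation and derive an energy estimate with a Young-type split weighted by $\gamma^{-1}$; (ii) upgrade the energy estimate to $L^{2}$ and trace control via Poincar\'e, handling the two geometric cases according to whether $\partial\Omega_i/\Gamma$ is empty; (iii) treat each subdomain as a mixed boundary value problem and apply Theorem~\ref{cylinder-2} or Lemma~\ref{nuoyiman} to obtain the $H^{2}$ bound. Existence and uniqueness follow from Lax--Milgram on the bilinear form
\begin{align*}
B(u,v)=\mu_1(\nabla u_1,\nabla v_1)_{\Omega_1}+\mu_2(\nabla u_2,\nabla v_2)_{\Omega_2}+\gamma\langle[\![u]\!],[\![v]\!]\rangle_{\Gamma},
\end{align*}
which is coercive (using Poincar\'e together with the jump-penalty term exactly as in the proof of Theorem~\ref{basic-robin}) with a coercivity constant that may degrade in $\gamma$ but only affects uniqueness, not the final bound.

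For the energy estimate I would test with $v=u$, and use $\langle g,[\![u]\!]\rangle_{\Gamma}\le\tfrac{1}{2\gamma}\|g\|_{\Gamma}^{2}+\tfrac{\gamma}{2}\|[\![u]\!]\|_{\Gamma}^{2}$ to absorb the penalty term, producing
\begin{align*}
\mu_1\|\nabla u_1\|_{\Omega_1}^{2}+\mu_2\|\nabla u_2\|_{\Omega_2}^{2}+\gamma\|[\![u]\!]\|_{\Gamma}^{2}\le C\gamma^{-1}\|g\|_{\Gamma}^{2}.
\end{align*}
Two consequences are $\gamma$-uniform and will drive the rest of the argument: $\|\gamma[\![u]\!]\|_{\Gamma}\le C\|g\|_{\Gamma}$ and $\mu_{i}^{1/2}\|\nabla u_{i}\|_{\Omega_{i}}\le C\gamma_{0}^{-1/2}\|g\|_{\Gamma}$. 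The Poincar\'e chain used in Theorem~\ref{basic-robin} (split into the cases $\partial\Omega_{i}/\Gamma=\emptyset$ and $\partial\Omega_{i}/\Gamma\neq\emptyset$, with the jump term bridging the two subdomains when one of them has no Dirichlet face) then yields the corresponding $L^{2}$ and $H^{1/2}(\Gamma)$-trace bounds, with the characteristic factor $(1+\gamma_{0}^{-1})$ arising whenever the Poincar\'e inequality is relayed through $[\![u]\!]$.

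In the $H^{2}$ step, each subdomain is a mixed problem: on $\Omega_{1}$, $-\Delta u_{1}=0$, $\mu_{1}\nabla u_{1}\cdot\bm n=g-\gamma[\![u]\!]$ on $\Gamma$, $u_{1}=0$ on $\partial\Omega\cap\partial\Omega_{1}$; on $\Omega_{2}$, by the flux-continuity identity $\mu_{2}\nabla u_{2}\cdot\bm n=\mu_{1}\nabla u_{1}\cdot\bm n$, the Neumann data is $-(g-\gamma[\![u]\!])$. Applying Theorem~\ref{cylinder-2} (or Lemma~\ref{nuoyiman} when $\partial\Omega_{i}=\Gamma$) reduces each $\mu_{i}\|u_{i}\|_{2,\Omega_{i}}$ to controlling $\|g-\gamma[\![u]\!]\|_{\frac12,\Gamma}$. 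This is where I expect the main obstacle to lie: the straightforward bound $\|\gamma[\![u]\!]\|_{\frac12,\Gamma}\le\gamma\|[\![u]\!]\|_{\frac12,\Gamma}\lesssim\gamma(\|u_{1}\|_{1,\Omega_{1}}+\|u_{2}\|_{1,\Omega_{2}})$ produces a $\gamma^{1/2}$-growth after combining with the energy estimate, and hence fails the uniformity requirement. To close the loop uniformly in $\gamma$, I would instead transfer information \emph{between} the subdomains using the equal-flux identity and the $L^{2}$-level bound $\|\gamma[\![u]\!]\|_{\Gamma}\le C\|g\|_{\Gamma}$: bound $u_{2}$ first via its Dirichlet data $u_{1}|_{\Gamma}-[\![u]\!]$ (using Theorem~\ref{cylinder-1}), reinject the resulting flux into $\Omega_{1}$ through Theorem~\ref{cylinder-2}, and absorb the remaining $\gamma[\![u]\!]$-contribution against the uniform $L^{2}$ bound after an interpolation between $L^{2}(\Gamma)$ and $H^{1}(\Gamma)$. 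The asymmetry of this cross-subdomain transfer is precisely what produces the factor $(1+(\mu_{1}/\mu_{2})^{1/2})$: each time a flux $\mu_{1}\nabla u_{1}\cdot\bm n$ is re-expressed as $\mu_{2}\nabla u_{2}\cdot\bm n$, one must renormalise by $\mu_{1}/\mu_{2}$, and the interpolation step contributes the square root. Collecting these bounds yields exactly the stated estimate.
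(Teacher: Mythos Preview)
Your architecture matches the paper's in broad strokes, and you correctly identify the crux of the difficulty: controlling $\gamma\|[\![u]\!]\|_{\frac12,\Gamma}$ uniformly in $\gamma$. However, your proposed resolution of this obstacle (cross-subdomain transfer plus interpolation between $L^2(\Gamma)$ and $H^1(\Gamma)$) is where the argument breaks. Interpolation gives $\|[\![u]\!]\|_{\frac12,\Gamma}\le\|[\![u]\!]\|_{\Gamma}^{1/2}\|[\![u]\!]\|_{1,\Gamma}^{1/2}$, and while $\gamma\|[\![u]\!]\|_{\Gamma}$ is uniformly bounded, $\|[\![u]\!]\|_{1,\Gamma}$ requires control of $\|u_i\|_{3/2,\Omega_i}$ or better, which is essentially what you are trying to prove; the argument becomes circular. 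The Dirichlet-to-Neumann transfer you sketch does not break this circularity, because reinjecting the flux from $\Omega_2$ back into $\Omega_1$ still leaves you needing an $H^{\frac12}(\Gamma)$ bound on $\gamma[\![u]\!]$.

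The paper closes the gap by a different mechanism: it \emph{improves} the basic energy estimate from $\|\nabla u_i\|_{\Omega_i}\lesssim\gamma^{-1/2}$ to $\|\nabla u_i\|_{\Omega_i}\lesssim\gamma^{-1}$. This is done by a second integration-by-parts pass: since $-\mu_i\Delta u_i=0$ and $[\![\mu\nabla u\cdot\bm n]\!]=0$, one has
\[
\mu_1\|\nabla u_1\|_{\Omega_1}^2+\mu_2\|\nabla u_2\|_{\Omega_2}^2
=\mu_1\langle\nabla u_1\cdot\bm n,[\![u]\!]\rangle_\Gamma
=\gamma^{-1}\mu_1\langle\nabla u_1\cdot\bm n,g\rangle_\Gamma-\gamma^{-1}\mu_1^2\|\nabla u_1\cdot\bm n\|_\Gamma^2,
\]
and after dropping the nonpositive term and estimating $\langle\nabla u_1\cdot\bm n,g\rangle_\Gamma\le\|\nabla u_1\cdot\bm n\|_{-\frac12,\Gamma}\|g\|_{\frac12,\Gamma}\le C\|\nabla u_1\|_{\Omega_1}\|g\|_{\frac12,\Gamma}$ (the $H^{-1/2}$ bound coming from a harmonic-extension duality), one obtains $\|\nabla u_1\|_{\Omega_1}\le C\gamma^{-1}\|g\|_{\frac12,\Gamma}$. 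With this improved rate, Poincar\'e gives $\|u_i\|_{1,\Omega_i}\lesssim\gamma^{-1}$, and then the trace inequality yields $(\gamma+1)\|[\![u]\!]\|_{\frac12,\Gamma}\le C(\gamma+1)\gamma^{-1}(1+(\mu_1/\mu_2)^{1/2})\|g\|_{\frac12,\Gamma}$, which is uniformly bounded by $(1+\gamma_0^{-1})(\cdots)$. The final $H^2$ bound then follows by invoking Theorem~\ref{basic-robin} with modified data $g-(\gamma-1)[\![u]\!]$, rather than appealing to Theorem~\ref{cylinder-2} on each subdomain separately. This improved gradient estimate is the step your proposal is missing.
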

\begin{proof}
	For any $v\in V$, we have the variational form for \eqref{def-robin-1}: 
	\begin{align}\label{def-robin-var}
		{ \sum_{i=1}^2(\mu_i\nabla u_i,\nabla v_i)}+\gamma\langle [\![u]\!], [\![v]\!] \rangle_{\Gamma}= \langle g, [\![v]\!] \rangle_{\Gamma}
		.
	\end{align}
	The existence of a unique solution $u\in V$ of \eqref{def-robin-var} is followed by the ‌Lax-Milgram Theorem. We take $v=u$ in the \eqref{def-robin-var}, and use the Cauchy- {Schwarz} inequality, the Young's inequality to get
	\begin{align*}
		\mu_1\|\nabla u_1\|_{\Omega_1}^2
		+\mu_2\|\nabla u_2\|_{\Omega_2}^2
		+\gamma\|[\![u]\!]\|_{\Gamma}^2&\le
		\langle g, [\![u]\!] \rangle_{\Gamma}\le \|g\|_{\Gamma}
		\|[\![u]\!]\|_{\Gamma}\\&{ \le  \frac 1 2\gamma^{-1}\|g\|^2_{\Gamma}
			+\frac 1 2\gamma\|[\![u]\!]\|_{\Gamma}^2},
	\end{align*}
	which leads to
	\begin{align*}
		\mu_1\|\nabla u_1\|_{\Omega_1}^2
		+\mu_2\|\nabla u_2\|_{\Omega_2}^2
		+\gamma\|[\![u]\!]\|_{\Gamma}^2&\le C\gamma^{-1}\|g\|_{\Gamma}^2.
	\end{align*}
	The above inequality implies that
	\begin{align}\label{bound-u-Gamma}
		\|[\![u]\!]\|_{\Gamma}&\le C\gamma^{-1}\|g\|_{\Gamma}.
	\end{align}
	By integration by parts and the  {Cauchy}- {Schwarz} inequality, for any $v\in H^1(\Omega_1)$, we can get
	\begin{align}\label{green}
		\langle \nabla u\cdot\bm n, v\rangle_{\Gamma}
		&=(\Delta u, v)_{\Omega_1} + (\nabla u,\nabla v)_{\Omega_1}
		\le \|\Delta u\|_{\Omega_1}\|v\|_{\Omega_1}+\|\nabla u\|_{\Omega_1}\|v\|_{1, \Omega_1}.
	\end{align}
	We define the problem: 
	For any given $w\in H^{\frac 1 2}(\Gamma)$
	Find $v\in H^1(\Omega_1)$ such that
	\begin{align*}
		-\Delta v&= 0\qquad \text{in }\Omega_1,\\
		v&=w \qquad\text{on }\Gamma,\\
		v&=0\qquad\text{on }\partial\Omega_1/\Gamma,
	\end{align*}
	By \Cref{cylinder-1}, then it holds
	\begin{align}\label{bound-v}
		\|v\|_{1,\Omega_1}\le C\|w\|_{\frac 1 2,\Gamma}.
	\end{align}
	So  by the definition of $H^{-\frac 1 2}$ norm, \eqref{green} and \eqref{bound-v}, it holds
	\begin{align}\label{bound-neg-0.5}
		\begin{split}
			\|\nabla u_1\cdot\bm n\|_{-\frac 1 2,\Gamma}&=\sup_{ w\in H^{\frac 1 2}(\Gamma),\|w\|_{\frac 1 2, \Gamma}=1}\langle\nabla u_1\cdot\bm n, w \rangle_{\Gamma}\\
			&=\sup_{ w\in H^{\frac 1 2}(\Gamma),	\|w\|_{\frac 1 2, \Gamma}=1}\langle\nabla u_1\cdot\bm n, v \rangle_{\Gamma}
			\\
			&\le \sup_{ w\in H^{\frac 1 2}(\Gamma),	\|w\|_{\frac 1 2, \Gamma}=1} \|\Delta u_1\|_{\Omega_1}\|v\|_{\Omega_1}+\|\nabla u_1\|_{\Omega_1}\|v\|_{1, \Omega_1}
			\\
			&\le C( \|\Delta u_1\|_{\Omega_1}+\|\nabla u_1\|_{\Omega_1}  )
			,
		\end{split}
	\end{align}
	With the help of \eqref{bound-neg-0.5}, we will improve the estimation for $\mu_1\|\nabla u_1\|_0^2
	+\mu_2\|\nabla u_2\|_0^2$. By integration by parts, \eqref{def-robin-1}, the  {Cauchy}- {Schwarz} inequality and \eqref{bound-neg-0.5} to get
	{ 
		\begin{align*}
			&\mu_1\|\nabla u_1\|_0^2
			+\mu_2\|\nabla u_2\|_0^2\\
			&\qquad
			=\mu_1(\nabla u_1,\nabla u_1)_{\Omega_1}
			+\mu_2(\nabla u_2,\nabla u_2)_{\Omega_2}\\
			&\qquad
			=-\mu_1(\Delta u_1, u_1)_{\Omega_1}
			+\mu_1\langle\nabla u_1\cdot\bm n,u_1 \rangle_{\Gamma} 
			-\mu_2(\Delta u_2, u_2)_{\Omega_2}
			-\mu_2\langle\nabla u_2\cdot\bm n,u_2 \rangle_{\Gamma} 
			\\
			&\qquad
			=-\mu_1(\Delta u_1, u_1)
			-\mu_2(\Delta u_2, u_2)
			+\mu_1\langle\nabla u_1\cdot\bm n,[\![u]\!] \rangle_{\Gamma}+\langle[\![\mu\nabla u\cdot\bm n]\!], u_2\rangle_\Gamma
			\\
			&\qquad
			=-\mu_1(\Delta u_1, u_1)
			-\mu_2(\Delta u_2, u_2)
			+\mu_1\langle\nabla u_1\cdot\bm n,[\![u]\!] \rangle_{\Gamma} 
			\\
			&\qquad =  \gamma^{-1}\langle\mu_1\nabla u_1\cdot\bm n, g-\mu_1\nabla u_1\cdot\bm n\rangle_{\Gamma} \\
			&\qquad =  \gamma^{-1}\mu_1\langle\nabla u_1\cdot\bm n, g\rangle_{\Gamma} - \gamma^{-1}\mu_1^2\|\nabla u_1\cdot\bm n\|_{\Gamma}^2\\
			&  \qquad \le
			C\gamma^{-1}\mu_1\|g\|_{\frac 1 2,\Gamma}\|\nabla u_1\|_{\Omega_1}.
		\end{align*}
		Then we can conclude
		\begin{align*}
			\|\nabla u_1\|_{0,\Omega_1}\le C\gamma^{-1}\|g\|_{\frac{1}{2},\Gamma},
		\end{align*}
		which leads to
		\begin{align}\label{bound-grand-u}
			\mu_1\|\nabla u_1\|_{0,\Omega_1}^2
			+\mu_2\|\nabla u_2\|_{0,\Omega_2}^2\le C\gamma^{-2}\mu_1\|g\|_{\frac 1 2,\Gamma}^2.
		\end{align}
		We use Poincaré inequality to get
		\begin{align}\label{bound-u-1}
			\|u_1\|_{1,\Omega_1}
			+	\|u_2\|_{1,\Omega_2}&\le C\gamma^{-1}(
			1
			+
			(\frac{\mu_1}{\mu_2})^{\frac 1 2})\|g\|_{\frac 1 2,\Gamma}.
		\end{align}
		At last, we use \Cref{basic-robin}, \eqref{def-robin-1} and the triangle inequality to get
		\begin{align}\label{bound-u-2}
			\begin{split}
				&\mu_1\| u\|_{2,\Omega_1} + 	\mu_2\| u\|_{2,\Omega_2}\\
				&\qquad\le C {\max\{\mu_1^{-1},\mu_2^{-1},1\}}(\|\nabla\cdot(\mu\nabla u)\|_0+\|[\![u]\!]+\mu_1\nabla u_1\cdot\bm n\|_{\frac 1 2,\Gamma} )\\
				&
				\qquad=  C {\max\{\mu_1^{-1},\mu_2^{-1},1\}}\|g-(\gamma-1)[\![u]\!]\|_{\frac 1 2,\Gamma}\\
				&\qquad\le C {\max\{\mu_1^{-1},\mu_2^{-1},1\}}\left(\|g\|_{\frac 1 2,\Gamma}+C(\gamma+1)\|[\![u]\!]\|_{\frac 1 2,\Gamma}\right).
			\end{split}
		\end{align}
		The remain is to bound $(\gamma+1)\|u\|_{\frac 1 2,\Gamma}$. By 
		\eqref{bound-u-1}  to get
		\begin{align}\label{bound-gamma-u}
			\begin{split}
				(1+\gamma)\|[\![u]\!]\|_{\frac12 ,\Gamma} 
				&\le  C(1+\gamma)(\|u\|_{1,\Omega_1}+\|u\|_{1,\Omega_2})  \\
				&\le  C(1+\gamma)\gamma^{-1}(1+\left(\frac{\mu_1}{\mu_2}\right)^{\frac 1 2})\|g\|_{\frac 1 2,\Gamma}\\
				&\le C(1+\gamma_0^{-1})(1+\left(\frac{\mu_1}{\mu_2}\right)^{\frac 1 2})\|g\|_{\frac 1 2,\Gamma}\\
			\end{split}
		\end{align}
		We use \eqref{bound-u-2} and \eqref{bound-gamma-u} to finish our proof.}
\end{proof}

Next, we are ready to prove the following regularity for the Laplace problem with robin boundary condition.
\begin{theorem}[Regularity for robin boundary condition]  \label{robin-reg-2}
	For $f\in L^2(\Omega)$, $g_N\in H^{\frac 1 2}(\Gamma)$ and $g_D\in H^{\frac 1 2}(\Gamma)$, we consider the elliptic problems
	with  robin boundary condition: Find $u\in V$ such that
	\begin{align}\label{def-robin-2}
		\begin{split}
			-\nabla\cdot(\mu\nabla u)&=f\qquad\;\; \text{in }\Omega,\\
			\gamma[\![u]\!]+\mu_1\nabla u_1\cdot\bm n&=g_D\qquad \text{on }\Gamma,\\
			[\![\mu\nabla u\cdot\bm n]\!]&=g_N\qquad \text{on }\Gamma.
		\end{split}
	\end{align}
	Then when $\Omega$ is convex, $\gamma\ge \gamma_0>0$, we have
	{ 
		\begin{align}\label{robin-final-reg}
			&\mu_1\| u\|_{2,\Omega_1} + 	\mu_2\| u\|_{2,\Omega_2}\\
			&\le  C(1+\gamma_0^{-1}) {\max\{\mu_1^{-1},\mu_2^{-1},1\}}(1+\left(\frac{\mu_1}{\mu_2}\right)^\frac{1}{2})(\|f\|_0+\|g_N\|_{\frac 1 2,\Gamma}+\|g_D\|_{\frac 1 2,\Gamma}),
	\end{align}}
	where $C$ is independent of $\gamma$ and $\gamma_0$, but dependent on $\Omega$.
\end{theorem}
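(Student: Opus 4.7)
The plan is to reduce Theorem \ref{robin-reg-2} to the two regularity results already in hand, namely Theorem \ref{basic00} (interface problem with $[\![u]\!]=0$) and Theorem \ref{robin-reg1} (Robin interface problem with pure jump data and $f=0$, $g_N=0$), via a linear decomposition $u = v + w$ that splits the sources.

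First I would let $w\in V$ solve the auxiliary interface problem
\begin{align*}
-\nabla\cdot(\mu\nabla w) &= f\qquad \text{in }\Omega,\\
[\![w]\!] &= 0\qquad \text{on }\Gamma,\\
[\![\mu\nabla w\cdot\bm n]\!] &= g_N\qquad \text{on }\Gamma,
\end{align*}
so by Theorem \ref{basic00} one has $\mu_1\|w\|_{2,\Omega_1}+\mu_2\|w\|_{2,\Omega_2}\le C(\|f\|_0+\|g_N\|_{\frac12,\Gamma})$. Setting $v:=u-w$ and using $[\![w]\!]=0$, the residual $v$ satisfies exactly the hypotheses of Theorem \ref{robin-reg1} with source $f\equiv 0$, homogeneous flux jump, and Robin datum
\[
g := g_D - \mu_1\nabla w_1\cdot\bm n \in H^{\tfrac12}(\Gamma).
\]

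Next I would bound $\|g\|_{\frac12,\Gamma}$. By the trace theorem and the regularity of $w$,
\[
\|\mu_1\nabla w_1\cdot\bm n\|_{\frac12,\Gamma}\le C\,\mu_1\|w\|_{2,\Omega_1}\le C(\|f\|_0+\|g_N\|_{\frac12,\Gamma}),
\]
so $\|g\|_{\frac12,\Gamma}\le \|g_D\|_{\frac12,\Gamma}+C(\|f\|_0+\|g_N\|_{\frac12,\Gamma})$. Applying Theorem \ref{robin-reg1} to $v$ then yields
\begin{align*}
\mu_1\|v\|_{2,\Omega_1}+\mu_2\|v\|_{2,\Omega_2}
&\le C\max\{\mu_1^{-1},\mu_2^{-1},1\}\bigl((1+\gamma_0^{-1})\bigl(1+(\mu_1/\mu_2)^{\frac12}\bigr)+1\bigr)\|g\|_{\frac12,\Gamma}\\
&\le C(1+\gamma_0^{-1})\max\{\mu_1^{-1},\mu_2^{-1},1\}\bigl(1+(\mu_1/\mu_2)^{\frac12}\bigr)\bigl(\|f\|_0+\|g_N\|_{\frac12,\Gamma}+\|g_D\|_{\frac12,\Gamma}\bigr).
\end{align*}

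Finally I would combine via the triangle inequality $\mu_i\|u\|_{2,\Omega_i}\le \mu_i\|v\|_{2,\Omega_i}+\mu_i\|w\|_{2,\Omega_i}$ and observe that the bound on $w$ is absorbed by the bound on $v$, since the prefactor $(1+\gamma_0^{-1})\max\{\mu_1^{-1},\mu_2^{-1},1\}(1+(\mu_1/\mu_2)^{\frac12})\ge 1$. This gives \eqref{robin-final-reg}. The only non-trivial step is the trace bound on $\mu_1\nabla w_1\cdot\bm n$ in $H^{1/2}(\Gamma)$, whose constant is independent of $\mu$ because the regularity estimate from Theorem \ref{basic00} already absorbs the factor $\mu_1$; the rest is bookkeeping of $\mu$- and $\gamma_0$-dependent constants.
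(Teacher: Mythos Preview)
Your proposal is correct and follows essentially the same approach as the paper: define the auxiliary $w$ via Theorem~\ref{basic00} to absorb $f$ and $g_N$, observe that $v=u-w$ satisfies the homogeneous Robin interface problem of Theorem~\ref{robin-reg1} with datum $g=g_D-\mu_1\nabla w_1\cdot\bm n$, bound $\|g\|_{\frac12,\Gamma}$ by the trace estimate and the regularity of $w$, and finish with the triangle inequality. The paper's proof is the same decomposition and the same chain of estimates.
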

\begin{proof} We define the problem: Find $w\in V$ such that
	\begin{align*}
		-\nabla\cdot(\mu\nabla w)&=f\qquad \text{in }\Omega,\\
		[\![w]\!]&=0\qquad \text{on }\Gamma,\\
		[\![\mu\nabla w\cdot\bm n]\!]&=g_N\qquad\!\!\!\! \text{on }\Gamma
	\end{align*}
	Then when $\Omega$ is convex, it holds 
	\begin{align}\label{reg-w}
		\mu_1\| w_1\|_{2,\Omega_1}
		+
		\mu_2\|w_2\|_{2,\Omega_2}
		\le C(\|f\|_0 +\|g_N\|_{\frac 1 2,\Gamma}).
	\end{align} 
	Therefore, we can rewrite the problem as:
	\begin{align*}
		-\nabla\cdot(\mu\nabla (u-w))&=0
		\qquad\qquad\qquad\qquad\; \text{in }\Omega,\\
		\gamma[\![u-w]\!]+\mu_1\nabla (u_1-w_1)\cdot\bm n&=g_D-\mu_1\nabla w_1\cdot\bm n\qquad \text{on }\Gamma,\\
		[\![\mu\nabla (u-w)\cdot\bm n]\!]&=0\qquad\qquad\qquad\qquad\; \text{on }\Gamma
	\end{align*}
	Then by \Cref{robin-reg1}, the triangle inequality and the regularity estimation \eqref{reg-w} we can get
	\begin{align*}
		&\mu_1\|u-w\|_{2,\Omega_1}
		+
		\mu_2\|u-w\|_{2,\Omega_2}\\
		&\quad\le  C {\max\{\mu_1^{-1},\mu_2^{-1},1\}(1+\left(\frac{\mu_1}{\mu_2}\right)^{\frac{1}{2}})}(1+\gamma_0^{-1})\|g_D-\mu_1\nabla w_1\cdot\bm n\|_{\frac 1 2,\Gamma}\\
		&\quad\le  C {\max\{\mu_1^{-1},\mu_2^{-1},1\}(1+\left(\frac{\mu_1}{\mu_2}\right)^{\frac{1}{2}})}(1+\gamma_0^{-1})(\mu_1\|w_1\|_{2,\Omega_1}+\|g_D\|_{\frac 1 2,\Gamma})\\
		&\quad\le  C {\max\{\mu_1^{-1},\mu_2^{-1},1\}(1+\left(\frac{\mu_1}{\mu_2}\right)^{\frac{1}{2}})}(1+\gamma_0^{-1})(\|f\|_0+\|g_N\|_{\frac 1 2,\Gamma}+\|g_D\|_{\frac 1 2,\Gamma}),
	\end{align*}
	which finishes our proof by the triangle inequality and \eqref{reg-w}.
\end{proof}

\subsection{Regularity for the dual problem}\label{regualar-for-dual}

Before, we give the regularity result. We first give two stability results.
\begin{theorem}
	When $\Omega$ is convex, for any given $g\in H^{\frac 1 2}(\Gamma)$, find $u\in V$ such 
	\begin{align*}\label{org-adjoint}
		-\nabla\cdot(\mu \nabla u)&=0\qquad \text{in }\Omega,\\
		[\![\mu \nabla u\cdot\bm n]\!]&=0\qquad \text{on }\Gamma,\\
		[\![u]\!]&=g\qquad \text{on }\Gamma.
	\end{align*}
	Then above problem has a unique solution and it holds the stability
	\begin{align}
		\|u\|_{0}\le  C\|g\|_{-\frac 1 2,\Gamma}.
	\end{align}
\end{theorem}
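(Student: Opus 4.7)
The plan is to handle existence by a standard lifting argument and then to obtain the $L^2$-stability via a duality (Aubin--Nitsche-type) argument built on a companion adjoint interface problem with homogeneous jump, whose $H^2$-regularity is already guaranteed by \Cref{basic00}.

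\textbf{Existence.} First I would construct a lifting $u^{*}\in V$ with $[\![u^{*}]\!]=g$ by taking $u_1^{*}\in H^1(\Omega_1)$ to be any $H^1$-extension of $g$ (for example a harmonic extension) vanishing on $\partial\Omega\cap\partial\Omega_1$, and setting $u_2^{*}=0$. Writing $w=u-u^{*}$ one has $w\in H_0^1(\Omega)$, and the problem reduces to finding $w$ with $\sum_i(\mu_i\nabla w_i,\nabla v_i)_{\Omega_i}=-\mu_1(\nabla u_1^{*},\nabla v_1)_{\Omega_1}$ for all $v\in H_0^1(\Omega)$. Lax--Milgram gives a unique $w$, hence a unique $u=w+u^{*}\in V$.

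\textbf{Duality step.} For the estimate, for any $\phi\in L^2(\Omega)$ I would let $z\in V$ solve the adjoint problem
\[
-\nabla\cdot(\mu\nabla z)=\phi\ \text{in }\Omega,\qquad [\![z]\!]=0,\quad [\![\mu\nabla z\cdot\bm n]\!]=0\ \text{on }\Gamma,\qquad z=0\ \text{on }\partial\Omega,
\]
and apply \Cref{basic00} to get $\mu_1\|z\|_{2,\Omega_1}+\mu_2\|z\|_{2,\Omega_2}\le C\|\phi\|_0$ with $C$ independent of $\mu$. Computing $(u,\phi)=-(u,\nabla\cdot(\mu\nabla z))$ by integrating by parts on each $\Omega_i$, and keeping track that $\bm n$ points from $\Omega_1$ to $\Omega_2$ so that the outward normals on $\Gamma$ are $\bm n$ and $-\bm n$ respectively, one finds that the boundary terms pairing $u_i$ with $\mu_i\nabla z_i\cdot\bm n$ collapse — via $[\![\mu\nabla z\cdot\bm n]\!]=0$, denoting the common trace by $\psi$ — to $-\langle[\![u]\!],\psi\rangle_\Gamma=-\langle g,\psi\rangle_\Gamma$. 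Integrating the gradient term back and using $\nabla\cdot(\mu\nabla u)=0$ together with $[\![z]\!]=0$ and $[\![\mu\nabla u\cdot\bm n]\!]=0$ makes all remaining volume and interface contributions vanish, leaving exactly
\[
(u,\phi)=-\langle g,\psi\rangle_\Gamma.
\]

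\textbf{Conclusion and main obstacle.} The $H^{-1/2}$--$H^{1/2}$ duality on $\Gamma$ together with the trace theorem then yield
\[
|(u,\phi)|\le \|g\|_{-\frac12,\Gamma}\|\psi\|_{\frac12,\Gamma}\le C\|g\|_{-\frac12,\Gamma}\min\bigl\{\mu_1\|z\|_{2,\Omega_1},\mu_2\|z\|_{2,\Omega_2}\bigr\}\le C\|g\|_{-\frac12,\Gamma}\|\phi\|_0,
\]
and choosing $\phi=u$ delivers $\|u\|_0\le C\|g\|_{-\frac12,\Gamma}$. The main technical point is the integration-by-parts bookkeeping: the two separate cancellations — of $[\![u]\!]$ paired with the common trace of $\mu\nabla z\cdot\bm n$, and of $[\![\mu\nabla u\cdot\bm n]\!]$ paired with the trace of $z$ — must survive simultaneously, which requires the jump conditions on $u$ and $z$ to be complementary exactly as set up. The $\mu$-independence of the final constant is automatic once one combines the $\mu$-weighted $H^2$-bound of \Cref{basic00} with the trace estimate $\|\mu_i\nabla z_i\cdot\bm n\|_{\frac12,\Gamma}\le C\mu_i\|z_i\|_{2,\Omega_i}$ and keeps whichever of $i=1,2$ is more convenient.
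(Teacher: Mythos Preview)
Your proposal is correct and follows essentially the same duality argument as the paper: introduce the adjoint interface problem with homogeneous jumps and right-hand side $\phi$ (the paper calls it $\theta$), invoke \Cref{basic00} for its $\mu$-weighted $H^2$-regularity, integrate by parts so that only $-\langle g,\mu_1\nabla z_1\cdot\bm n\rangle_\Gamma$ survives, and close with the $H^{-1/2}$--$H^{1/2}$ duality plus trace bound. The only differences are cosmetic: you supply an explicit existence argument via lifting (which the paper omits), and you conclude by setting $\phi=u$ whereas the paper takes the supremum over $\theta$, but these are equivalent.
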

\begin{proof} For any given $\theta\in L^2(\Omega)$, find $w\in V$ such that
	\begin{align}\label{def-w}
		\begin{split}
			-\nabla\cdot(\mu \nabla w)&=\theta\qquad \text{in }\Omega,\\
			[\![\mu \nabla w\cdot\bm n]\!]&=0\qquad \text{on }\Gamma,\\
			[\![w]\!]&=0\qquad \text{on }\Gamma.
		\end{split}
	\end{align}
	Then it holds the stability 
	\begin{align}\label{reg-ww}
		\mu_1\|w\|_{2,\Omega_1}
		+\mu_2\|w\|_{2,\Omega_2}\le C\|\theta\|_0.
	\end{align}
	We use \eqref{def-w}, integration by parts, the  {Cauchy}- {Schwarz} inequality and the regularity \eqref{reg-ww} to get
	\begin{align*}
		(u,\theta)&=(u, -\nabla\cdot(\mu \nabla w))
		=(\nabla u, \mu\nabla w)
		-\langle g,\mu_1\nabla w_1\cdot\bm n_1 \rangle_{\Gamma}
		\\
		&=-\langle g,\mu_1\nabla w_1\cdot\bm n_1 \rangle_{\Gamma}
		\le \|g\|_{-\frac 1 2,\Gamma}{ \|\mu_1\nabla w_1\cdot\bm n\|_{\frac{1}{2},\Gamma}}\\
		&
		\le C\|g\|_{-\frac 1 2,\Gamma}\mu_1\|w_1\|_{2,\Omega_1}\le  C\|g\|_{-\frac 1 2,\Gamma}\|\theta\|_0
	\end{align*}
	With the above estimation, it arrives
	\begin{align*}
		\|u\|_{0}
		&=\sup_{0\neq \theta\in L^2(\Omega)}
		\frac{(u,\theta)}{\|\theta\|_0}\le C\|g\|_{-\frac 1 2,\Gamma},
	\end{align*}
	which finish our proof.
\end{proof}

\begin{definition} For $u, v\in V$, we define 
	\begin{align}
		\begin{split}
			a^\star(u, v)
			&=\mu_1(\nabla u_1, \nabla v_1)_{\Omega_1}
			+ \mu_2(\nabla u_2, \nabla v_2)_{\Omega_2}\\
			&\quad
			+\langle \{\!\!\{\mu\nabla u\cdot\bm n\}\!\!\},[\![v]\!] \rangle_{\Gamma}
			-\langle [\![u]\!],\{\!\!\{\mu\nabla v\cdot\bm n\}\!\!\} \rangle_{\Gamma}
			+\langle c_0\h^{-1}[\![u]\!],[\![v]\!] \rangle_{\Gamma}.
		\end{split}
	\end{align}
	It is obviously that 
	\begin{align}\label{sys}
		a(v ,u)=a^{\star}(u, v).
	\end{align}
\end{definition}
{ 
	\begin{remark}
		Now let $\psi$ solve the dual problem \eqref{aux}:
		\begin{align*}
			a^{\star}(\psi,v)=(g,v)
		\end{align*}
		for all $v\in V$. Then by \eqref{sys} we can imply 
		\begin{align}\label{adjoint-consis}
			a(v,\psi)=a^{\star}(\psi,v)=(g,v)=(v,g),\quad\forall v\in V.
		\end{align}
		\Cref{adjoint-consis} presents the form \eqref{fem-org} is adjoint consistent defined in \cite{MR1885715}. For the adjoint boundary value problem \eqref{org-adjoint} of the non-symmetric Nitsche's method considered in the past, the corresponding dual problem form is
		\begin{align*}
			a(\psi,v)=(g,v),\quad\forall v\in V,
		\end{align*}
		whose adjoint formulation is 
		\begin{align*}
			a(v,\psi)&=a(\psi,v)-2\langle\{\!\!\{\mu\nabla\psi\cdot\bm n\}\!\!\}, [\![v]\!]\rangle_\Gamma+2\langle[\![\psi]\!],\{\!\!\{\mu\nabla v\cdot\bm n\}\!\!\}\rangle_\Gamma\\
			&=(g,v)-2\langle\{\!\!\{\mu\nabla\psi\cdot\bm n\}\!\!\}, [\![v]\!]\rangle_\Gamma+2\langle[\![\psi]\!],\{\!\!\{\mu\nabla v\cdot\bm n\}\!\!\}\rangle_\Gamma\\
			&=(v,g)-2\langle\{\!\!\{\mu\nabla\psi\cdot\bm n\}\!\!\}, [\![v]\!]\rangle_\Gamma+2\langle[\![\psi]\!],\{\!\!\{\mu\nabla v\cdot\bm n\}\!\!\}\rangle_\Gamma.
		\end{align*}
		Due to non-symmetry, this formulation of adjoint consistency is broken by the additional boundary terms. When applying it to estimate $L^2$-norm, these terms causes the convergence order of its estimation to be approximately $\frac{1}{2}$ lower than the optimal one. The form of the dual problem we designed avoids this boundary term in the adjoint form, thus theoretically achieving the optimal results when dealing with $L^2$-norm estimate.
	\end{remark}
}
Now we can proof the regularity for the dual problem we used in the estimation of $L^2$ norm.
\begin{theorem}[Regularity for the dual problem]\label{dual-reg-1} We define the problem:
	Find $u\in V$ such that
	\begin{align}\label{aux}
		a^\star(u, v)
		=(f, v)
	\end{align}
	holds for all $v\in V$. Then the above problem has a unique solution $u\in V$. In addition, we have
	\begin{align}\label{dual-ref}
		\mu_1\|u\|_{2,\Omega_1}+\mu_2\|u\|_{2,\Omega_2}\le  C{ \max\{\mu_1^{-1},\mu_2^{-1},1\}}\|f\|_0.
	\end{align}
	{ In particular, if $\mu_1,\mu_2\ge1$, we have
		\begin{align}
			\mu_1\|u\|_{2,\Omega_1}+\mu_2\|u\|_{2,\Omega_2}\le  C\|f\|_0.
	\end{align}}
\end{theorem}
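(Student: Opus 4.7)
The plan is to use Lax--Milgram for existence and uniqueness, then interpret the weak problem strongly as a Robin interface problem so that \Cref{robin-reg-2} applies directly. Coercivity is automatic: the two skew boundary terms in $a^{\star}$ cancel, giving
\[
a^{\star}(u,u)=\mu_{1}\|\nabla u_{1}\|_{\Omega_{1}}^{2}+\mu_{2}\|\nabla u_{2}\|_{\Omega_{2}}^{2}+c_{0}\|\h^{-1/2}[\![u]\!]\|_{\Gamma}^{2},
\]
and continuity on $V$ follows by the same trace/inverse estimates already used for $a$. Lax--Milgram therefore produces a unique $u\in V$.

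Next I extract the strong form. Testing $a^{\star}(u,v)=(f,v)$ against $v$ supported compactly inside one subdomain gives $-\nabla\cdot(\mu\nabla u)=f$ in $\Omega_{1}\cup\Omega_{2}$. For the interface conditions, I integrate the bulk part of $a^{\star}$ by parts and use the algebraic identity
\[
\mu_{1}\langle\nabla u_{1}\cdot\bm n,v_{1}\rangle_{\Gamma}-\mu_{2}\langle\nabla u_{2}\cdot\bm n,v_{2}\rangle_{\Gamma}=\langle\{\!\!\{\mu\nabla u\cdot\bm n\}\!\!\},[\![v]\!]\rangle_{\Gamma}+\langle[\![\mu\nabla u\cdot\bm n]\!],w_{2}v_{1}+w_{1}v_{2}\rangle_{\Gamma},
\]
which splits the boundary contribution along the two independent traces $[\![v]\!]$ and $w_{2}v_{1}+w_{1}v_{2}$. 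Matching coefficients in the resulting identity yields $[\![\mu\nabla u\cdot\bm n]\!]=0$ and $2\{\!\!\{\mu\nabla u\cdot\bm n\}\!\!\}+c_{0}\h^{-1}[\![u]\!]=0$ on $\Gamma$. Using \Cref{lem:por-averge} to rewrite $\{\!\!\{\mu\nabla u\cdot\bm n\}\!\!\}$ in terms of $\mu_{i}\nabla u_{i}\cdot\bm n$, these two identities collapse to the Robin-type conditions
\[
\gamma\,[\![u]\!]+\mu_{i}\nabla u_{i}\cdot\bm n=0\quad\text{on }\Gamma,\qquad i=1,2,\qquad \gamma:=\frac{c_{0}}{2\h}.
\]

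Therefore $u$ solves a Robin interface problem of the type in \Cref{robin-reg-2} with $g_{D}=g_{N}=0$. Because $\h$ is bounded above by the domain diameter, one has $\gamma\ge\gamma_{0}$ for some fixed $\gamma_{0}>0$, so $(1+\gamma_{0}^{-1})$ is absorbed into $C$. The only remaining issue is the undesirable factor $1+(\mu_{1}/\mu_{2})^{1/2}$ in \eqref{robin-final-reg}; I would remove it by running \Cref{robin-reg-2} a second time with the roles of $\Omega_{1}$ and $\Omega_{2}$ exchanged (the derived strong form is genuinely symmetric in $i=1,2$) and taking the smaller of the two bounds, using $\min\{(\mu_{1}/\mu_{2})^{1/2},(\mu_{2}/\mu_{1})^{1/2}\}\le 1$. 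The main obstacle is this constant-tracking step: one has to verify that $\gamma_{0}$ can be chosen independently of $\mu$ and $h$ (via $c_{0}\ge\min(\mu_{1},\mu_{2))}$ from \eqref{por-c0}) and that the symmetrization really strips away all jump dependence, so that the final bound carries only the single factor $\max\{\mu_{1}^{-1},\mu_{2}^{-1},1\}$ multiplying $\|f\|_{0}$.
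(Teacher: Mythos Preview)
Your derivation of the strong form has a genuine gap. After integrating by parts and applying your algebraic identity, the boundary residual reads
\[
\langle 2\{\!\!\{\mu\nabla u\cdot\bm n\}\!\!\}+c_0\h^{-1}[\![u]\!],[\![v]\!]\rangle_\Gamma
+\langle[\![\mu\nabla u\cdot\bm n]\!],w_2v_1+w_1v_2\rangle_\Gamma
-\langle[\![u]\!],\{\!\!\{\mu\nabla v\cdot\bm n\}\!\!\}\rangle_\Gamma=0,
\]
and you have silently dropped the third term. That term is genuinely present in $a^\star$ and does not cancel. If you ``match coefficients'' honestly, choosing $v$ with $v_1|_\Gamma=v_2|_\Gamma=0$ and $\{\!\!\{\mu\nabla v\cdot\bm n\}\!\!\}$ arbitrary forces $[\![u]\!]=0$ as well, and the resulting strong form is over-determined (both jumps vanish \emph{and} $\mu_i\nabla u_i\cdot\bm n=0$), which is incompatible with a general $f$. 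So the Robin condition cannot be read off as an exact identity with $g_D=0$; the coefficient-matching route simply does not close.

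The paper handles this differently and the difference is the crux of the argument. It never claims the Robin data vanish. Instead it \emph{defines} $g_D:=c_0\h^{-1}[\![u]\!]+\mu_1\nabla u_1\cdot\bm n$ and proves the quantitative bound $\|g_D\|_{1/2,\Gamma}\le C\|f\|_0$. This is done by testing $a^\star(u,v)=(f,v)$ against $v$ solving the auxiliary transmission problem $-\nabla\cdot(\mu\nabla v)=0$, $[\![\mu\nabla v\cdot\bm n]\!]=0$, $[\![v]\!]=\beta_\theta$; integrating by parts onto $v$ (not $u$) makes the troublesome term $\langle[\![u]\!],\{\!\!\{\mu\nabla v\cdot\bm n\}\!\!\}\rangle_\Gamma$ cancel exactly, leaving $\langle g_D,\beta_\theta\rangle_\Gamma=(f,v)$. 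The paper then invokes the stability estimate $\|v\|_0\le C\|\beta_\theta\|_{-1/2,\Gamma}$, established just before the theorem, to convert this into the desired $H^{1/2}$ bound on $g_D$. That duality/stability step is the missing idea in your plan, and without it you cannot feed anything useful into \Cref{robin-reg-2}.
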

\begin{proof}
	The exsitence of a unique solution $u\in V$ followed by the Lax-Milgram Theorem and the coververity
	{ 
		\begin{align}
			a^{\star}(u,u)=\mu_1\|\nabla u_1\|^2_{\Omega_1}
			+\mu_2\|\nabla u_2\|^2_{\Omega_2}+c_0\|\h^{-\frac 1 2}[\![u]\!]\|_{\Gamma}^2.
	\end{align}}
	For the regularity estimation \eqref{dual-ref} we use four steps to finish our proof. We define the  index $i$ as $i=1,2$.
	
	\textbf{Step One, proof of $-\mu_i\Delta u_i = f|_{\Omega_i}$:}  
	
	Since the space $H_0^2(\Omega_i)$ is dense in the space $L^2(\Omega_i)$, then for any $w\in L^2(\Omega_i)$, $\|w\|_{\Omega_i}=1$ and $\varepsilon\in (0, 1)$, there exists a function $v_w\in H_0^2(\Omega_i)$ such that
	\begin{align}
		\|w-v_w\|_{\Omega_i}\le \varepsilon.
	\end{align} 
	We take $v_w\in H_0^2(\Omega_i)$  in \eqref{aux} and use integration by parts to get
	\begin{align}
		-\mu_i(\Delta u_i, v_w)_{\Omega_i}=(f, v_w)_{\Omega_i}.
	\end{align}
	
	So by the definition of $L^2$ norm, it holds 
	\begin{align*}
		\|\mu_i\Delta u_i +f\|_{\Omega_i}&=\sup_{w\in L^2(\Omega_i), \|w\|_{\Omega_i}=1}(\mu_i\Delta u_i +f, w)_{\Omega_i} \\
		&=\sup_{w\in L^2(\Omega_i),\|w\|_{\Omega_i}=1}
		\left[(\mu_i\Delta u_i +f, w-v_w)_{\Omega_i}+(\mu_i\Delta u_i +f, v_w)_{\Omega_i}\right] \\
		&=\sup_{w\in L^2(\Omega_i),\|w\|_{\Omega_i}=1}(\mu_i\Delta u_i {+f}, w-v_w)_{\Omega_i}\\
		&=\sup_{w\in L^2(\Omega_i),\|w\|_{\Omega_i}=1}\|\mu_i\Delta u_i +f\|_{\Omega_i} \|w-v_w\|_{\Omega_i} \\
		&\le \varepsilon\|\mu_i\Delta u_i+f\|_{\Omega_i},
	\end{align*}
	which leads to
	\begin{align*}
		(1-\varepsilon)\| {\mu_i}\Delta u_i +f\|_{\Omega_i}\le 0.
	\end{align*}
	So
	\begin{align*}
		- {\mu_i}\Delta u_i = f|_{\Omega_i}.
	\end{align*}
	\textbf{Step Two, proof of $[\![\mu\nabla u\cdot\bm n]\!]=0$:} 
	
	Since $H^{\frac 1 2}(\Gamma)=H^{\frac 1 2}_0(\Gamma)$ is dense in $L^2(\Gamma)$, then for any $\zeta\in L^2(\Gamma)$, $\|\zeta\|_{\Gamma}=1$ and $0<\varepsilon<1$, there exists $\beta_{\zeta}$
	\begin{align}
		\|\zeta-\beta_{\zeta}\|_{\Gamma}\le \varepsilon.
	\end{align}

	For any $\beta_{\zeta}\in H^{\frac 1 2}(\Gamma)$, we take $v\in V$ such that
	\begin{align}
		\mu_1\nabla v_1\cdot\bm n=\mu_2\nabla v_2\cdot\bm n=0,\qquad 
		v_1|_{\Gamma}=v_2|_{\Gamma}=\beta_{\zeta}.
	\end{align}
	{ 
		So from \eqref{aux} and definition of $a^{\star}(\cdot,\cdot)$, it holds
		\begin{align*}
			0&=\mu_1(\nabla u_1,\nabla v_1)_{\Omega_1}+\mu_2(\nabla u_2,\nabla v_2)_{\Omega_2}-(f,v)\\
			&=-\mu_1(\Delta u_1,v_1)_{\Omega_1}
			-\mu_2(\Delta u_2,v_2)_{\Omega_2}
			-(f, v)+\langle\mu_1\nabla u_1\cdot\bm n,v_1\rangle_\Gamma-\langle\mu_2\nabla u_2\cdot\bm n,v_2\rangle_\Gamma \\
			&=\langle
			[\![\mu\nabla u\cdot\bm n]\!], \beta_\zeta
			\rangle_{\Gamma}-\mu_1(\Delta u_1,v_1)_{\Omega_1}
			-\mu_2(\Delta u_2,v_2)_{\Omega_2}
			-(f, v),
		\end{align*}
		which implies
		\begin{align*}
			\langle
			[\![\mu\nabla u\cdot\bm n]\!], \beta_\zeta
			\rangle_{\Gamma}
			= \mu_1(\Delta u_1,v_1)_{\Omega_1}+
			\mu_2(\Delta u_2,v_2)_{\Omega_2}
			+(f, v) =0.
		\end{align*}
	}
	
	Then it holds
	\begin{align*}
		\|[\![\mu\nabla u\cdot\bm n]\!]\|_{\Gamma}
		&=\sup_{\zeta\in L^2(\Gamma), \|\zeta\|_{\Gamma}=1}
		\langle[\![\mu\nabla u\cdot\bm n]\!], \zeta \rangle_{\Gamma} \\
		&=\sup_{\zeta\in L^2(\Gamma), \|\zeta\|_{\Gamma}=1}\left[
		\langle[\![\mu\nabla u\cdot\bm n]\!], \zeta-\beta_\zeta \rangle_{\Gamma}
		+
		\langle[\![\mu\nabla u\cdot\bm n]\!], \beta_\zeta \rangle_{\Gamma}
		\right]\\
		&=\sup_{\zeta\in L^2(\Gamma), \|\zeta\|_{\Gamma}=1}
		\langle[\![\mu\nabla u\cdot\bm n]\!], \zeta-\beta_\zeta \rangle_{\Gamma}
		\\
		&\le
		\sup_{\zeta\in L^2(\Gamma), \|\zeta\|_{\Gamma}=1}
		\|[\![\mu\nabla u\cdot\bm n]\!]\|_{\Gamma}
		\| \zeta-\beta_\zeta \|_{\Gamma}\\
		&\le
		\varepsilon
		\|[\![\mu\nabla u\cdot\bm n]\!]\|_{\Gamma},
	\end{align*}
	which leads to
	{
		\begin{align*}
			(1-\varepsilon)\|[\![\mu\nabla u\cdot\bm n]\!]\|_{\Gamma}
			\le 0,
	\end{align*}}
	Then 
	\begin{align}
		[\![\mu\nabla u\cdot\bm n]\!]=0.
	\end{align}
	
	\textbf{Step Three, proof of $		\|
		c_0\h^{-1}[\![u]\!]+
		\mu_1\nabla u_1\cdot\bm n\|_{\frac 1 2,\Gamma}\le C\|f\|_{0}$:} 
	
	Since $H^{\frac1 2}(\Gamma)$ is dense in $H^{-\frac 1 2 }(\Gamma)$, then for any $\theta\in H^{-\frac 1 2}(\Gamma), \|\theta\|_{-\frac 1 2,\Gamma}=1$ and $\varepsilon\in (0,1)$, there exists $\beta_{\theta}\in H^{\frac 1 2}(\Gamma)$ such that
	\begin{align}
		\|\theta-\beta_{\theta}\|_{-\frac 1 2,\Gamma}\le \varepsilon.
	\end{align}
	For any given $\beta_{\theta}\in H^{\frac 1 2}(\Gamma)$, find $v\in V$ such 
	\begin{align*}
		-\nabla\cdot(\mu \nabla v)&=0\qquad \text{in }\Omega,\\
		[\![\mu \nabla v\cdot\bm n]\!]&=0\qquad \text{on }\Gamma,\\
		[\![v]\!]&=\beta_{\theta}\qquad\!\! \text{on }\Gamma.
	\end{align*}
	It holds the stability
	\begin{align}
		\|v_{1}\|_{\Omega_1}+\|v_2\|_{\Omega_2}\le  C\|\beta_\theta\|_{-\frac 1 2,\Gamma}.
	\end{align}\label{beta-thtea-reg}
	Then we have
	\begin{align}
		\begin{split}
			a^\star(u, v)
			&=
			\mu_1\langle u_1,\nabla v_1\cdot\bm n \rangle_{\Gamma}
			-\mu_2\langle  u_2, \nabla v_2\cdot\bm n\rangle_{\Gamma}\\
			&\quad	+\langle \{\!\!\{\mu\nabla u\cdot\bm n\}\!\!\},[\![v]\!] \rangle_{\Gamma}
			-\langle [\![u]\!],\{\!\!\{\mu\nabla v\cdot\bm n\}\!\!\} \rangle_{\Gamma}
			+\langle c_0\h^{-1}[\![u]\!],[\![v]\!] \rangle_{\Gamma}\\
			&=\langle \{\!\!\{\mu\nabla u\cdot\bm n\}\!\!\}+c_0\h^{-1}[\![u]\!],\beta_{\theta} \rangle_{\Gamma}.
		\end{split}
	\end{align}
	So it holds
	\begin{align}
		\langle \{\!\!\{\mu\nabla u\cdot\bm n\}\!\!\}+c_0\h^{-1}[\![u]\!],\beta_{\theta} \rangle_{\Gamma}=(f, v_1)_{\Omega_1}+(f, v_2)_{\Omega_2}
	\end{align}
	Then it holds
	{ 
		\begin{align*}
			&\|\{\!\!\{\mu\nabla u\cdot\bm n\}\!\!\}+c_0\h^{-1}[\![u]\!]\|_{\frac 1 2,\Gamma}\\
			&\qquad =\sup_{\theta\in H^{-\frac 1 2}(\Gamma),\|\theta\|_{-\frac 1 2,\Gamma}=1}\langle\{\!\!\{\mu\nabla u\cdot\bm n\}\!\!\}+c_0\h^{-1}[\![u]\!], \theta \rangle_{\Gamma}\\
			&\qquad=\sup_{\theta\in H^{-\frac 1 2}(\Gamma),\|\theta\|_{-\frac 1 2,\Gamma}=1}\langle \{\!\!\{\mu\nabla u\cdot\bm n\}\!\!\}+c_0\h^{-1}[\![u]\!], \theta -\beta_{\theta}+\beta_{\theta}\rangle_{\Gamma}\\
			&\qquad=\sup_{\theta\in H^{-\frac 1 2}(\Gamma),\|\theta\|_{-\frac 1 2,\Gamma}=1}\left[\langle \{\!\!\{\mu\nabla u\cdot\bm n\}\!\!\}+c_0\h^{-1}[\![u]\!], \theta -\beta_{\theta}\rangle_{\Gamma}
			+(f, v_1)_{\Omega_1}+(f, v_2)_{\Omega_2}\right]
			\\
			&\qquad\le 	\|\{\!\!\{\mu\nabla u\cdot\bm n\}\!\!\}+c_0\h^{-1}[\![u]\!]\|_{\frac 1 2,\Gamma}\varepsilon
			+C(\|f\|_{\Omega_1}
			+\|f\|_{\Omega_2})(1+\varepsilon)
			,
	\end{align*}}
	which leads to
	\begin{align}
		\|\{\!\!\{\mu\nabla u\cdot\bm n\}\!\!\}+c_0\h^{-1}[\![u]\!]\|_{\frac 1 2,\Gamma}\le C\frac{1+\varepsilon}{1-\varepsilon}(\|f\|_{\Omega_1}
		+\|f\|_{\Omega_2}).
	\end{align}
	So
	\begin{align}
		\|\{\!\!\{\mu\nabla u\cdot\bm n\}\!\!\}+c_0\h^{-1}[\![u]\!]\|_{\frac 1 2,\Gamma}\le C(\|f\|_{\Omega_1}
		+\|f\|_{\Omega_2}).
	\end{align}
	Therefore,
	\begin{align*}
		\|
		c_0\h^{-1}[\![u]\!]+
		\mu_1\nabla u_1\cdot\bm n\|_{\frac 1 2,\Gamma}
		&=	\|
		c_0\h^{-1}[\![u]\!]+
		\{\!\!\{\mu\nabla u\cdot\bm n\}\!\!\}+ {w_1}[\![\mu\nabla u\cdot\bm n]\!]\|_{\frac 1 2,\Gamma} \\
		&\le\|
		c_0\h^{-1}[\![u]\!]+
		\{\!\!\{\mu\nabla u\cdot\bm n\}\!\!\}\|_{\frac 1 2,\Gamma} 
		+\|[\![\mu\nabla u\cdot\bm n]\!]\|_{\frac 1 2,\Gamma} \\
		&=\|
		c_0\h^{-1}[\![u]\!]+
		\{\!\!\{\mu\nabla u\cdot\bm n\}\!\!\}\|_{\frac 1 2,\Gamma} \\
		&\le  C\|f\|_0.
	\end{align*}

	\textbf{Step Four, final result:} 
	{ 
		Without loss of generality, we assume that $\mu_2\ge\mu_1$. By \Cref{robin-reg-2}, it holds}
	
	\begin{align*}
		&\mu_1\| u\|_{2,\Omega_1} + 	\mu_2\| u\|_{2,\Omega_2} \\
		&\qquad\le  C{ \max\{\mu_1^{-1},\mu_2^{-1},1\}}(1+\gamma_0^{-1})(\mu_1\|\Delta u_1\|_{\Omega_1}
		+\mu_2\|\Delta u_2\|_{\Omega_2} \\
		&\qquad\quad
		+\|	c_0\h^{-1}[\![u]\!]+
		\mu_1\nabla u_1\cdot\bm n\|_{\frac 1 2,\Gamma}
		+\|[\![\mu\nabla u\cdot\bm n]\!]\|_{\frac 1 2,\Gamma})\\
		&\qquad\le C{ \max\{\mu_1^{-1},\mu_2^{-1},1\}}\|f\|_0.\label{est-jh3}
	\end{align*}
	
	{ When $\mu_2\le\mu_1$, we can construct a new auxiliary problem: \begin{align}
			\begin{split}
				-\nabla\cdot(\mu\nabla u)&=f\qquad\;\; \text{in }\Omega,\\
				\gamma[\![u]\!]+\mu_2\nabla u_2\cdot\bm n&=g_D\qquad \text{on }\Gamma,\\
				[\![\mu\nabla u\cdot\bm n]\!]&=g_N\qquad \text{on }\Gamma,
			\end{split}
		\end{align}
		in \Cref{robin-reg1} and \Cref{robin-reg-2} to achieve the same results.}
	
	We have finished  our proof.
\end{proof}
{ 
	\begin{remark}\label{biharmonic_remark}
		For the existence of $v$ in the proof of Step Two, we can consider the following biharmonic boundary value problem:
		\begin{equation}\label{biharmonic}
			\begin{split}
				-\Delta^2 v_i&=0, \quad\text{in } \Omega_i,\\
				\mu_i\nabla v_i\cdot\bm n&=0, \quad\text{on }\Gamma, \\
				v_i&=\beta_\zeta,\quad\text{on }\Gamma,\\
				v_i&=0,\quad\text{on }\partial\Omega_i.
			\end{split}
		\end{equation}
		We set $v|_{\Omega_i}=v_i$, so the existence of $v$ is derived from the existence of problem \eqref{biharmonic}.
	\end{remark}
	\begin{remark}
		Unlike in \Cref{biharmonic_remark}, where we used the existence of a solution to the biharmonic problem to verify the existence of the required $v$ in the proof of Step Two, in this Remark, we present a constructive method to obtain the $v$ we need. Assume that $\Gamma$ is not circular. If $\Gamma$ is circular, the proof is similar.
		
		Denoting $U_\sigma:=\{x\in\Omega, {\rm dist}(x,\Gamma)<\sigma\}$. And construct a projection $\Pi_i^n: \Omega_i\cap U_\sigma\to \Gamma$ such that for any $x=x_0+t\bm n$:
		\begin{align*}
			\Pi_i^nx=x_0,
		\end{align*}
		where $x\in\Omega_i\cap U_\sigma$, $x_0\in\Gamma$ and $0\le t<\sigma$. Then we set $v_\sigma(x)=\beta_\zeta(\Pi_i^n(x))$. It's easy to see $v_\sigma|_\Gamma=\beta_\zeta$. We define $d_{i}(x):=\|x-\Pi_i^n x\|$ and introduce a smooth function $\phi$ to extend $v_\sigma$ to $\Omega_i$:
		\begin{equation*}
			\phi(t)=\begin{cases}
				{\rm exp}\left(1-\frac{1}{1-(\frac{t}{\sigma})^2}\right), & \text{$|t|<\sigma$},\\
				0 & \text{$|t|\ge\sigma$.}
			\end{cases}
		\end{equation*}
		Setting $v_i(x)=v_\sigma\cdot\phi(d_i(x))=\beta_\zeta(\Pi_i^n(x))\cdot\phi(d_i(x))$, where $x\in\Omega_i\cup\Gamma$. Owing to $\phi(d_i(x))|_\Gamma=1$, we obtain $v_i|_\Gamma=v_\sigma|_\Gamma=\beta_\zeta$. By simple calculation, we can get
		\begin{align*}
			\nabla v_i\cdot\bm n=\phi(d_i(x))(\nabla v_\sigma\cdot\bm n)+v_\sigma\cdot\phi'(\nabla d_i(x)\cdot\bm n).
		\end{align*}
		Because of $\phi'(0)=0$, we can get
		\begin{align*}
			\nabla v_i\cdot\bm n|_\Gamma=\nabla v_\sigma\cdot\bm n|_\Gamma
			=\nabla(\beta_\zeta(\Pi_i^n(x)))\cdot\bm n|_\Gamma
			&=D\beta|_{\Pi_i^n}D\Pi_i^n\cdot\bm n|_\Gamma,
		\end{align*}
		where $D$ is derivative operator.
		
		Consider curve $\gamma(t)=x_0+t\bm n$, where $x_0\in\Gamma$ and $0\le t<\sigma$, to get
		\begin{align*}
			\frac{d}{dt}\Pi_i^n(\gamma(t))|_{t=0}=D\Pi_i^n(x_0)\cdot\gamma'(0)=D\Pi_i^n(x_0)\cdot\bm n=0,
		\end{align*}
		which implies $D\Pi_i^n\cdot\bm n|_\Gamma=0$. Then we conclude that $\nabla v_i\cdot\bm n|_\Gamma=0$. Finally, setting 
		\begin{equation}
			v=\begin{cases}
				v_1, &\text{in }\Omega_1,\\
				v_2, &\text{in }\Omega_2.
			\end{cases}
		\end{equation}
		The $v$ constructed in this way is exactly the one we need in the proof of the \Cref{dual-reg-1}.
	\end{remark}
}

\section{Error estimations}\label{section5}  

\subsection{The Scott-Zhang interpolation}
There exists the following Scott-Zhang interpolation (\cite{MR1011446}) $\mathcal I_h$ from $H^1(\Omega)\to V_h$ and it has the following \emph{local} approximation properties. 

\begin{lemma}[\cite{MR1011446}, (4.1)]\label{SZ} For $u$ is smooth enough and any $T\in \mathcal T_h$, it holds
	\begin{align*}
		\|u- \mathcal I_h u\|_{0, T}+h_T	\|\nabla(u- \mathcal I_h u)\|_{0, T}\le Ch_T^{s+1}|u|_{s+1, \omega(T)}
	\end{align*}
	for $s\in [0,k]$, where $\omega(T)$ is the finite union of domains around the element $T$.
\end{lemma}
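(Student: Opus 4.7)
The plan is to follow the original Scott-Zhang argument, combining three standard ingredients: polynomial preservation of $\mathcal I_h$ on $\mathcal P_k$, local $L^2$ and $H^1$ stability of $\mathcal I_h$ over the patch $\omega(T)$, and the Deny-Lions (Bramble-Hilbert) lemma on a reference patch. The estimate is then obtained by affine scaling.

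First, I would recall the construction of $\mathcal I_h$: to each Lagrange node $x_i$ of the degree-$k$ space one assigns a carrier simplex or face $\sigma_i$ containing $x_i$ (chosen inside $\partial\Omega$ when $x_i \in \partial\Omega$, so that the homogeneous Dirichlet condition is preserved), together with the $L^2$-dual basis $\{\psi_i\} \subset \mathcal P_k(\sigma_i)$ of the nodal basis restricted to $\sigma_i$, and sets $(\mathcal I_h u)(x_i) = \int_{\sigma_i} \psi_i u$. From this two properties follow: (a) $\mathcal I_h q = q$ for every $q \in \mathcal P_k$, and (b) the local stability $\|\mathcal I_h u\|_{0,T} \le C\|u\|_{0,\omega(T)}$ together with an analogous bound for $|\mathcal I_h u|_{1,T}$, with constants independent of $h_T$ by shape regularity. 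Property (b) reduces to the estimate $\|\psi_i\|_{0,\sigma_i} \le C|\sigma_i|^{-1/2}$ (obtained by scaling the dual basis on a reference face) combined with the trace inequality $\|u\|_{0,\sigma_i}^2 \le C(h_T^{-1}\|u\|_{0,\omega(T)}^2 + h_T\|\nabla u\|_{0,\omega(T)}^2)$ of \Cref{trace}.

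Next I would pass to a reference patch. By shape regularity, $\omega(T)$ is the image of a fixed reference patch $\hat\omega$ under an affine map whose Jacobian is of order $h_T^d$, so it suffices to prove
\begin{equation*}
\|\hat u - \hat{\mathcal I}\hat u\|_{0,\hat T} + |\hat u - \hat{\mathcal I}\hat u|_{1,\hat T} \le C|\hat u|_{s+1,\hat\omega}, \qquad \hat u \in H^{s+1}(\hat\omega),
\end{equation*}
after which the standard scalings $\|\cdot\|_{0,T} \sim h_T^{d/2}\|\cdot\|_{0,\hat T}$ and $|\cdot|_{m,T} \sim h_T^{d/2-m}|\cdot|_{m,\hat T}$ reproduce the factor $h_T^{s+1}$ on the right. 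On $\hat\omega$, since $s \le k$, the Deny-Lions lemma provides $\hat p \in \mathcal P_s \subset \mathcal P_k$ with $\|\hat u - \hat p\|_{s+1,\hat\omega} \le C|\hat u|_{s+1,\hat\omega}$. Polynomial preservation gives $\hat u - \hat{\mathcal I}\hat u = (\hat u - \hat p) - \hat{\mathcal I}(\hat u - \hat p)$, and applying the reference-patch stability (b) to $\hat u - \hat p$ bounds both terms on the left by $\|\hat u-\hat p\|_{1,\hat\omega} \le \|\hat u-\hat p\|_{s+1,\hat\omega} \le C|\hat u|_{s+1,\hat\omega}$, which concludes the reference estimate.

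The main obstacle is the local stability bound (b) together with its scaling-robust transfer to $\omega(T)$; in particular, showing that the constant in (b) does not degenerate as $h_T \to 0$ requires the careful scaling of the dual functionals $\psi_i$ on the carriers together with the trace inequality above. Once (b) is in hand, the Deny-Lions/polynomial-preservation step and the affine change of variables are routine and produce the stated bound directly.
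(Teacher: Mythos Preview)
The paper does not prove this lemma: it is stated with the attribution to \cite{MR1011446}, equation~(4.1), and no proof is given. Your outline is essentially the argument from that reference and is correct in its overall structure --- polynomial preservation, local stability of the nodal averaging, and Bramble--Hilbert.

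One technical point worth tightening: the assertion that $\omega(T)$ is the affine image of a single fixed reference patch $\hat\omega$ is not accurate, since the combinatorial structure of $\omega(T)$ (the number and arrangement of neighbouring elements) varies across the mesh and $\omega(T)$ is in general not a simplex. What shape regularity actually provides is either that there are only finitely many patch types up to uniform affine equivalence, or --- more directly --- that each $\omega(T)$ is star-shaped with respect to a ball of radius comparable to $h_T$, with a uniform chunkiness parameter. Either of these suffices for the Deny--Lions/Bramble--Hilbert step and the scaling, but the argument should be phrased through one of them rather than through a single reference patch.
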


\begin{lemma}\label{extend}
	For any $s\ge 1$, $i=1,2$,  $v_i\in H^s(\Omega_i)$, there exsits an extention $\widetilde v_i\in H^s(\Omega)\cap H_0^1(\Omega)$ of $v$ such that
	\begin{align*}
		\widetilde v_i|_{\Omega_i}=v_i,\qquad
		\|\widetilde v_i\|_{s}\le \|v_i\|_{s,\Omega_i}.
	\end{align*}
\end{lemma}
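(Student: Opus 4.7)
The plan is to build the extension by applying a standard Sobolev extension theorem on the subdomain $\Omega_i$ and then multiplying by a smooth cutoff that enforces the homogeneous Dirichlet condition on $\partial\Omega$. Because $\Gamma$ is $C^2$ and $\partial\Omega$ is Lipschitz (convex polygonal), each of the two subdomains $\Omega_1, \Omega_2$ has a Lipschitz boundary, so by the Calderón/Stein extension theorem there is a bounded linear operator $E_i: H^s(\Omega_i) \to H^s(\mathbb{R}^d)$ satisfying $\|E_i v_i\|_{s,\mathbb{R}^d} \le C\|v_i\|_{s,\Omega_i}$ for every $s\ge 1$. (As stated, the inequality in the lemma is missing a constant $C$; this is the constant it needs.)

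For $i=1$ the argument is direct: since $\Gamma$ is an enclosed interior interface, $\overline{\Omega_1} \subset \Omega$, so one can pick $\chi \in C_c^\infty(\Omega)$ with $\chi \equiv 1$ on an open neighborhood of $\overline{\Omega_1}$. Define $\widetilde v_1 := \chi\,(E_1 v_1)|_{\Omega}$. Then $\widetilde v_1|_{\Omega_1}=v_1$ by construction, $\widetilde v_1$ is compactly supported in $\Omega$ so lies in $H_0^1(\Omega)$, and the product rule together with the $W^{s,\infty}$ bound on $\chi$ and the boundedness of $E_1$ gives $\|\widetilde v_1\|_{s} \le C\|v_1\|_{s,\Omega_1}$. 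The case $i=2$ is more delicate because $\Omega_2$ touches $\partial\Omega$, so the $H_0^1(\Omega)$ conclusion forces the implicit reading that the trace of $v_2$ on $\partial\Omega$ vanishes (which is true for the functions from $V$ to which the lemma is later applied). Granted this, one sets $\widetilde v_2 := (E_2 v_2)|_{\Omega}$; it extends $v_2$, inherits a vanishing trace on $\partial\Omega$, and satisfies the norm bound from continuity of $E_2$.

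If one prefers not to rely on the global extension operator $E_2$ respecting the Dirichlet boundary, a cleaner alternative is a partition-of-unity argument localised in a tubular neighborhood $U_\sigma = \{x : \mathrm{dist}(x,\Gamma) < \sigma\}$ of $\Gamma$, using the constructive normal-extension plus cutoff already illustrated in the preceding remark on $\Pi_i^n$ and $\phi$. In the shell $U_\sigma$ one extends $v_i$ across $\Gamma$ via the local Stein extension, in the remainder of $\Omega_i$ one keeps $v_i$ itself, and these are blended by a smooth partition of unity subordinate to the cover $\{U_\sigma,\,\Omega\setminus \overline{U_{\sigma/2}}\}$. The main point to check is that the glued function has no spurious jump across $\Gamma$ and respects the $W^{s,\infty}$-multiplication estimate; this is the only genuinely nontrivial step, and it follows because the two pieces agree with $v_i$ on $\Omega_i \cap U_\sigma$ and the cutoff is smooth. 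All other steps are bookkeeping with the product rule.
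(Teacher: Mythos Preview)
The paper does not supply its own proof of this lemma; it is stated as a known fact and immediately used to define the operator $\mathcal J_h$. So there is no argument in the paper to compare against.

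Your approach via the Stein/Calder\'on extension operator plus a smooth cutoff is the standard one and is correct in spirit. Two of your side observations are worth emphasising because they are genuine issues with the lemma as written. First, the inequality $\|\widetilde v_i\|_s \le \|v_i\|_{s,\Omega_i}$ cannot hold without a constant $C$ on the right-hand side; no extension operator is norm-nonincreasing, so the statement should read $\|\widetilde v_i\|_s \le C\|v_i\|_{s,\Omega_i}$. Second, the conclusion $\widetilde v_i \in H_0^1(\Omega)$ is only attainable if $v_i$ itself has vanishing trace on $\partial\Omega \cap \partial\Omega_i$, which is indeed the case for the functions in $V$ to which the lemma is later applied but is not part of the hypotheses as stated.

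One small caution: you assume $\overline{\Omega_1}\subset\Omega$ and $\partial\Omega\subset\partial\Omega_2$, but the paper does not fix which subdomain is interior to $\Gamma$ (see the case distinctions in the proof of Theorem~\ref{basic-robin}). Your argument goes through verbatim after relabelling, so this is only a matter of presentation.
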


For $v\in V$, we define $\mathcal J_hv\in V_h$ as:
\begin{align*}
	\mathcal J_h v|_{\Omega_i}=\mathcal J_h v_i = (\mathcal I_h \widetilde v_i)|_{\Omega_i}, \qquad i=1,2.
\end{align*}

With the above estimation we are ready to get the following one.
\begin{lemma} \label{est-jh} For $i=1,2$, $v\in V$ being smooth enough, it holds
	\begin{align}
		\|v_i-\mathcal J_hv_i\|_{\Omega_i}+{ h\|\nabla(v_i-\mathcal J_hv_i)\|_{\Omega_i}}&\le Ch^{s+1}\|v_i\|_{s+1,\Omega_i},	\\	
		\|\h^{-\frac 1 2}(v_i-\mathcal J_hv_i)\|_{\Gamma}
		&\le 
		Ch^{s}\|v_i\|_{s+1,\Omega_i}	\label{est-jh-partial},\\
		\|\h^{\frac 1 2}\nabla(v_i-\mathcal J_hv_i)\|_{\Gamma}
		&\le 
		Ch^{\ell}\|v_i\|_{\ell+1,\Omega_i}	 \label{est-jh-partial-2}	
	\end{align}
	for all $s\in [0, k]$ and $\ell\in [1, k]$.
\end{lemma}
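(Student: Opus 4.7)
The plan is to reduce each estimate to standard Scott--Zhang bounds applied to a global extension of $v_i$, then transfer back to $\Omega_i$ (via shape regularity) and to $\Gamma$ (via a trace inequality on cut elements). First I would apply Lemma \ref{extend} to obtain $\widetilde v_i\in H^{s+1}(\Omega)\cap H_0^1(\Omega)$ with $\widetilde v_i|_{\Omega_i}=v_i$ and $\|\widetilde v_i\|_{s+1}\le\|v_i\|_{s+1,\Omega_i}$, so that by the very definition of $\mathcal J_h$ we have $v_i-\mathcal J_hv_i=\widetilde v_i-\mathcal I_h\widetilde v_i$ on $\Omega_i$. Squaring the local bound of Lemma \ref{SZ}, summing over $T\in\mathcal T_h^i$, and exploiting the bounded overlap of the Scott--Zhang patches $\omega(T)$ (a consequence of shape regularity) immediately delivers the first inequality for every $s\in[0,k]$.

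For the interface estimates the key ingredient is the classical $H^1$-trace inequality on cut elements: for each $T\in\mathcal T_h^\Gamma$ and $w\in H^1(T)$,
\begin{align*}
\|w\|_{\Gamma\cap T}^2\le C\bigl(h_T^{-1}\|w\|_T^2+h_T\|\nabla w\|_T^2\bigr),
\end{align*}
which holds under the $C^2$ regularity of $\Gamma$ together with the shape regularity of $\mathcal T_h$. Choosing $w=\widetilde v_i-\mathcal I_h\widetilde v_i$, weighting by $h_T^{-1}$ and summing over $T\in\mathcal T_h^\Gamma$, the bounds already obtained for the first inequality immediately yield \eqref{est-jh-partial}.

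For \eqref{est-jh-partial-2} the restriction $\ell\ge 1$ is used to apply the same trace inequality to $\nabla(\widetilde v_i-\mathcal I_h\widetilde v_i)\in H^1(T)$; this produces a first-order term absorbed by Lemma \ref{SZ} and a second-order term $h_T^2|\widetilde v_i-\mathcal I_h\widetilde v_i|_{2,T}^2$. To control the latter I plan to insert a best polynomial approximation $p\in\mathcal P_k(T)$ from Bramble--Hilbert: the piece $|\widetilde v_i-p|_{2,T}\le Ch_T^{\ell-1}|\widetilde v_i|_{\ell+1,T}$ is direct, while the inverse inequality (Lemma \ref{inverse}) gives $|p-\mathcal I_h\widetilde v_i|_{2,T}\le Ch_T^{-2}\|p-\mathcal I_h\widetilde v_i\|_T$, and the triangle inequality together with Lemma \ref{SZ} control $\|p-\mathcal I_h\widetilde v_i\|_T$. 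Summing over $T\in\mathcal T_h^\Gamma$ then yields \eqref{est-jh-partial-2}.

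The main obstacle is precisely this $H^2$-type control of the Scott--Zhang error on cut elements: Lemma \ref{SZ} as stated only furnishes $H^0$ and $H^1$ bounds, so the second-order derivative has to be produced indirectly through a Bramble--Hilbert detour combined with the inverse inequality. Once that step is in place, the rest of the argument is a routine summation combined with a direct application of the $H^1$-trace inequality on cut elements.
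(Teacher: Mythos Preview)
Your proposal is correct and follows essentially the same route as the paper: reduce to Scott--Zhang on the extension $\widetilde v_i$, use the $H^1$ trace inequality on cut elements for the interface terms, and handle the missing $H^2$ bound by inserting an intermediate polynomial plus the inverse inequality. The only cosmetic difference is that the paper chooses the local $L^2$-projection $\Pi_k^o v_i$ as the inserted polynomial (exploiting $\Pi_k^o(\mathcal J_h v_i)=\mathcal J_h v_i$ and the $L^2$-stability of $\Pi_k^o$), whereas you invoke a generic Bramble--Hilbert best approximant; both yield the same $Ch_T^{\ell-1}|\widetilde v_i|_{\ell+1,\omega(T)}$ control of $|\widetilde v_i-\mathcal I_h\widetilde v_i|_{2,T}$.
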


\begin{proof} 
	We have
	\begin{align*}
		&	\|v_i-\mathcal J_hv_i\|_{\Omega_i}+h\|\nabla(v_i-\mathcal J_hv_i)\|_{\Omega_i}\\
		&\qquad=
		\|\widetilde v_i-\mathcal I_h\widetilde v_i\|_{\Omega_i}+h\|\nabla(\widetilde v_i-\mathcal I_h\widetilde v_i)\|_{\Omega_i}  \\
		&\qquad\le 
		\|\widetilde v_i-\mathcal I_h\widetilde v_i\|_{0}+h\|\nabla(\widetilde v_i-\mathcal I_h\widetilde v_i)\|_{0}\\
		&\qquad\le Ch^{s+1}|\widetilde v_i|_{s+1}\\
		&\qquad \le Ch^{s+1}\|v_i\|_{s+1,\Omega_i}
	\end{align*}
	We use \Cref{SZ}  and  a direct calculation to get
	\begin{align*}
		\|\h^{-\frac 1 2}(v_i-\mathcal J_hv_i)\|_{\Gamma}^2
		&\le C\sum_{E\subset\Gamma\cap T, T\in \mathcal T_h^{\Gamma}}h_T^{-1}\|v_i-\mathcal I_hv_i\|_E^2 \\
		&\le C\sum_{
			\omega=T\cap \Omega_i, 
			T\in \mathcal T_h^{\Gamma}}h_T^{-1}\|v_i-\mathcal I_h {v_i}\|_{\partial \omega}^2 \\
		&\le C\sum_{
			\omega=T\cap \Omega_i, 
			T\in \mathcal T_h^{\Gamma}}(	h_T^{-2}\|v_i- \mathcal J_h v_i\|_{ \omega}^2+	\|\nabla(v_i- \mathcal J_h v_i)\|_{\omega}^2)\\
		&\le C\sum_{
			T\in\mathcal T_h}(	h_T^{-2}\|\widetilde v_i- \mathcal J_h \widetilde v_i\|_{ T}^2+	\|\nabla(\widetilde v_i- \mathcal J_h \widetilde v_i)\|_{T}^2)\\
		&\le  Ch^{2s}|\widetilde v_i|_{s+1}^2\\
		&\le Ch^{2s}\|v_i\|^2_{s+1,\Omega_i}.
	\end{align*}
	Similarity, it holds
	\begin{align*}
		\|\h^{\frac 1 2}\nabla(v_i-\mathcal J_hv_i)\|_{\Gamma}^2
		&=\sum_{E\subset\Gamma\cap T, T\in \mathcal T_h^{\Gamma}}h_E\|\nabla(u-\mathcal J_hu)\|_E^2 \\
		&\le C\sum_{	\omega=T\cap \Omega_i, 
			T\in \mathcal T_h^{\Gamma}}h_T\|\nabla(u-\mathcal J_hu)\|_{\partial  \omega}^2 \\
		&\le C\sum_{	\omega=T\cap \Omega_i, 
			T\in \mathcal T_h^{\Gamma}}h_T\|\nabla(u-\mathcal J_hu)\|_{\partial \omega}^2 \\
		&\le C\sum_{T\in\mathcal T_h}(	\|\nabla(v_i- \mathcal J_h v_i)\|_{0, T}^2+	 {h_T^2\|\nabla^2(v_i- \mathcal J_h v_i)\|_{0, T}^2)}\\
		&\le  Ch^{2\ell}|v_i|_{\ell+1}^2,
	\end{align*}
	where we have used the estimation by the triangle inequality, the $L^2$ bound for $\Pi_k^{o}$, the inverse inequality and the estimation in \Cref{SZ}:
	\begin{align*}
		\|\nabla^2(v_i- \mathcal J_h v_i)\|_{0, T}
		&\le 	|v_i- \Pi_k^ov_i|_{2, T} +|\Pi_k^ov_i- \mathcal J_h v_i|_{2, T}\\
		&\le 	|v_i- \Pi_k^ov_i|_{2, T} +Ch_T^{-2}\|\Pi_k^ov_i- \mathcal J_h v_i\|_{0, T}\\
		&= 	|v_i- \Pi_k^ov_i|_{2, T} +Ch_T^{-2}\|\Pi_k^o(v_i- \mathcal J_h v_i)\|_{0, T}\\
		&\le	Ch_T^{\ell -1}|v_i|_{\ell +1, T} +Ch_T^{-2}\|v_i- \mathcal J_h v_i\|_{0, T}\\
		&\le 	Ch_T^{\ell -1}|v_i|_{\ell +1, \omega(T)}.
	\end{align*}
	
\end{proof}

\begin{lemma} Let $u_h\in V_h$, $u\in V$, $u\in H^{k+1}(\Omega_1)$ and $u\in H^{k+1}(\Omega_2)$, it holds
	\begin{align}
		|a( \mathcal J_h u-u,  \mathcal J_h u-u_h)|
		\le  Ch^k(\mu_1^{\frac 1 2}|u_1|_{k+1,\Omega_1}+\mu_2^{\frac 1 2}|u_2|_{k+1,\Omega_2})\interleave \mathcal J_h u-u_h\interleave.
	\end{align}
\end{lemma}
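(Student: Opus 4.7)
The plan is to split the bilinear form into its five natural pieces and bound each by Cauchy--Schwarz followed by the interpolation estimates in \Cref{est-jh} and the weighting identities in \eqref{por-c0}. Set $\eta:=\mathcal J_h u-u$ and $\xi:=\mathcal J_h u-u_h\in V_h$, so
\begin{align*}
a(\eta,\xi)&=\sum_{i=1}^2\mu_i(\nabla\eta_i,\nabla\xi_i)_{\Omega_i}-\langle\{\!\!\{\mu\nabla\eta\cdot\bm n\}\!\!\},[\![\xi]\!]\rangle_\Gamma\\
&\quad+\langle[\![\eta]\!],\{\!\!\{\mu\nabla\xi\cdot\bm n\}\!\!\}\rangle_\Gamma+\langle c_0\h^{-1}[\![\eta]\!],[\![\xi]\!]\rangle_\Gamma.
\end{align*}
The target right-hand side is $\interleave\xi\interleave$ times $Ch^k\sum_i\mu_i^{1/2}|u_i|_{k+1,\Omega_i}$, so each piece must be written as such a product.

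For the two volume terms, I would just apply Cauchy--Schwarz in $L^2(\Omega_i)$, split $\mu_i=\mu_i^{1/2}\cdot\mu_i^{1/2}$, and invoke the first estimate of \Cref{est-jh} with $s=k$ to control $\|\nabla\eta_i\|_{\Omega_i}\le Ch^k|u_i|_{k+1,\Omega_i}$; the remaining factor $\mu_i^{1/2}\|\nabla\xi_i\|_{\Omega_i}$ is absorbed by $\interleave\xi\interleave$. For the penalty term I factor $c_0^{1/2}\h^{-1/2}$ onto each side; using $c_0\le 2\mu_i$ from \eqref{por-c0} and \eqref{est-jh-partial} with $s=k$ yields $\|c_0^{1/2}\h^{-1/2}[\![\eta]\!]\|_\Gamma\le Ch^k\sum_i\mu_i^{1/2}|u_i|_{k+1,\Omega_i}$, and the other factor contributes to $\interleave\xi\interleave$.

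For the first interface term I pair $c_0^{-1/2}\h^{1/2}$ with $c_0^{1/2}\h^{-1/2}$. Bounding $\|\h^{1/2}\{\!\!\{\mu\nabla\eta\cdot\bm n\}\!\!\}\|_\Gamma\le\sum_i w_i\mu_i\|\h^{1/2}\nabla\eta_i\|_\Gamma$ and exploiting $(w_i\mu_i)/c_0^{1/2}\le (w_i\mu_i)^{1/2}\le\mu_i^{1/2}$ (a direct consequence of $w_i\mu_i\le c_0$), the flux-trace estimate \eqref{est-jh-partial-2} with $\ell=k$ delivers the required $h^k$ rate.

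The genuinely delicate term is the adjoint-inconsistency contribution $\langle[\![\eta]\!],\{\!\!\{\mu\nabla\xi\cdot\bm n\}\!\!\}\rangle_\Gamma$, because it couples an interpolation jump with a discrete flux of the test function. Here I would split as above and handle the discrete side by a combined trace/inverse estimate: for each $T\in\mathcal T_h^\Gamma$, \Cref{trace} applied to $\nabla\xi_i\in\mathcal P_{k-1}(T)^d$ together with \Cref{inverse} yields $h_T\|\nabla\xi_i\|_{\partial T\cap\Gamma}^2\le C\|\nabla\xi_i\|_T^2$. Summing and using $(w_i\mu_i)/c_0^{1/2}\le\mu_i^{1/2}$ once more gives
\begin{align*}
c_0^{-1/2}\|\h^{1/2}\{\!\!\{\mu\nabla\xi\cdot\bm n\}\!\!\}\|_\Gamma\le C\sum_{i=1}^2\mu_i^{1/2}\|\nabla\xi_i\|_{\Omega_i}\le C\interleave\xi\interleave,
\end{align*}
while the other factor $\|c_0^{1/2}\h^{-1/2}[\![\eta]\!]\|_\Gamma$ is already controlled above. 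Summing the five pieces produces the claimed bound. The main obstacle is precisely this last term, since it is the one that demands that the parameter-dependent constants $w_1,w_2,c_0$ have been chosen consistently in \eqref{por-c0} — otherwise the $\mu$-dependence in the final estimate would not close.
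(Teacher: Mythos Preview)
Your proof is correct and follows essentially the same decomposition and term-by-term estimates as the paper: both split $a(\eta,\xi)$ into the five natural pieces, handle the volume and penalty terms via Cauchy--Schwarz plus \Cref{est-jh}, bound the flux-of-$\eta$ term with \eqref{est-jh-partial-2} and the weighting $w_i\mu_i\le c_0$, and treat the flux-of-$\xi$ term by an inverse/trace argument on the discrete function together with \eqref{est-jh-partial} for the jump of $\eta$. One notational slip: where you write $\partial T\cap\Gamma$ you mean $T\cap\Gamma$ (the interface cuts through the element interior), but the intended discrete trace/inverse estimate on the cut is standard and is exactly what the paper invokes.
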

\begin{proof} We use the definition of $a(\cdot,\cdot)$ to get
	\begin{align*}
		&a( \mathcal J_h u-u,  \mathcal J_h u-u_h) \\
		&\quad=\mu_1(\nabla( \mathcal J_h u-u), \nabla (\mathcal J_h u-u_h))_{\Omega_1}
		+ \mu_2(\nabla ( \mathcal J_h u-u), \nabla (\mathcal J_h u-u_h))_{\Omega_2}\\
		&\quad\quad
		-\langle \{\!\!\{\mu\nabla (\mathcal J_h u-u)\cdot\bm n\}\!\!\},[\![\mathcal J_h u-u_h]\!] \rangle_{\Gamma}
		+\langle [\![\mathcal J_h u-u]\!],\{\!\!\{\mu\nabla (\mathcal J_h u-u_h)\cdot\bm n\}\!\!\} \rangle_{\Gamma}\\
		&\quad\quad
		+\langle c_0\h^{-1}[\![\mathcal J_h u-u]\!],[\![\mathcal J_h u-u_h]\!] \rangle_{\Gamma},\\
		&\quad=:\sum_{i=1}^5 E_i.
	\end{align*}
	{ 
		Then using Cauchy-Schwarz inequality and \Cref{est-jh} estimate the first term $E_1+E_2$
		\begin{align}
			\begin{split}
				|E_1+E_2|&\le \mu_1\|\nabla (\mathcal J_h u_1-u_1)\|_{\Omega_1}
				\|\nabla(\mathcal J_h u_1-u_{h, 1})\|_{\Omega_1} \\
				&\quad + \mu_2\|\nabla (\mathcal J_h u_2-u_2)\|_{\Omega_2}
				\|\nabla(\mathcal J_h u_2-u_{h, 2})\|_{\Omega_2} \\
				&\le Ch^k(\mu_1^{\frac 1 2}|u_1|_{ {k+1,\Omega_1}}
				+\mu_2^{\frac 1 2}|u_2|_{ {k+1,\Omega_2}}
				)
				(\mu_1^{\frac 1 2}\|\nabla(\mathcal J_h u_1-u_{h,1})\|_{\Omega_1} \\
				&\quad+\mu_2^{\frac 1 2}\|\nabla(\mathcal J_h u_2-u_{h,2})\|_{\Omega_2}
				).
			\end{split}
	\end{align}}
	
	By notice that $c_0\ge \mu_1w_1$ and $c_0\ge \mu_2w_2$ we can get
	\begin{equation}\label{lemma5.4{}}
		\begin{split}
			&\| c_0^{-\frac 1 2}\h^{\frac 1 2}\{\!\!\{\mu\nabla (\mathcal J_h u-u)\cdot\bm n\}\!\!\}\|_{\Gamma}\\
			&\le \| c_0^{-\frac 1 2}\h^{\frac 1 2}\mu_1w_1\nabla (\mathcal J_h u_1-u_1)\cdot\bm n\|_{\Gamma}
			+\| c_0^{-\frac 1 2}\h^{\frac 1 2}\mu_2w_2\nabla (\mathcal J_h u_2-u_2)\cdot\bm n\|_{\Gamma} \\
			&\le \|\h^{\frac 12}\mu_1^{\frac 1 2}w_1^{\frac 1 2}\nabla (\mathcal J_h u_1-u_1)\cdot\bm n\|_{\Gamma}
			+ \|\h^{\frac 12}\mu_2^{\frac 1 2}w_2^{\frac 1 2}\nabla (\mathcal J_h u_2-u_2)\cdot\bm n\|_{\Gamma} \\
			&\le  Ch^k(\mu_1^{\frac 1 2}|u_1|_{ {k+1,\Omega_1}}
			+\mu_2^{\frac 1 2}|u_2|_{ {k+1,\Omega_2}}
			).
		\end{split}
	\end{equation}

	So {  adding the term $E_3$, applying the Cauchy-Schwarz inequality, \eqref{lemma5.4{}} and \Cref{est-jh}}
	\begin{align}
		\begin{split}
			|E_3|&\le \| c_0^{-\frac 1 2}\h^{\frac 1 2}\{\!\!\{\mu\nabla (\mathcal J_h u-u)\cdot\bm n\}\!\!\}\|_{\Gamma}\|c_0^{\frac 1 2}\h^{-\frac 1 2}[\![\mathcal J_h u-u_h]\!] \|_{\Gamma} \\
			&\le Ch^{k}(\mu_1^{\frac 1 2}|u_1|_{ {k+1,\Omega_1}}
			+\mu_2^{\frac 1 2}|u_2|_{ {k+1,\Omega_2}}
			)\|c_0^{\frac 1 2}\h^{-\frac 1 2}[\![\mathcal J_h u-u_h]\!] \|_{\Gamma}.
		\end{split}
	\end{align}
	{  For the fourth term $E_4$, we similarly get
		\begin{align}
			\begin{split}
				|E_4|&\le \| c_0^{\frac 1 2}\h^{-\frac 1 2}[\![\mathcal J_h u-u]\!]\|_{\Gamma}\|c_0^{-\frac 1 2}\h^{\frac 1 2}\{\!\!\{\mu\nabla (\mathcal J_h u-u_h)\cdot\bm n\}\!\!\}\|_{\Gamma} \\
				&\le C  \| c_0^{\frac 1 2}\h^{-\frac 1 2}[\![\mathcal J_h u-u]\!]\|_{\Gamma}(\mu_1^{\frac 1 2}\|\nabla(\mathcal J_h u_1-u_{h,1})\|_{\Omega_1}
				+\mu_2^{\frac 1 2}\|\nabla(\mathcal J_h u_2-u_{h,2})\|_{\Omega_2}
				)\\
				&\le Ch^k{ (\mu_1^{\frac{1}{2}}|u_1|_{k+1,\Omega_1}
					+\mu_2^{\frac{1}{2}}|u_2|_{k+1,\Omega_2}
					)}
				(\mu_1^{\frac 1 2}\|\nabla(\mathcal J_h u_1-u_{h,1})\|_{\Omega_1} \\
				&\quad+\mu_2^{\frac 1 2}\|\nabla(\mathcal J_h u_2-u_{h,2})\|_{\Omega_2}
				).
			\end{split}
	\end{align}}
	{  With the help of Cauchy-Schwarz inequality, \eqref{est-jh-partial}, the final term is estimated as}
	\begin{align}
		\begin{split}
			|E_5|&\le
			C\|c_0^{\frac 1 2}\h^{-\frac 1 2}[\![\mathcal J_h u-u]\!]\|_{\Gamma}\|c_0^{\frac 1 2}\h^{-\frac 1 2}[\![\mathcal J_h u-u_h]\!]\|_{\Gamma}  \\
			&\le   Ch^k(\mu_1^{\frac 1 2}|u_1|_{ {k+1,\Omega_1}}
			+\mu_2^{\frac 1 2}|u_2|_{ {k+1,\Omega_2}}
			)\|c_0^{\frac 1 2}\h^{-\frac 1 2}[\![\mathcal J_h u-u_h]\!]\|_{\Gamma}.
		\end{split}
	\end{align}
	{  Combining all the above estimates finishes proof.}
\end{proof}

\subsection{Energy  Error Estimation}

\begin{theorem}[Energy error estimation I] Let $u\in V$ and $u_h\in V_h$ be the solution of \eqref{org} and \eqref{fem}, respectively. Then for $u_1\in H^{k+1}(\Omega)$ and $u_2\in H^{k+1}(\Omega)$, we have
	\begin{align*}
		\interleave u-u_h\interleave
		\le Ch^k(\mu_1^{\frac 1 2}|u_1|_{k+1,\Omega_1}+\mu_2^{\frac 1 2}|u_2|_{k+1,\Omega_2}).
	\end{align*}
\end{theorem}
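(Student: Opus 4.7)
The plan is the standard Strang-type argument via the triangle inequality
$$\interleave u - u_h\interleave \le \interleave u - \mathcal J_h u\interleave + \interleave \mathcal J_h u - u_h\interleave,$$
splitting the analysis into a discrete piece $\mathcal J_h u - u_h \in V_h$, to be handled by coercivity plus Galerkin orthogonality, and an interpolation piece $u - \mathcal J_h u$, to be handled directly from the local estimates in \Cref{est-jh}. The central observation is that the hard work, namely the consistency-type bound proved in the preceding lemma, already controls $|a(\mathcal J_h u - u,\mathcal J_h u - u_h)|$ in exactly the form we need, so the main theorem reduces to a two-line corollary once the interpolation error is accounted for.

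For the discrete piece, I set $\xi_h = \mathcal J_h u - u_h \in V_h$ and start from the coercivity identity $a(\xi_h,\xi_h) = \interleave\xi_h\interleave^2$. Galerkin orthogonality from \Cref{orth} gives $a(u - u_h,\xi_h) = 0$, so that
$$\interleave\xi_h\interleave^2 = a(\mathcal J_h u - u_h,\xi_h) = a(\mathcal J_h u - u,\xi_h).$$
Applying the preceding lemma to the right-hand side and dividing by $\interleave\xi_h\interleave$ immediately yields the desired bound for $\interleave \mathcal J_h u - u_h\interleave$ by $C h^k(\mu_1^{1/2}|u_1|_{k+1,\Omega_1} + \mu_2^{1/2}|u_2|_{k+1,\Omega_2})$.

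For the interpolation piece, I expand
$$\interleave u-\mathcal J_h u\interleave^2 = \mu_1\|\nabla(u_1 - \mathcal J_h u_1)\|_{\Omega_1}^2 + \mu_2\|\nabla(u_2 - \mathcal J_h u_2)\|_{\Omega_2}^2 + c_0\|\h^{-1/2}[\![u - \mathcal J_h u]\!]\|_\Gamma^2,$$
and bound each contribution using \Cref{est-jh}. The two gradient terms are immediate. For the jump term I write $[\![u - \mathcal J_h u]\!] = (u_1 - \mathcal J_h u_1) - (u_2 - \mathcal J_h u_2)$ on $\Gamma$ and invoke \eqref{est-jh-partial} on each side; the scalar factor $c_0^{1/2}$ is then absorbed into $\mu_i^{1/2}$ via $c_0 \le 2\min(\mu_1,\mu_2)$ from \eqref{por-c0}, which is precisely what renders the final constant independent of the jump in $\mu$. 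Combining the two pieces delivers the claimed estimate.

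I do not foresee a substantive obstacle. The coefficient-robustness, which is the genuinely delicate feature, has been engineered upstream through the weighted averaging and the specific choice of $c_0$, so that reducing the energy estimate to those ingredients is now routine. The only item worth double-checking in the writeup is the clean distribution of the $c_0^{1/2}$ factor over the two sides of $\Gamma$ so that the jump-penalty bound splits naturally into contributions proportional to $\mu_1^{1/2}|u_1|_{k+1,\Omega_1}$ and $\mu_2^{1/2}|u_2|_{k+1,\Omega_2}$, but \eqref{por-c0} makes this a one-line step.
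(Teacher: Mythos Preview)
Your proposal is correct and follows essentially the same route as the paper: coercivity plus Galerkin orthogonality reduces $\interleave\mathcal J_h u - u_h\interleave$ to the consistency bound of the preceding lemma, and the triangle inequality together with the interpolation estimates of \Cref{est-jh} finishes. The paper is terser---it simply says ``We use the triangle inequality to get the final result'' without spelling out the $c_0$-to-$\mu_i^{1/2}$ absorption for the jump term---but your more explicit treatment of that step via \eqref{por-c0} is exactly what is intended.
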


\begin{proof} We have
	\begin{align*}
		\interleave \mathcal J_h u-u_h\interleave^2
		&= a( \mathcal J_h u-u_h,  \mathcal J_h u-u_h) \\
		&=  a( \mathcal J_h u-u,  \mathcal J_h u-u_h) \\
		&\le Ch^k(\mu_1^{\frac 1 2}\|u_1\|_{k+1,\Omega_1}+\mu_2^{\frac 1 2}\|u_2\|_{k+1,\Omega_2})\interleave \mathcal J_h u-u_h\interleave,
	\end{align*}
	which leads to
	\begin{align}\label{es-en}
		\interleave \mathcal J_h u-u_h\interleave
		\le Ch^k(\mu_1^{\frac 1 2}\|u_1\|_{k+1,\Omega_1}+\mu_2^{\frac 1 2}\|u_2\|_{k+1,\Omega_2}).
	\end{align}
	We use the triangle inequality to get the final result.
\end{proof}

The followoing result is proved in \cite{MR3668542}, with the technique proposed in \cite{MR1941489}.
\begin{theorem}[Energy error estimation II, {\cite[Theorem 4.2 and Remark 5.1]{MR3668542}}] Let $u\in V$ and $u_h\in V_h$ be the solution of \eqref{org} and \eqref{fem}, respectively. Then for $u_1\in H^{k+1}(\Omega)$ and $u_2\in H^{k+1}(\Omega)$, we have
	\begin{align*}
		\mu_1\|\nabla (u_1-u_{h, 1})\|_{\Omega_1}
		+\mu_2\|\nabla(u_2-u_{h,2})\|_{\Omega_2}
		\le Ch^k(\mu_1\|u_1\|_{k+1,\Omega_1}+\mu_2\|u_2\|_{k+1,\Omega_2}).
	\end{align*}
\end{theorem}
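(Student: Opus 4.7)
The plan is to adapt the robust technique of Hansbo and Hansbo \cite{MR1941489} to the non-symmetric Nitsche setting, as done in \cite[Theorem~4.2 and Remark~5.1]{MR3668542}. The robustness in $\mu$ is driven by our specific choice of weights: the identity $w_1\mu_1=w_2\mu_2=c_0/2$ together with $c_0\le 2\min(\mu_1,\mu_2)$ from \eqref{por-c0} lets every cross-interface term be paired so that only clean $\mu_i^{2}$ contributions appear on both sides of the estimate.

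Following the usual route, I would split $u-u_h=\eta+\xi$ with $\eta=u-\mathcal J_h u$ and $\xi=\mathcal J_h u-u_h\in V_h$, and invoke Galerkin orthogonality (\Cref{orth}) to get $a(\xi,v_h)=-a(\eta,v_h)$ for all $v_h\in V_h$. The standard choice $v_h=\xi$ only recovers Energy Estimate~I with $\mu_i^{1/2}$ factors on the right; to upgrade these to $\mu_i$, I would instead test against the discrete function $v_h\in V_h$ defined piecewise by $v_h|_{\Omega_i}=\mu_i\xi_i$, which is admissible because $\mu_i$ is a piecewise constant and $\xi_i$ vanishes on $\partial\Omega_i\setminus\Gamma$. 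The volume part of $a(\xi,v_h)$ is then $\sum_i\mu_i^{2}\|\nabla\xi_i\|_{\Omega_i}^{2}$, producing the desired $\mu_i^{2}$-scaling on the left.

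For the interface contributions to $a(\eta,v_h)$, I would expand $\{\!\!\{\mu\nabla\eta\cdot\bm n\}\!\!\}=(c_0/2)(\nabla\eta_1+\nabla\eta_2)\cdot\bm n$, $\{\!\!\{\mu\nabla v_h\cdot\bm n\}\!\!\}=(c_0/2)(\mu_1\nabla\xi_1+\mu_2\nabla\xi_2)\cdot\bm n$, and split $[\![v_h]\!]=\mu_1\xi_1-\mu_2\xi_2$, $[\![\eta]\!]=\eta_1-\eta_2$. Every resulting piece has the form $(c_0/2)\mu_j\langle\nabla\eta_i\cdot\bm n,\xi_k\rangle_\Gamma$ (or its symmetric counterpart); using $(c_0/2)\mu_j\le\mu_i\mu_j$ together with the trace and inverse inequalities on cut cells in $\mathcal T_h^{\Gamma}$, the local bounds \eqref{est-jh-partial}--\eqref{est-jh-partial-2}, and Young's inequality in the scale-preserving form $\mu_i\mu_j\,ab\le\tfrac{1}{2}(\mu_j^{2}a^{2}+\mu_i^{2}b^{2})$, each such term is controlled by $Ch^{2k}\mu_i^{2}\|u_i\|_{k+1,\Omega_i}^{2}$ plus a small multiple of $\sum_i\mu_i^{2}\|\nabla\xi_i\|_{\Omega_i}^{2}$ that can be absorbed into the left-hand side. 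The hard part is exactly this book-keeping for the non-symmetric term $\langle[\![\eta]\!],\{\!\!\{\mu\nabla v_h\cdot\bm n\}\!\!\}\rangle_{\Gamma}$: one must distribute the factor $\mu_i$ in Young's inequality so that no $\mu_i^{1/2}\mu_j^{1/2}$ cross coupling survives, which is the key localisation identified in \cite{MR1941489}. Taking square roots, summing in $i$, and combining with the triangle inequality and the local interpolation bound $\mu_i\|\nabla\eta_i\|_{\Omega_i}\le Ch^{k}\mu_i\|u_i\|_{k+1,\Omega_i}$ from \Cref{est-jh} then yields the stated estimate.
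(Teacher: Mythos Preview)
The paper does not give its own proof of this theorem; it is merely quoted from \cite[Theorem~4.2 and Remark~5.1]{MR3668542}. So there is nothing to compare your sketch against on the paper's side, and your argument has to stand on its own.

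There is a genuine gap. Your ``weighted test function'' $v_h|_{\Omega_i}=\mu_i\xi_i$ does give the volume contribution $\sum_i\mu_i^{2}\|\nabla\xi_i\|_{\Omega_i}^{2}$, but the \emph{interface} contributions of $a(\xi,v_h)$ do not disappear and are not sign-definite, and you never treat them. With $v_h=\xi$ the two Nitsche flux terms cancel exactly; with $v_h|_{\Omega_i}=\mu_i\xi_i$ a direct computation using $w_1\mu_1=w_2\mu_2=c_0/2$ gives
\[
-\langle\{\!\!\{\mu\nabla\xi\cdot\bm n\}\!\!\},[\![v_h]\!]\rangle_{\Gamma}
+\langle[\![\xi]\!],\{\!\!\{\mu\nabla v_h\cdot\bm n\}\!\!\}\rangle_{\Gamma}
=\tfrac{c_0}{2}(\mu_2-\mu_1)\bigl(\langle\nabla\xi_1\cdot\bm n,\xi_2\rangle_{\Gamma}+\langle\nabla\xi_2\cdot\bm n,\xi_1\rangle_{\Gamma}\bigr),
\]
and the penalty term becomes
\[
\langle c_0\h^{-1}[\![\xi]\!],[\![v_h]\!]\rangle_{\Gamma}
=\mu_1\mu_2\|\h^{-1/2}[\![\xi]\!]\|_{\Gamma}^{2}
+\tfrac{c_0}{2}(\mu_1-\mu_2)\langle\h^{-1}[\![\xi]\!],\xi_1+\xi_2\rangle_{\Gamma}.
\]
Both residual pieces carry the large factor $\tfrac{c_0}{2}|\mu_1-\mu_2|$ (of order $\max(\mu_1,\mu_2)$ when the contrast is large) and involve the traces $\|\h^{-1/2}\xi_i\|_{\Gamma}$ of $\xi_i$ itself, not of $\nabla\xi_i$ or of $[\![\xi]\!]$. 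There is no discrete trace/inverse bound that converts $\|\h^{-1/2}\xi_i\|_{\Gamma}$ into $\|\nabla\xi_i\|_{\Omega_i}$ with a uniform constant, so these terms cannot be absorbed into $\sum_i\mu_i^{2}\|\nabla\xi_i\|_{\Omega_i}^{2}$ as you propose. In short, the very cancellation that makes the non-symmetric form coercive is destroyed by your rescaled test function, and you have not shown how to recover it.
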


	\subsection{$L^2$ error estiamtion}
	
	Before we give the theorem for the $L^2$ error estimation, we first give a estimation for $a(\cdot,\cdot)$.
	\begin{lemma}\label{lem:dual} Let $u\in V$ and $u_h\in V_h$ be the solution of \eqref{org} and \eqref{fem}, respectively. Then for $u_1\in H^{k+1}(\Omega_1)$, $u_2\in H^{k+1}(\Omega_2)$, $\Phi\in V$, $\Phi_1\in H^2(\Omega_1)$
		and $\Phi_2\in H^2(\Omega_2)$, we have
		{ 
			\begin{align*}
				&|a(u-u_h,\Phi-\mathcal J_h\Phi)|
				\\&\le C          h^{k+1}
				(\mu_1^{\frac 1 2}|u_1|_{k+1,\Omega_1}+\mu_2^{\frac 1 2}|u_2|_{k+1,\Omega_2}) (\mu_1^{\frac 1 2}|\Phi_1|_{2,\Omega_1}+
				\mu_2^{\frac 1 2}|\Phi_2|_{2,\Omega_2}
				) .
		\end{align*}}
	\end{lemma}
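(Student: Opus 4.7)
The plan is to expand $a(u-u_h,\Phi-\mathcal J_h\Phi)$ into the five constituent terms of $a(\cdot,\cdot)$ and bound each of them by $Ch^{k+1}$ times the required seminorm product. In every piece the factor $h^k$ will come from $u-u_h$ (via either Lemma \ref{est-jh} or the energy estimate \eqref{es-en}) while the factor $h$ will come from the $H^2$-approximation properties of $\Phi-\mathcal J_h\Phi$. Robustness in $\mu$ will be secured at every step by the bounds $c_0\le 2\min(\mu_1,\mu_2)$ and $w_i\mu_i\le c_0$ from \eqref{por-c0}.

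The two volume terms are handled by Cauchy--Schwarz together with \eqref{es-en} and Lemma \ref{est-jh} with $s=1$. The penalty term $\langle c_0\h^{-1}[\![u-u_h]\!],[\![\Phi-\mathcal J_h\Phi]\!]\rangle_\Gamma$ is estimated by $\interleave u-u_h\interleave$ times $\|c_0^{1/2}\h^{-1/2}[\![\Phi-\mathcal J_h\Phi]\!]\|_\Gamma$, where the second factor is controlled via \eqref{est-jh-partial} with $s=1$ together with $c_0^{1/2}\le\sqrt{2}\min(\mu_1,\mu_2)^{1/2}$. The symmetric cross term $\langle[\![u-u_h]\!],\{\!\!\{\mu\nabla(\Phi-\mathcal J_h\Phi)\cdot\bm n\}\!\!\}\rangle_\Gamma$ is treated analogously, the flux factor being bounded by rerunning the calculation in \eqref{lemma5.4{}} on $\Phi$ with $\ell=1$.

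The hard part is the remaining term
$$F_3 := -\langle\{\!\!\{\mu\nabla(u-u_h)\cdot\bm n\}\!\!\},[\![\Phi-\mathcal J_h\Phi]\!]\rangle_\Gamma,$$
because $u-u_h$ is not a polynomial (so the trace/inverse machinery does not apply directly to it) and $u$ has only piecewise $H^{k+1}$ regularity. I would split $u-u_h=(u-\mathcal J_h u)+(\mathcal J_h u-u_h)$. For the interpolation part, the derivation behind \eqref{lemma5.4{}} with $\ell=k$ already yields
$$\|c_0^{-1/2}\h^{1/2}\{\!\!\{\mu\nabla(u-\mathcal J_h u)\cdot\bm n\}\!\!\}\|_\Gamma\le Ch^k(\mu_1^{1/2}|u_1|_{k+1,\Omega_1}+\mu_2^{1/2}|u_2|_{k+1,\Omega_2}).$$
For the polynomial part $\mathcal J_h u-u_h$, I would apply Lemma \ref{trace} element-by-element on $\mathcal T_h^\Gamma$ and eliminate the $h\|\nabla^2\cdot\|$ piece with the inverse inequality of Lemma \ref{inverse}, to obtain a bound by $\sum_i\mu_i^{1/2}\|\nabla(\mathcal J_h u_i-u_{h,i})\|_{\Omega_i}$, the identity $(w_i\mu_i)^{1/2}\le c_0^{1/2}$ being the crucial point that keeps the constant $\mu$-independent. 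Invoking \eqref{es-en} turns this into $Ch^k(\mu_1^{1/2}|u_1|_{k+1,\Omega_1}+\mu_2^{1/2}|u_2|_{k+1,\Omega_2})$, and pairing it with the $h$ bound on $\|c_0^{1/2}\h^{-1/2}[\![\Phi-\mathcal J_h\Phi]\!]\|_\Gamma$ closes the estimate for $F_3$. Summing the five contributions gives the claimed inequality.
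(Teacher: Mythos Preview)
Your proposal is correct and follows essentially the same route as the paper: expand $a(u-u_h,\Phi-\mathcal J_h\Phi)$ into its five terms, handle the volume and penalty terms via Cauchy--Schwarz plus \eqref{es-en} and Lemma~\ref{est-jh}, and for the difficult flux term $E_3$ split $u-u_h=(u-\mathcal J_h u)+(\mathcal J_h u-u_h)$, using \eqref{est-jh-partial-2} on the interpolation piece and the trace/inverse inequality on the discrete piece before invoking \eqref{es-en}. The $\mu$-robustness via $w_i\mu_i\le c_0$ and $c_0\le 2\min(\mu_1,\mu_2)$ is also exactly the mechanism the paper uses.
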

	\begin{proof} We use the definition of $a(\cdot,\cdot)$ to get
		\begin{align*}
			&a(u-u_h,\Phi-\mathcal J_h\Phi)\\
			&\quad=\mu_1(\nabla (u_1-u_{h,1}), \nabla (\Phi_1-\mathcal J_h\Phi_1) )_{\Omega_1}
			+ \mu_2(\nabla (u_2-u_{h,2}), \nabla (\Phi_2-\mathcal J_h\Phi_2))_{\Omega_2}\\
			&\qquad
			-\langle \{\!\!\{\mu\nabla (u-u_h)\cdot\bm n\}\!\!\},[\![\Phi-\mathcal J_h \Phi]\!] \rangle_{\Gamma}
			+\langle [\![u-u_h]\!],\{\!\!\{\mu\nabla (\Phi-\mathcal J_h\Phi)\cdot\bm n\}\!\!\} \rangle_{\Gamma}\\
			&\qquad
			+\langle c_0\h^{-1}[\![u-u_h]\!],[\![\Phi-\mathcal J_h\Phi]\!] \rangle_{\Gamma}\\
			&\quad=:\sum_{i=1}^5 E_i
		\end{align*}
		We estimate $E_i$ term by term.
		We use the  {Cauchy}- {Schwarz} inequality and the approximation property in \Cref{est-jh} to get 
		\begin{align*}
			\begin{split}
				|E_1+E_2|&\le \mu_1\|\nabla(u_1-u_{h,1})\|_{\Omega_1}
				\|\nabla(\Phi_1-\mathcal J_h\Phi_1)\|_{\Omega_1}\\
				&\quad
				+\mu_2
				\|\nabla(u_2-u_{h,2})\|_{ {\Omega_2}}
				\|\nabla(\Phi_2-\mathcal J_h\Phi_2)\|_{\Omega_2}\\
				&\le C          h^{k+1}
				(\mu_1^{\frac 1 2}|u_1|_{k+1,\Omega_1}+\mu_2^{\frac 1 2}|u_2|_{k+1,\Omega_2}) (\mu_1^{\frac 1 2}|\Phi_1|_{2,\Omega_1}+
				\mu_2^{\frac 1 2}|\Phi_2|_{2,\Omega_2}
				) 
			\end{split}
		\end{align*}
		We use the fact $w_1\mu_1+w_2\mu_2=c_0$ to get
		\begin{align*}
			&\|c_0^{-\frac 1 2}\h^{\frac 1 2} \{\!\!\{\mu\nabla (u-u_h)\cdot\bm n\}\!\!\}\|_{\Gamma}\\
			&\qquad\le 
			\|c_0^{-\frac 1 2}\h^{\frac 1 2} w_1\mu_1\nabla (u_1-u_{h, 1})\cdot\bm n\|_{\Gamma}
			+\|c_0^{-\frac 1 2}\h^{\frac 1 2} w_2\mu_2\nabla (u_2-u_{h,2})\cdot\bm n\|_{\Gamma}\\
			&\qquad\le 
			\|\h^{\frac 1 2}\mu_1^{\frac 1 2}\nabla (u_1-u_{h, 1})\|_{\Gamma}
			+\|\h^{\frac 1 2} \mu_2^{\frac 12}\nabla (u_2-u_{h,2})\|_{\Gamma}\\
		\end{align*}
		We use the triangle inequality, 
		the inverse inequality,
		the estimation \eqref{es-en} and the approximation property for $\mathcal J_h$ in \Cref{est-jh} to get
		\begin{align*}
			&\|c_0^{-\frac 1 2}\h^{\frac 1 2} \{\!\!\{\mu\nabla (u-u_h)\cdot\bm n\}\!\!\}\|_{\Gamma}\\
			&\qquad\le 
			\|\h^{\frac 1 2} \mu_1^{\frac 1 2}\nabla (u_1-\mathcal J_h u_1)\|_{\Gamma}
			+\|\h^{\frac 1 2} \mu_2^{\frac 12}\nabla (u_2-\mathcal J_h u_2)\|_{\Gamma}\\
			&\quad \qquad +
			\|\h^{\frac 1 2} \mu_1^{\frac 1 2}\nabla (\mathcal J_h u_1-u_{h, 1})\|_{\Gamma}
			+\|\h^{\frac 1 2} \mu_2^{\frac 12}\nabla (\mathcal I_hu_2-u_{h,2})\|_{\Gamma}\\
			&\qquad\le 
			\|\h^{\frac 1 2} \mu_1^{\frac 1 2}\nabla (u_1-\mathcal J_h u_1)\|_{\Gamma}
			+\|\h^{\frac 1 2} \mu_2^{\frac 12}\nabla (u_2-\mathcal J_h u_2)\|_{\Gamma}\\
			&\quad \qquad +
			C\|\mu_1^{\frac 1 2}\nabla (\mathcal J_h u_1-u_{h, 1})\|_{\Omega_1}
			+C\|\mu_2^{\frac 12}\nabla (\mathcal I_hu_2-u_{h,2})\|_{\Omega_2}\\
			&\qquad\le Ch^k(\mu_1^{\frac 1 2}|u_1|_{k+1,\Omega_1}+\mu_2^{\frac 1 2}|u_2|_{k+1,\Omega_2}).
		\end{align*}
		With the above estimation, we use the  {Cauchy}- {Schwarz} inequality , the approximation property for $\mathcal J_h$ in \Cref{est-jh}  and the fact
		$c_0\le 2\min(\mu_1,\mu_2)$
		to get 	
		\begin{align*}
			\begin{split}
				|E_3|&\le \|c_0^{-\frac 1 2}\h^{\frac 1 2} \{\!\!\{\mu\nabla (u-u_h)\cdot\bm n\}\!\!\}\|_{\Gamma}\|c_0^{\frac 1 2}\h^{-\frac 1 2}[\![\Phi-\mathcal J_h \Phi]\!]\|_{\Gamma} \\
				&\le C          h^{k+1}
				(\mu_1^{\frac 1 2}|u_1|_{k+1,\Omega_1}+\mu_2^{\frac 1 2}|u_2|_{k+1,\Omega_2}) (\mu_1^{\frac 1 2}|\Phi_1|_{2,\Omega_1}+
				\mu_2^{\frac 1 2}|\Phi_2|_{2,\Omega_2}
				) .
			\end{split}
		\end{align*}
		Similarity, we can get
		\begin{align}
			\begin{split}
				|E_4|&\le \|c_0^{\frac 1 2}\h^{-\frac 1 2}[\![u-u_h]\!]\|_{\Gamma}
				\|c_0^{-\frac 1 2}\{\!\!\{\mu\nabla (\Phi-\mathcal J_h\Phi)\cdot\bm n\}\!\!\}\|_{\Gamma}\\
				&\le C          h^{k+1}
				(\mu_1^{\frac 1 2}|u_1|_{k+1,\Omega_1}+\mu_2^{\frac 1 2}|u_2|_{k+1,\Omega_2}) (\mu_1^{\frac 1 2}|\Phi_1|_{2,\Omega_1}+
				\mu_2^{\frac 1 2}|\Phi_2|_{2,\Omega_2}
				) .
			\end{split}
		\end{align}
		and
		\begin{align}
			\begin{split}
				|E_5|&\le \|c_0^{-\frac 1 2}\h^{-\frac 1 2}[\![u-u_h]\!]\|_{\Gamma}
				\|c_0^{-\frac 1 2}\h^{-\frac 1 2}[\![\Phi-\mathcal J_h\Phi]\!]\|_{\Gamma}\\
				&\le C          h^{k+1}
				(\mu_1^{\frac 1 2}|u_1|_{k+1,\Omega_1}+\mu_2^{\frac 1 2}|u_2|_{k+1,\Omega_2}) (\mu_1^{\frac 1 2}|\Phi_1|_{2,\Omega_1}+
				\mu_2^{\frac 1 2}|\Phi_2|_{2,\Omega_2}
				) .
			\end{split}
		\end{align}
		We use all the estimations above to get finish our proof.
	\end{proof}

	\begin{theorem}[$L^2$ error estimation]\label{a>=1}  Let $u\in V$ and $u_h\in V_h$ be the solution of \eqref{org} and \eqref{fem}, respectively. Then for $u_1\in H^{k+1}(\Omega)$ and $u_2\in H^{k+1}(\Omega)$, we have
		\begin{align*}
			\| u-u_h\|_0
			\le Ch^{k+1}{  \max(\mu_1^{-\frac 1 2},\mu_1^{-\frac{3}{2}},\mu_2^{-\frac 1 2},\mu_2^{-\frac{3}{2}})}
			(\mu_1^{\frac 1 2}|u_1|_{k+1,\Omega_1}+\mu_2^{\frac 1 2}|u_2|_{k+1,\Omega_2}).
		\end{align*}
		{ In particular, if $\mu_1,\mu_2\ge1$, we have
			\begin{align*}
				\| u-u_h\|_0
				\le Ch^{k+1}\max(\mu_1^{-\frac 1 2},\mu_2^{-\frac 1 2})
				(\mu_1^{\frac 1 2}|u_1|_{k+1,\Omega_1}+\mu_2^{\frac 1 2}|u_2|_{k+1,\Omega_2}).
		\end{align*}}
	\end{theorem}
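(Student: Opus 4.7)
\medskip
\noindent\textbf{Proof proposal.} The plan is to run an Aubin--Nitsche duality argument using the tailored adjoint form $a^{\star}(\cdot,\cdot)$ that the author has already set up so as to restore adjoint consistency. First, I would introduce the dual problem: find $\Phi\in V$ such that
\begin{align*}
a^{\star}(\Phi, v) = (u-u_h, v)\qquad\forall v\in V.
\end{align*}
This is exactly the dual problem \eqref{aux} with right-hand side $f=u-u_h\in L^2(\Omega)$, so \Cref{dual-reg-1} gives the regularity
\begin{align*}
\mu_1\|\Phi\|_{2,\Omega_1}+\mu_2\|\Phi\|_{2,\Omega_2}\le C\max\{\mu_1^{-1},\mu_2^{-1},1\}\,\|u-u_h\|_0.
\end{align*}

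Next, I would test with $v=u-u_h$, which is admissible since $u,u_h\in V$. Using the identity $a(v,w)=a^{\star}(w,v)$ from \eqref{sys}, this yields
\begin{align*}
\|u-u_h\|_0^2 = a^{\star}(\Phi, u-u_h) = a(u-u_h, \Phi).
\end{align*}
The Galerkin orthogonality of \Cref{orth} then lets me subtract $\mathcal J_h\Phi\in V_h$:
\begin{align*}
\|u-u_h\|_0^2 = a(u-u_h, \Phi-\mathcal J_h\Phi).
\end{align*}
This is precisely the bilinear quantity estimated in \Cref{lem:dual}, which delivers
\begin{align*}
\|u-u_h\|_0^2 \le C h^{k+1}(\mu_1^{\frac12}|u_1|_{k+1,\Omega_1}+\mu_2^{\frac12}|u_2|_{k+1,\Omega_2})(\mu_1^{\frac12}|\Phi_1|_{2,\Omega_1}+\mu_2^{\frac12}|\Phi_2|_{2,\Omega_2}).
\end{align*}

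Finally, I would convert the $\Phi$-seminorm factor using the dual regularity above. Writing $\mu_i^{\frac12}|\Phi_i|_{2,\Omega_i}=\mu_i^{-\frac12}\cdot\mu_i|\Phi_i|_{2,\Omega_i}$ and applying \Cref{dual-reg-1} gives
\begin{align*}
\mu_1^{\frac12}|\Phi_1|_{2,\Omega_1}+\mu_2^{\frac12}|\Phi_2|_{2,\Omega_2}\le C\max(\mu_1^{-\frac12},\mu_2^{-\frac12})\,\max\{\mu_1^{-1},\mu_2^{-1},1\}\,\|u-u_h\|_0,
\end{align*}
and the product of the two maxima collapses to $\max(\mu_1^{-\frac12},\mu_1^{-\frac32},\mu_2^{-\frac12},\mu_2^{-\frac32})$. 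Dividing by $\|u-u_h\|_0$ yields the stated estimate; the case $\mu_1,\mu_2\ge 1$ follows by simply dropping the $\mu_i^{-\frac32}$ terms. The only real ``work'' here is the careful $\mu$-bookkeeping in this last combination step; all the heavy lifting (the adjoint-consistent dual form, the $\mu$-uniform dual regularity, and the interpolation-by-parts analysis of $a(u-u_h,\Phi-\mathcal J_h\Phi)$) has been packaged into \eqref{sys}, \Cref{dual-reg-1} and \Cref{lem:dual}, so no additional obstacle is expected in the argument itself.
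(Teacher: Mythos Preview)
Your proposal is correct and follows exactly the paper's own proof: define the dual problem $a^\star(\Phi,v)=(u-u_h,v)$, invoke \Cref{dual-reg-1} for the regularity of $\Phi$, use \eqref{sys} and Galerkin orthogonality to reduce to $a(u-u_h,\Phi-\mathcal J_h\Phi)$, apply \Cref{lem:dual}, and finish with the $\mu$-bookkeeping you describe. Your final combination of the two maxima into $\max(\mu_1^{-1/2},\mu_1^{-3/2},\mu_2^{-1/2},\mu_2^{-3/2})$ is in fact spelled out more explicitly than in the paper.
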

	\begin{proof} We define the dual problem: Find $\Phi\in V$ such that
		\begin{align}\label{dual}
			a^\star(\Phi, v)
			=(u-u_h, v)
		\end{align}
		holds for all $v\in V$. Then by \Cref{dual-reg-1}, the above problem has a unique solution $u\in V$ satisfying
		\begin{align}\label{dualreg}
			\mu_1\|\Phi_1\|_{2,\Omega_1}+\mu_2\|\Phi_2\|_{2,\Omega_2}\le  C{ \max\{\mu_1^{-1},\mu_2^{-1},1\}}\|u-u_h\|_0.
		\end{align}
		We take $v=u-u_h$ in \eqref{dual} ,  the property \eqref{sys} and  the orthogonality in \Cref{orth} to get
		\begin{align*}
			\|u-u_h\|_0^2 =	a^\star(\Phi, u-u_h) 
			=a(u-u_h,\Phi)
			=a(u-u_h,\Phi-\mathcal J_h\Phi).
		\end{align*}
		We use the result in \Cref{lem:dual}  and the regularity estimation \eqref{dualreg} to get
		\begin{align*} 
			&\|u-u_h\|_0^2            \\
			&\le         C h^{k+1}
			(\mu_1^{\frac 1 2}|u_1|_{k+1,\Omega_1}+\mu_2^{\frac 1 2}|u_2|_{k+1,\Omega_2}) (\mu_1^{\frac 1 2}|\Phi_1|_{2,\Omega_1}+
			\mu_2^{\frac 1 2}|\Phi_2|_{2,\Omega_2}
			)\\
			&
			\le         C h^{k+1}
			\max(\mu_1^{-\frac 1 2},\mu_2^{-\frac 1 2})
			(\mu_1^{\frac 1 2}|u_1|_{k+1,\Omega_1}+\mu_2^{\frac 1 2}|u_2|_{k+1,\Omega_2}) (\mu_1\|\Phi_1\|_{2,\Omega_1}+
			\mu_\|\Phi_2\|_{2,\Omega_2}
			)\\
			&\le 
			C          h^{k+1}
			\max(\mu_1^{-\frac 1 2},\mu_2^{-\frac 1 2}){ \max(\mu_1^{-1},\mu_2^{-1},1)}
			(\mu_1^{\frac 1 2}|u_1|_{k+1,\Omega_1}+\mu_2^{\frac 1 2}|u_2|_{k+1,\Omega_2}) \|u-u_h\|_0\\
			&={ C          h^{k+1}
				\max(\mu_1^{-\frac 1 2},\mu_1^{-\frac{3}{2}},\mu_2^{-\frac 1 2},\mu_2^{-\frac{3}{2}})
				(\mu_1^{\frac 1 2}|u_1|_{k+1,\Omega_1}+\mu_2^{\frac 1 2}|u_2|_{k+1,\Omega_2}) \|u-u_h\|_0}.
		\end{align*}
	\end{proof}
	
	\section{Error estimations for penalty-free}	
	In this section, we mainly discuss an optimal $L^2$-norm estimate for \Cref{fem-org} in the penalty-free case.
	
	We define 
	\begin{align}
		\begin{split}
			a_0(u, v)
			&=\mu_1(\nabla u_1, \nabla v_1)_{\Omega_1}
			+ \mu_2(\nabla u_2, \nabla v_2)_{\Omega_2}\\
			&\quad
			-\langle \{\!\!\{\mu\nabla u\cdot\bm n\}\!\!\},[\![v]\!] \rangle_{\Gamma}
			+\langle [\![u]\!],\{\!\!\{\mu\nabla v\cdot\bm n\}\!\!\} \rangle_{\Gamma},
		\end{split}
	\end{align}
	and
	\begin{align}
		\ell_0(v)=(f, v)+\langle  g_N, w_2v_1+w_1v_2\rangle_{\Gamma}
		+\langle
		g_D,\{\!\!\{\mu\nabla v\cdot\bm n\}\!\!\}
		\rangle_{\Gamma}.
	\end{align}
	Then by \Cref{lem:por-averge} we know that, the  solution of \eqref{org} are also the solution of the follow problem:  Find $u\in V$ such that
	\begin{align}\label{fem-org-penaltyfree}
		a_0(u, v)= \ell_0(v)
	\end{align}
	holds for all $v\in V$.

	{ 
		The finite element method reads: Find $u_h\in V_h$ such that
		\begin{align} \label{fem-penalty-free}
			a_0(u_h, v_h)+s_h(u_h,v_h)=\ell_0(v_h),
		\end{align}
		where the operator $s_h$ is the ghost penalty \cite{MR2738930}, defined as
		\begin{align*}
			s_h(u_h,v_h)&=\gamma_g\sum_{i=1}^2\sum_{E\in\mathfrak{E}^i_\Gamma}\sum_{l=1}^{k}\langle\mu_i\h^{2l-1}[\![D_{n_E}^l u_h^i]\!],[\![D_{n_E}^l v_h^i]\!]\rangle_E.
		\end{align*}
		This penalisation allows the condition number of the matrix can be independent of how the domain boundary intersects the computational mesh. The sets $\mathfrak{E}^i_\Gamma$ for $i=1,2$ is defined as 
		\begin{align*}
			\mathfrak{E}^i_\Gamma&=\{ E\subset\partial T |T\in \mathcal{T}_h^\Gamma, E\cap\Omega_i\neq\emptyset\}.
		\end{align*}
		Here $D_{n_E}^l$ is the partial derivative of order $l$ in the direction $n_E$ and we assume $\gamma_g=\mathcal{O}(1)$.
		
		It's easy to get the orthogonality
		\begin{align}\label{a_0or}
			a_0(u-u_h,v_h)-s_h(u_h,v_h)=0,\qquad s_h(u,v_h)=0,
		\end{align}
		for all $v_h\in V_h$. For any $v\in V$, we define the norm
		\begin{align*}
			\interleave v\interleave_*^2&=\mu_1\|\nabla v_1\|_{\Omega_1}^2
			+\mu_2\|\nabla v_2\|_{\Omega_2}^2
			+c_0\|\h^{-\frac 1 2}[\![v]\!]\|_{\Gamma}^2+s_h(v,v),\\
			\|v\|_*^2&=\interleave v\interleave_*^2+c_0^{-1}\|\h^{\frac{1}{2}}\{\!\!\{\mu\nabla v\cdot\bm n\}\!\!\}\|_\Gamma^2.
		\end{align*}
		Using the definition of $a(\cdot, \cdot)$ and the inverse inequality, we can prove that the bilinear form $a_0(\cdot, \cdot)$ is continuous with respect to the above norms. For any $u_h, v_h \in V_h$, it holds that
		\begin{align*}
			|a_0(u_h,v_h)|&\le C(
			\mu_1\|\nabla u_{h, 1}\|_{\Omega_1}\|\nabla v_{h,1}\|_{\Omega_1} +
			\mu_2\|\nabla u_{h, 2}\|_{\Omega_2}\|\nabla v_{h,2}\|_{\Omega_2}
			\\&\quad+c_0^{-\frac{1}{2}}\|\h^{\frac 1 2}\{\!\!\{\mu\nabla u_h\cdot\bm n\}\!\!\}\|_{\Gamma}c_0^{\frac{
					1
				}{2}}\|\h^{-\frac{1}{2}}[\![v_h]\!]\|_\Gamma\\
			&\quad+(\sum_{i=1}^2(w_i\mu_i)^{\frac{1}{2}}\|\nabla v_{h,i}\|_{\Omega_i})c_0^{\frac{1}{2}}\|\h^{-\frac 1 2}[\![u_h]\!]\|_{\Gamma}\\
			&\le M	\| u_h\|_* \cdot	\interleave v_h\interleave_*.
		\end{align*}
	}
	\begin{definition} For $u, v\in V$, we define the adjoint form of $a_0$ as
		\begin{align}
			\begin{split}
				a^\star_0(u, v)
				&=\mu_1(\nabla u_1, \nabla v_1)_{\Omega_1}
				+ \mu_2(\nabla u_2, \nabla v_2)_{\Omega_2}\\
				&\quad
				+\langle \{\!\!\{\mu\nabla u\cdot\bm n\}\!\!\},[\![v]\!] \rangle_{\Gamma}
				-\langle [\![u]\!],\{\!\!\{\mu\nabla v\cdot\bm n\}\!\!\} \rangle_{\Gamma}.
			\end{split}
		\end{align}
		It is straightforward to verify that the following relation holds: 
		\begin{align}\label{sys-pfree}
			a_0(v ,u)=a^{\star}_0(u, v).
		\end{align}
	\end{definition}
	
	\subsection{Stability}
	In this subsection, we will present the stability of the bilinear form $a_0(\cdot,\cdot)$. 
	
	\begin{theorem}
		\label{thm:a0lbb}
		There exists a positive constant $\beta > 0$ such that for any given $u \in V$ the following inf-sup conditions hold
		\begin{align}
			\sup_{0\neq v\in V} \frac{a_0^{\star}(u, v)}{\|v\|_1}\ge C\|u\|_1, \label{con-infsup-1}\\
			\sup_{0\neq v\in V} \frac{a_0(u, v)}{\|v\|_1}\ge C\|u\|_1.\label{con-infsup-2}
		\end{align}
	\end{theorem}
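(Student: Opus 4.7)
The plan is to prove both inf-sup conditions by an explicit test-function construction. Since $a_0(v,u) = a_0^{\star}(u,v)$, the inequalities \eqref{con-infsup-1} and \eqref{con-infsup-2} are dual in nature and admit parallel arguments; I describe the proof of \eqref{con-infsup-1} in detail, the proof of \eqref{con-infsup-2} being analogous with $a_0$ and $a_0^{\star}$ interchanged. For $u\in V$ given, I take $v = u + \alpha\phi$ with $\alpha>0$ a small constant to be chosen, where $\phi\in V$ is the lifting of the jump $[\![u]\!]$ obtained as the solution of the pure-jump interface problem
\begin{align*}
	-\nabla\cdot(\mu\nabla\phi) &= 0 && \text{in } \Omega_1\cup\Omega_2,\\
	[\![\phi]\!] &= [\![u]\!] && \text{on } \Gamma,\\
	[\![\mu\nabla\phi\cdot\bm n]\!] &= 0 && \text{on } \Gamma,\\
	\phi &= 0 && \text{on } \partial\Omega,
\end{align*}
which is well-posed by \Cref{basic0} and satisfies $\|\phi\|_1 \le C\|[\![u]\!]\|_{\frac{1}{2},\Gamma}$, so in particular $\|v\|_1 \le C\|u\|_1$.

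The argument then proceeds in three parts. First, since the antisymmetric pair $-\langle\{\!\!\{\mu\nabla u\cdot\bm n\}\!\!\},[\![u]\!]\rangle_\Gamma + \langle[\![u]\!],\{\!\!\{\mu\nabla u\cdot\bm n\}\!\!\}\rangle_\Gamma$ cancels identically, one obtains the baseline identity $a_0^{\star}(u,u) = \mu_1\|\nabla u_1\|_{\Omega_1}^2 + \mu_2\|\nabla u_2\|_{\Omega_2}^2$, supplying piecewise gradient control. Second, I compute the cross term $a_0^{\star}(u,\phi) = a_0(\phi,u)$ via integration by parts in each subdomain, exploiting that $\phi$ is piecewise harmonic with the prescribed jump and vanishing flux-jump, together with the average/jump identities of \Cref{lem:por-averge}; the objective is to produce a contribution of the form $c_1\|[\![u]\!]\|_\Gamma^2$ up to a remainder absorbable into $\varepsilon(\mu_1\|\nabla u_1\|_{\Omega_1}^2+\mu_2\|\nabla u_2\|_{\Omega_2}^2)$. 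Third, choosing $\alpha$ sufficiently small and applying Young's inequality balances the two contributions, after which the broken Poincar\'e inequality developed in the proof of \Cref{basic-robin}, namely $\|u_1\|_{0,\Omega_1}+\|u_2\|_{0,\Omega_2} \le C(\|\nabla u_1\|_{\Omega_1}+\|\nabla u_2\|_{\Omega_2}+\|[\![u]\!]\|_\Gamma)$, upgrades gradient-plus-jump control to full control of $\|u\|_1$. Combined with $\|v\|_1 \le C\|u\|_1$, this yields the desired lower bound on the ratio.

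The hard part will be the cross computation of $a_0^{\star}(u,\phi)$ in the second step. Because of the non-symmetric Nitsche structure, the boundary terms arising from integration by parts can enter with either sign, and one must carefully apply \Cref{lem:por-averge} to both interface integrals so that the extracted $\|[\![u]\!]\|_\Gamma^2$ contribution has the correct sign and sufficient magnitude to dominate the indefinite cross terms after $\varepsilon$-absorption. A secondary difficulty is the $\mu$-dependence of the lifting bound: the factor $\min(\mu_1,\mu_2)$ present in \Cref{basic0} suggests that, as in the final step of the proof of \Cref{dual-reg-1}, one may need to handle the cases $\mu_1\ge\mu_2$ and $\mu_2\ge\mu_1$ separately in order to secure a constant $C$ independent of the coefficient jump.
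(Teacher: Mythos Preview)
Your plan has a genuine gap in the ``hard part'' you correctly flagged: the cross term $a_0^{\star}(u,\phi)$ with your jump-lifting $\phi$ does \emph{not} produce a term of the form $c_1\|[\![u]\!]\|_\Gamma^2$. Indeed, since $\phi$ is piecewise $\mu$-harmonic with $[\![\mu\nabla\phi\cdot\bm n]\!]=0$, integration by parts gives
\[
\sum_{i=1}^2\mu_i(\nabla u_i,\nabla\phi_i)_{\Omega_i}
=\langle u_1,\mu_1\nabla\phi_1\cdot\bm n\rangle_\Gamma-\langle u_2,\mu_2\nabla\phi_2\cdot\bm n\rangle_\Gamma
=\langle [\![u]\!],\{\!\!\{\mu\nabla\phi\cdot\bm n\}\!\!\}\rangle_\Gamma,
\]
and combining this with the two interface terms of $a_0^{\star}$ (using $[\![\phi]\!]=[\![u]\!]$) leaves you with
\[
a_0^{\star}(u,\phi)=\langle\{\!\!\{\mu\nabla u\cdot\bm n\}\!\!\},[\![u]\!]\rangle_\Gamma,
\]
an indefinite pairing between the \emph{flux average of $u$} and the jump. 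There is no positive $\|[\![u]\!]\|_\Gamma^2$ contribution to extract, and since $u\in V$ is only piecewise $H^1$ you cannot integrate by parts the other way to manufacture one. Consequently the broken Poincar\'e step at the end never gets the jump control it needs.

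The paper sidesteps the jump entirely: instead of lifting $[\![u]\!]$, it takes $w\in V$ solving $-\nabla\cdot(\mu\nabla w)=u$ with \emph{homogeneous} interface conditions $[\![w]\!]=0$, $[\![\mu\nabla w\cdot\bm n]\!]=0$. Then the same integration-by-parts computation collapses cleanly to $a_0^{\star}(u,w)=\|u_1\|_{0,\Omega_1}^2+\|u_2\|_{0,\Omega_2}^2$, and taking $v=u+w$ immediately yields $a_0^{\star}(u,v)\ge c\sum_i\mu_i\|u_i\|_{1,\Omega_i}^2$ without any jump term appearing. The point is that the target norm $\|u\|_1$ is the broken $H^1$ norm, so what is missing after $a_0^{\star}(u,u)$ is $\|u\|_0$, not $\|[\![u]\!]\|_\Gamma$; choosing the auxiliary problem with source $u$ targets that deficit directly. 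For \eqref{con-infsup-2} the paper then argues by duality from \eqref{con-infsup-1}, defining $w$ via $a_0^{\star}(w,v)=(u,v)$ and again setting $v=u+w$.
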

	\begin{proof} 
		\textbf{Proof of \eqref{con-infsup-1}:}	
		For any given $u\in V$, consider $w\in V$ solving: 
		\begin{align*}
			-\nabla\cdot(\mu \nabla w)&= u\qquad \text{in }\Omega,\\
			[\![\mu \nabla w\cdot\bm n]\!]&=0\qquad \text{on }\Gamma,\\
			[\![w]\!]&=0\qquad\!\! \text{on }\Gamma.
		\end{align*}
		By the elliptic regularity we can get
		\begin{align}\label{con-bound-w-1}
			\|w_{i}\|_{1,\Omega_i}\le  C\|u\|_{0,\Omega_i}.
		\end{align}
		Integration by parts, \eqref{lem:por-averge}, we can obtain
		\begin{align}\label{con-a-w-1}
			a^{\star}_0(u, w)=\sum_i^2-\mu_i(u_i,\Delta w_i)_{\Omega_i}=\sum_i^2\|u_i\|_{0,\Omega_i}^2.
		\end{align}
		Defining $v=u+w\in H^1(\Omega)$ and using \eqref{con-bound-w-1}, we have
		\begin{align}\label{con-boud-v-1}
			\|v\|_{1,\Omega_i}\le \|u\|_{1,\Omega_i}+\|w\|_{1,\Omega_i}\le C\|u\|_{1,\Omega_i}.
		\end{align}
		Then using \eqref{con-a-w-1} and \eqref{con-boud-v-1} implies
		\begin{align*}
			a^{\star}_0(u,v)&= a^{\star}_0(u, u)+a_0^{\star}(u, w)=\sum_{i=1}^2\left(\mu_i\|\nabla u_i\|_{0,\Omega_i}^2 +\|u_i\|_{0,\Omega_i}^2\right)\\
			&=\sum_{i=1}^2\mu_i\|u\|_{1,\Omega_i}^2\ge C\min\{\mu_1,\mu_2\}\|u\|_1\|v\|_1,
		\end{align*}
		which proves \eqref{con-infsup-1}.
		
		\textbf{Proof of \eqref{con-infsup-2}:} 
		When $u$ is a constant, \eqref{con-infsup-2} obviously holds by Poincaré inequality. Clearly, for any given $u\in V$, where $u$ isn't a constant, it holds
		\begin{align}\label{con-infsup-3}
			\sup_{0\neq v\in V}
			a^{\star}_0(v, u)> 0.
		\end{align}
		In fact, if there is a $0\neq u\in V$ such that
		\begin{align*}
			\sup_{0\neq v\in V}
			a^{\star}_0(v, u) =0.
		\end{align*}
		Then
		\begin{align*}
			\sum_{i=1}^2\mu_i\|\nabla u_i\|_{0,\Omega_i}^2=a^{\star}_0(u, u) =0,
		\end{align*}
		implying $u$ is a piecewise constant. For any $v\in V$,
		\begin{align*}
			a^{\star}_0(v, u)=\langle\{\mu\nabla v\cdot\bm n\},\![\![ u]\!]\!\ \rangle_{\Gamma}
			=\![\![ u]\!]\!\ \langle\{\mu\nabla v\cdot\bm n\}, 1 \rangle_{\Gamma}=0.
		\end{align*}
		By the arbitrary of $v\in V$ we can deduce $u$ is a constant, 
		which cases contradiction. {\eqref{con-infsup-3} holds.}
		{Given} $u\in H^1(\Omega)$, find $w\in H^1(\Omega)$ such that
		\begin{align}\label{con-def-w-2}
			a^{\star}(w, v)=(u, {v}).
		\end{align}
		The inf-sup condition \eqref{con-infsup-1} and the Babuška  Theorem ensure  a unique $w\in V$ for this problem with
		\begin{align}\label{con-bound-w-2}
			C\|w\|_1\le 
			\sup_{0\neq v\in V} \frac{a^{\star}_0(w, v)}{\|v\|_1}
			=\sup_{0\neq v\in V} \frac{({u},v)}{\|v\|_1}
			\le C\|u\|_0.
		\end{align}
		{Again, defining $v=u+w$ and using \eqref{con-bound-w-2} give}
		\begin{align}\label{con-bound-v-2}
			\|v\|_1\le \|u\|_1+\|w\|_1\le C\|u\|_1.
		\end{align}
		{In the end, using \eqref{con-bound-w-2} and \eqref{con-bound-v-2}, we write}
		\begin{align*}
			a_0(u, v)=a_0(u, u)+a_0(u, w)= a_0(u, u)+a_0^{\star}(v, u)\ge C\min\{\mu_1,\mu_2\}\|u\|_1\|v\|_1,
		\end{align*}
		concluding the proof.
	\end{proof}
	
	To facilitate the subsequent analysis of the stability and error estimates for the penalty-free non-symmetric Nitsche's method, we establish some notations and assumptions concerning the geometric structure, primarily following the framework of \cite{boiveau2015fitted}. We define the sets of elements related to the interface as
	
	
	\begin{align*}
		\Omega_i^*=\{T\in\mathcal{T}_h|T\cap\Omega_i\neq\emptyset\}.
	\end{align*}
	Furthermore, we partition the set of elements intersected by the interface, $\mathcal{T}_h^\Gamma$, into $N_p$ disjoint subsets $\mathcal{T}_j^\Gamma$, where $j = 1, \ldots, N_p$.
	Let $I_{\mathcal{T}_j^\Gamma}$ be the index set of all nodes ${x_n}$ in the subset $\mathcal{T}_j^\Gamma$. We define two nodal subsets $I_j^1$ and $I_j^2$ as follows:
	\begin{align*}
		I_j^1 := \{ x_n \in I_{\mathcal{T}_j^\Gamma} \mid x_n \in \Omega_1, \, x_n \notin I_{\mathcal{T}_i^\Gamma} \, \forall i \neq j \},\\
		I_j^2 := \{ x_n \in I_{\mathcal{T}_j^\Gamma} \mid x_n \in \Omega_2, \, x_n \notin I_{\mathcal{T}_i^\Gamma} \, \forall i \neq j \},
	\end{align*}
	Based on these nodal sets, we construct two patches $P_j^1 $ and $P_j^2$ for each $\mathcal{T}_j^\Gamma$:
	\begin{align*}
		P_j^1 := \mathcal{T}_j^\Gamma \cup \{ T \in \mathcal{T}_h \mid I_j^1 \in T \}, \quad P_j^2 := \mathcal{T}_j^\Gamma \cup \{ T \in \mathcal{T}_h \mid I_j^2 \in T \}.
	\end{align*}
	Each patch $P_j^i$ is constructed such that $I_j^i \neq \emptyset$ for $i = 1,2$. Let $\Gamma_j := \Gamma \cap \mathcal{T}_j^\Gamma$ denote the part of the interface contained within the patches $P_j^1$ and $P_j^2$. For all $j$ and $i = 1, 2$, the patch $P_j^i$ satisfies the following scaling properties:
	\begin{align*}
		|\Gamma_j|\sim h^{d-1}  \quad \text{and}\quad  |P_j^i| \sim h^d.
	\end{align*}
	
	The key to proving stability is to construct a suitable test function that controls the interface terms. Following the methodology of \cite{boiveau2015fitted}, we employ the patch structure defined above. We define a function $v_h^1 \in V_h$ as
	\begin{align}\label{v_Gamma^1}
		v_{\Gamma,1} = \alpha \sum_{j=1}^{N_p} v_{j,1},
	\end{align}
	where
	\begin{align*}
		v_{j,1} = \nu_j \phi_j \ , \quad \nu_j \in \mathbb{R}.,
	\end{align*}
	and the function $\phi_j$ associated with the patch $P_j^1$ is defined by its nodal values:
	\begin{equation*}
		\phi_j(x_i) = 
		\begin{cases} 
			0, & \text{for } x_i \notin I_j^1, \\
			1, & \text{for } x_i \in I_j^1,
		\end{cases}
	\end{equation*}
	with $ i = 1, \ldots, N_n $. $ N_n $ is the number of nodes in the triangulation $ \mathcal{T}_h $. 
	
	Let $\overline{w}^{\Gamma_j} := |\Gamma_j|^{-1} \int_{\Gamma_j} w \, ds$ denote the $P_0$-projection of a function $w$ onto the interface segment $\Gamma_j$. The function $v_{j,1}$ is constructed to have the crucial property:
	\begin{align}\label{means-vj}
		|\Gamma_j|^{-1} \int_{\Gamma_j} \nabla v_{j,1} \cdot n \, {\rm d}s = h^{-1} \overline{\![\![ u_h \!]\!]}^{\Gamma_j}
	\end{align}
	
	We recall the following estimates from \cite{boiveau2015fitted}, which are essential for the analysis
	\begin{align}
		\left\| u_{h,i} - \overline{u_{h,i}}^{\Gamma_j} \right\|_{\Gamma_j} &\le Ch \, \| \nabla u_{h,i} \|_{\Gamma_j}\label{means1},\\
		\left\| v_{j,1} \right\|_{P_j^1} &\le Ch \, \| \nabla v_{j,1} \|_{P_j^1}\label{patch-est},\\
		\left\| \nabla v_{j,1} \right\|_{P_j^1}^2 &\le C \left\| \mathfrak{h}^{-\frac{1}{2}} \overline{\llbracket u_h \rrbracket}^{\Gamma_j} \right\|_{\Gamma_j}^2\label{brack-est}. 
	\end{align}
	
	Then following the procedure for proving Theorem 1 in \cite{boiveau2015fitted}, we similarly complete the proof of the stability of the bilinear form in \Cref{lemma:6.1} and \Cref{thm:dis-lbb2}.
	{ \begin{lemma}\label{lemma:6.1}
			Considering the patches $P_j^i$ as defined above, for all $u_h \in V_h$, the following inequality holds
			\begin{equation}
				\begin{split}
					&\sum_{j=1}^{N_p}c_0 \left\| \mathfrak{h}^{-\frac{1}{2}} \overline{[\![ u_h ]\!]}^{\Gamma_j} \right\|^2_{\Gamma_j}\\
					& \qquad\ge \sum_{j=1}^{N_p}\frac{c_0}{2} \left\| \mathfrak{h}^{-\frac{1}{2}} [\![ u_h ]\!] \right\|_{\Gamma_j}^2 - C w_1 \sum_{j=1}^{N_p^1} \left\| \mu_1^{\frac{1}{2}} \nabla u_{h,1} \right\|_{P_j^1}^2 - C w_2 \sum_{j=1}^{N_p^2} \left\| \mu_2^{\frac{1}{2}} \nabla u_{h,2} \right\|_{P_j^2}^2.
				\end{split}
			\end{equation}
		\end{lemma}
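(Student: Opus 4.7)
The strategy is the classical Poincaré-type trick: each jump $[\![u_h]\!]$ on $\Gamma_j$ splits into its mean $\overline{[\![u_h]\!]}^{\Gamma_j}$ plus a zero-mean fluctuation, and the fluctuation is controlled by a gradient on a neighbourhood. First I write, for every $j$,
\begin{align*}
\left\|\mathfrak{h}^{-\frac12}[\![u_h]\!]\right\|_{\Gamma_j}^{2}
\le 2\left\|\mathfrak{h}^{-\frac12}\overline{[\![u_h]\!]}^{\Gamma_j}\right\|_{\Gamma_j}^{2}
+2\left\|\mathfrak{h}^{-\frac12}\bigl([\![u_h]\!]-\overline{[\![u_h]\!]}^{\Gamma_j}\bigr)\right\|_{\Gamma_j}^{2},
\end{align*}
which rearranges to the desired lower bound on $c_0\|\mathfrak{h}^{-1/2}\overline{[\![u_h]\!]}^{\Gamma_j}\|^{2}_{\Gamma_j}$ minus a fluctuation term.

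Next I estimate the fluctuation. Since $[\![u_h]\!]-\overline{[\![u_h]\!]}^{\Gamma_j}=(u_{h,1}-\overline{u_{h,1}}^{\Gamma_j})-(u_{h,2}-\overline{u_{h,2}}^{\Gamma_j})$, the triangle inequality together with \eqref{means1} applied to each component gives
\begin{align*}
\left\|[\![u_h]\!]-\overline{[\![u_h]\!]}^{\Gamma_j}\right\|_{\Gamma_j}
\le C h\bigl(\|\nabla u_{h,1}\|_{\Gamma_j}+\|\nabla u_{h,2}\|_{\Gamma_j}\bigr).
\end{align*}
Multiplying by $c_0 \mathfrak{h}^{-1}$ yields a factor $c_0 h$ in front of $\|\nabla u_{h,i}\|^{2}_{\Gamma_j}$, which I then convert to a volume norm over $P_j^i$ by the trace inequality (Lemma on the trace) combined with the inverse inequality on the polynomial $\nabla u_{h,i}$, producing $\|\nabla u_{h,i}\|^{2}_{\Gamma_j}\le C h^{-1}\|\nabla u_{h,i}\|^{2}_{P_j^i}$. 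Hence
\begin{align*}
c_0 \left\|\mathfrak{h}^{-\frac12}\bigl([\![u_h]\!]-\overline{[\![u_h]\!]}^{\Gamma_j}\bigr)\right\|^{2}_{\Gamma_j}
\le C c_0 \bigl(\|\nabla u_{h,1}\|^{2}_{P_j^1}+\|\nabla u_{h,2}\|^{2}_{P_j^2}\bigr).
\end{align*}

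To convert $c_0$ into the weighted coefficients $w_i\mu_i$ on the right-hand side, I invoke the identity $w_1\mu_1=w_2\mu_2=c_0/2$ following from the choice $w_1=\mu_2/(\mu_1+\mu_2)$, $w_2=\mu_1/(\mu_1+\mu_2)$, which was already exploited in \eqref{por-c0}. This gives $c_0\|\nabla u_{h,i}\|^{2}_{P_j^i}\le 2 w_i\mu_i\|\nabla u_{h,i}\|^{2}_{P_j^i}$, matching the targeted form. Summing over $j=1,\dots,N_p$ and regrouping according to the patch families $P_j^1$ (indexed by $j=1,\dots,N_p^1$) and $P_j^2$ (indexed by $j=1,\dots,N_p^2$) delivers the lemma, assuming bounded overlap of the patches so that the local bounds assemble without losing a mesh-size factor.

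\textbf{Main obstacle.} The routine calculations are straightforward, but some care is needed at two points: (i) justifying the inverse trace estimate $\|\nabla u_{h,i}\|_{\Gamma_j}\le C h^{-1/2}\|\nabla u_{h,i}\|_{P_j^i}$ in the unfitted setting, since $\Gamma_j$ does not coincide with an element face; this relies on the standard cut-element trace inequality which in turn uses the shape-regularity of the patch and the assumption that $|\Gamma_j|\sim h^{d-1}$. (ii) Ensuring that when summing over $j$ the families $\{P_j^1\}$ and $\{P_j^2\}$ have only a finite (mesh-independent) overlap, so that the assembled bound does not absorb a hidden $h$-dependent multiplicity; this is where the construction of the patches from the nodal sets $I_j^i$ (each node being assigned to a unique patch) is crucial and must be invoked explicitly.
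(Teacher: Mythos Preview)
Your proposal is correct and follows essentially the same route as the paper: split the jump into mean plus fluctuation, bound the fluctuation componentwise via \eqref{means1}, pass from $\Gamma_j$ to $P_j^i$ by the cut-element trace/inverse inequalities, and convert $c_0$ into $w_i\mu_i$ using $w_1\mu_1=w_2\mu_2=c_0/2$. The only cosmetic difference is that the paper applies the triangle inequality to the norms first and then squares with Young's inequality, whereas you square first and use $\|a\|^2\le 2\|b\|^2+2\|a-b\|^2$; your explicit remarks on the unfitted trace estimate and on bounded patch overlap are in fact more careful than what the paper spells out.
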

		
		\begin{proof}
			With the help of triangle inequality, the definition of the jump, inequality \eqref{means1}, trace inequality and inverse inequality we can write
			\begin{align*}
				& c_0^{\frac{1}{2}}\left\| \mathfrak{h}^{-\frac{1}{2}} [\![ u_h ]\!] \right\|_{\Gamma_j}\\
				&\le c_0^{\frac{1}{2}}\left\| \mathfrak{h}^{-\frac{1}{2}} \overline{[\![ u_h ]\!]}^{\Gamma_j} \right\|_{\Gamma_j} + c_0^{\frac{1}{2}}\left\| \mathfrak{h}^{-\frac{1}{2}} \left( [\![ u_h ]\!] - \overline{[\![ u_h ]\!]}^{\Gamma_j} \right) \right\|_{\Gamma_j^1} \\
				&= c_0^{\frac{1}{2}} \left\| \mathfrak{h}^{-\frac{1}{2}} \overline{[\![ u_h ]\!]}^{\Gamma_j} \right\|_{\Gamma_j} +   c_0^{\frac{1}{2}}\left\|\mathfrak{h}^{-\frac{1}{2}}\left( u_{h,1} - \overline{u_{h,1}}^{\Gamma_j^1}\right) -\left(u_{h,2} - \overline{u_{h,2}}^{\Gamma_j^2}\right) \right\|_{\Gamma_j} \\
				&\leq c_0^{\frac{1}{2}}\left\| \mathfrak{h}^{-\frac{1}{2}} \overline{[\![ u_h ]\!]}^{\Gamma_j} \right\|_{\Gamma_j} +   c_0^{\frac{1}{2}} \left\|\mathfrak{h}^{-\frac{1}{2}}\left( u_{h,1} - \overline{u_{h,1}}^{\Gamma_j^1}\right) \right\|_{\Gamma_j^1}+ c_0^{\frac{1}{2}}\left\| \mathfrak{h}^{-\frac{1}{2}} \left(u_{h,2} - \overline{u_{h,2}}^{\Gamma_j^2}\right) \right\|_{\Gamma_j^2} \\
				&\leq  c_0^{\frac{1}{2}}\left\| \mathfrak{h}^{-\frac{1}{2}} \overline{[\![ u_h ]\!]}^{\Gamma_j} \right\|_{\Gamma_j} + C w_1^{\frac{1}{2}} \left\| \mu_1^{\frac{1}{2}} \nabla u_{h,1} \right\|_{P_j^1} + C w_2^{\frac{1}{2}}  \left\| \mu_2^{\frac{1}{2}} \nabla u_{h,2} \right\|_{P_j^2}.
			\end{align*}
			Square both the left and right sides, then apply Young's inequality to the right side to obtain
			\begin{align*}
				c_0 \left\| \mathfrak{h}^{-\frac{1}{2}} [\![ u_h ]\!] \right\|_{\Gamma_j}^2 &\le 2c_0 \left\| \mathfrak{h}^{-\frac{1}{2}} \overline{[\![ u_h ]\!]}^{\Gamma_j} \right\|^2_{\Gamma_j} + C w_1  \left\| \mu_1^{\frac{1}{2}} \nabla u_{h,1} \right\|_{P_j^1}^2 + C w_2  \left\| \mu_2^{\frac{1}{2}} \nabla u_{h,2} \right\|_{P_j^2}^2,
			\end{align*}
			which finishes proof.
		\end{proof}
		
		\begin{theorem}
			\label{thm:dis-lbb2}
			There exists a positive constant $\beta > 0$ such that for all $u_h \in V_h$ the following inequality holds
			\begin{align}
				\beta \interleave u_h \interleave_* \leqslant \sup_{v_h \in V_h} \frac{a^{\star}_0(u_h, v_h)+s_h(u_h,v_h)}{\interleave v_h \interleave_*}\label{dis-lbb-adjonit}\\
				\beta \interleave u_h \interleave_* \leqslant \sup_{v_h \in V_h} \frac{a_0(u_h, v_h)+s_h(u_h,v_h)}{\interleave v_h \interleave_*}\label{dis-lbb}.
			\end{align}
		\end{theorem}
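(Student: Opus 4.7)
The plan is to follow the patch-bubble strategy of \cite{boiveau2015fitted}: for any $u_h\in V_h$ I construct a test function of the form $v_h=u_h+\eta(v_{\Gamma,1}+v_{\Gamma,2})$ with a small parameter $\eta>0$, where $v_{\Gamma,i}\in V_h$ is the patch-bubble function analogous to \eqref{v_Gamma^1} supported on $\bigcup_j P_j^i$, with the real coefficients $\nu_j$ fixed so that the averaging identity \eqref{means-vj} holds on each $\Gamma_j$. The role of $v_{\Gamma,i}$ is to extract, out of the non-symmetric boundary term of $a_0^\star$, precisely the jump contribution $c_0\|\h^{-1/2}[\![u_h]\!]\|_\Gamma^2$ that the penalty-free formulation does not supply directly, while the ghost penalty $s_h$ is what allows gradient estimates to be transferred from the physical cut regions $\Omega_i\cap T$ onto the entire patch element $T\in P_j^i$.

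First I would test with $v_h=u_h$: since the two cross boundary terms in $a_0^\star$ cancel, $a_0^\star(u_h,u_h)+s_h(u_h,u_h)=\mu_1\|\nabla u_{h,1}\|_{\Omega_1}^2+\mu_2\|\nabla u_{h,2}\|_{\Omega_2}^2+s_h(u_h,u_h)$, which controls every summand of $\interleave u_h\interleave_*^2$ except $c_0\|\h^{-1/2}[\![u_h]\!]\|_\Gamma^2$. Next I would compute $a_0^\star(u_h,v_{\Gamma,1}+v_{\Gamma,2})$. In the skew boundary term $-\langle[\![u_h]\!],\{\!\!\{\mu\nabla v_\Gamma\cdot\bm n\}\!\!\}\rangle_\Gamma$, using $\{\!\!\{\mu\}\!\!\}=c_0$ together with \eqref{means-vj} produces, up to sign, $\sum_{j}c_0\|\h^{-1/2}\overline{[\![u_h]\!]}^{\Gamma_j}\|_{\Gamma_j}^2$, and \Cref{lemma:6.1} turns this into $\tfrac12 c_0\|\h^{-1/2}[\![u_h]\!]\|_\Gamma^2$ minus patch gradient residuals $w_i\|\mu_i^{1/2}\nabla u_{h,i}\|_{P_j^i}^2$. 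The remaining terms of $a_0^\star(u_h,v_\Gamma)$ (the volume products and the $\langle\{\!\!\{\mu\nabla u_h\cdot\bm n\}\!\!\},[\![v_\Gamma]\!]\rangle_\Gamma$ piece) are bounded by Cauchy--Schwarz together with the patch estimates \eqref{patch-est}--\eqref{brack-est} and the trace/inverse inequalities \Cref{trace}, \Cref{inverse}, giving a product of $\sum_i\mu_i^{1/2}\|\nabla u_{h,i}\|_{\Omega_i}$ with $\|\h^{-1/2}\overline{[\![u_h]\!]}^{\Gamma_j}\|$-type factors that Young's inequality will split.

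Combining the two tests with a sufficiently small $\eta>0$, the patch residuals $w_i\|\mu_i^{1/2}\nabla u_{h,i}\|_{P_j^i}^2$ are absorbed by $\sum_i\mu_i\|\nabla u_{h,i}\|_{\Omega_i}^2+s_h(u_h,u_h)$ via the standard ghost-penalty estimate from \cite{MR2738930}, and Young's inequality disposes of the cross terms, yielding $a_0^\star(u_h,v_h)+s_h(u_h,v_h)\ge C\interleave u_h\interleave_*^2$. It remains to bound the denominator $\interleave v_h\interleave_*\le\interleave u_h\interleave_*+\eta\,\interleave v_{\Gamma,1}+v_{\Gamma,2}\interleave_*$; the second summand is controlled by $C\interleave u_h\interleave_*$ using \eqref{brack-est}, the trace and inverse inequalities on the patch-supported polynomials $v_{\Gamma,i}$, and \Cref{lemma:6.1} once more to pass from the averaged jump back to the full jump. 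The bound \eqref{dis-lbb} for $a_0$ itself is obtained by the same construction; the only change is a sign flip in the skew term, which does not affect any estimate.

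The main obstacle I expect is making sure that the patch gradient residuals produced by \Cref{lemma:6.1} are absorbed \emph{with a constant independent of the contrast} $\mu_1/\mu_2$. This is exactly where the weights $w_i=\mu_{3-i}/(\mu_1+\mu_2)$ and the choice $c_0=\{\!\!\{\mu\}\!\!\}$ pay off: the bound $w_i\mu_i\le c_0$ from \eqref{por-c0} forces the residuals on $P_j^i$ to carry the coefficient $\mu_i$ rather than $c_0$, so they land exactly in the pure diffusion coercivity without any contrast factor. A secondary delicate point is that $v_{\Gamma,i}$ is supported on patch elements that may only partially lie in $\Omega_i$; this is precisely why $s_h$ must appear on both sides of the inequality, for without the ghost penalty $\interleave v_{\Gamma,i}\interleave_*$ could not be bounded by $\interleave u_h\interleave_*$ uniformly in the cut configuration.
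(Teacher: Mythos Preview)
Your strategy is the same as the paper's: build a Fortin-type test function $v_h=u_h+(\text{patch bubble})$ following \cite{boiveau2015fitted}, use \eqref{means-vj} to extract the missing jump control from the skew boundary term, invoke \Cref{lemma:6.1} to pass from averaged to full jumps, and use the ghost-penalty norm equivalence to absorb the patch residuals. The one substantive difference is that you take bubbles on \emph{both} sides, $v_\Gamma=v_{\Gamma,1}+v_{\Gamma,2}$, whereas the paper assumes without loss of generality $\mu_1\le\mu_2$ and uses only $v_{\Gamma,1}$.

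That difference matters exactly for the contrast-robustness you single out as the main obstacle. Your handling of the residuals produced by \Cref{lemma:6.1} is fine: those carry $w_i\mu_i=c_0/2$ and drop harmlessly into the diffusion coercivity. But the volume cross-term $\eta\,\mu_i(\nabla u_{h,i},\nabla v_{\Gamma,i})_{\Omega_i}$ is a different story. The bound \eqref{brack-est} gives $\|\nabla v_{\Gamma,i}\|_{P_j^i}^2\le C\|\h^{-1/2}\overline{[\![u_h]\!]}\|_{\Gamma_j}^2$ with \emph{no} $\mu$-weight, so Cauchy--Schwarz plus Young leaves a residual of order $\eta^2\mu_i\,\|\h^{-1/2}\overline{[\![u_h]\!]}\|_{\Gamma_j}^2$ to be absorbed into a gain of only $\eta\,w_i\mu_i=\eta\,c_0/2$ from the skew term. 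For the side with the \emph{larger} coefficient this forces $\eta\lesssim c_0/\max(\mu_1,\mu_2)$, and the resulting $\beta$ degenerates with the contrast. The paper's one-sided construction avoids this: under $\mu_1\le\mu_2$ only the $\mu_1$-volume term is present, and $\mu_1=\min(\mu_1,\mu_2)\le c_0$ by \eqref{por-c0}, so the residual matches the gain with a contrast-free choice of $\eta$. The fix in your argument is simply to drop $v_{\Gamma,2}$ after ordering $\mu_1\le\mu_2$; everything else you wrote carries over verbatim, including the sign-flip remark for passing from $a_0^\star$ to $a_0$.
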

		\begin{proof}
			Without loss of generality, we assume that $\mu_1\le\mu_2$. When $\mu_1\ge\mu_2$, we can construct a new interpolation $v_{\Gamma,2}=\alpha\sum_{j=1}^{N_p}v_{j,2}$ to achieve the same result.
			We only give the proof for \eqref{dis-lbb}, since the proof for \eqref{dis-lbb-adjonit} is similar.
			
			\textbf{Step One:}
			First of all, for any $u_h, v_h \in V_h$ with $v_h = u_h + v_{\Gamma,1}$, $v_{\Gamma,1}$ defined by \Cref{v_Gamma^1}, we proof that there exists a positive constant $\beta_0$ such that the following inequality holds
			\begin{align*}
				\beta_0 \interleave u_h \interleave^2_* \leqslant a_0(u_h, v_h)+s_h(u_h,v_h).
			\end{align*}
			By the definition of  $v_{\Gamma,1}$, we can write the following
			\begin{align*}
				(a_0+s_h)(u_h, v_h) = (a_0+s_h)(u_h, u_h) + \alpha \sum_{j=1}^{N_p^1} (a_0(u_h, v_{j,1})+s_h(u_h,v_{j,1})).
			\end{align*}
			Clearly it holds
			\begin{align*}
				(a_0+s_h)(u_h, u_h) &= \left\| \mu_1^{\frac{1}{2}} \nabla u_{h,1} \right\|_{\Omega_1}^2 + \left\| \mu_2^{\frac{1}{2}} \nabla u_{h,2} \right\|_{\Omega_2}^2+s_h(u_,u_h)\\
				&\ge C(\left\| \mu_1^{\frac{1}{2}} \nabla u_{h,1} \right\|_{\Omega_1^*}^2 + \left\| \mu_2^{\frac{1}{2}} \nabla u_{h,2} \right\|_{\Omega_2^*}^2)
			\end{align*}
			and
			\begin{align*}
				(a_0+s_h)(u_h, v_{j,1}) &= (\mu_1 \nabla u_{h,1}, \nabla v_{j,1})_{P_j^1\cap\Omega_1}- \langle \{ \mu \nabla u_h \cdot n \}, v_{j,1} \rangle_{\Gamma_j^1}  \\
				&\quad+\omega_1 \langle \mu_1 \nabla v_{j,1} \cdot n, [\![ u_h ]\!] \rangle_{\Gamma_j^1}+s_h(u_{h,1},v_{j,1}).
			\end{align*}
			\textbf{Step Two, estimate the lower bound of $(\mu_1 \nabla u_{h,1}, \alpha \nabla v_{j,1})_{P_j^1}+s_h(u_{h,1},v_j^1)$:} 
			Using Cauchy-Schwarz inequality and inequality \eqref{brack-est}, we can get
			
			\begin{align*}
				&(\mu_1 \nabla u_{h,1}, \alpha \nabla v_{j,1})_{P_j^1}+s_h(u_{h,1},v_{j,1})\\ &\qquad\ge  -\left\| \mu_1^{\frac{1}{2}} \nabla u_{h,1} \right\|_{P_j^1} \alpha \mu_1^{\frac{1}{2}} \| \nabla v_{j,1} \|_{P_j^1}-s_h(u_{h,1},u_{h,1})^{\frac{1}{2}}\alpha s_h(v_{j,1},v_{j,1})^{\frac{1}{2}} \\
				&\qquad\geq -\epsilon \left\| \mu_1^{\frac{1}{2}} \nabla u_{h,1} \right\|_{P_j^1}^2 - \frac{C \alpha^2}{4\epsilon} \left\| \nabla v_{j,1} \right\|_{P_j^1}^2\\
				&\qquad\geq -\epsilon \left\| \mu_1^{\frac{1}{2}} \nabla u_{h,1} \right\|_{P_j^1}^2 - \frac{C \alpha^2c_0}{4\epsilon}\left(1+\frac{\mu_1}{\mu_2}\right) \left\|\h^{-\frac{1}{2}} \overline{[\![ u_h ]\!]}^{\Gamma^1_j} \right\|_{\Gamma_j^1}^2.
			\end{align*}
			The derivations in the last two lines are derived from \cite[Proposition 5.1]{MR3268662}:
			\begin{align*}
				C_1\|\mu_i^{\frac{1}{2}}\nabla v_{h,i}\|^2_{\Omega_i^*}\le\|\mu_i^{\frac{1}{2}}\nabla v_{h,i}\|^2_{\Omega_i}+s_h(v_{h,i},v_{h,i})\le C_2\|\mu_i^{\frac{1}{2}}\nabla v_{h,i}\|^2_{\Omega_i^*}.
			\end{align*}
			\textbf{Step Three, estimate the upper bound of $\left\langle \{\mu\nabla u_h\cdot n\},\alpha v_{j,1} \right\rangle_{\Gamma_j^1}$:} 
			Using the trace and inverse inequalities, \eqref{means1} and \eqref{brack-est} we can write
			\begin{align*}
				&\left\langle \{\!\!\{\mu\nabla u_h\cdot n\}\!\!\},\alpha v_{j,1} \right\rangle_{\Gamma_j^1}\\ 
				&\le \left( (\omega_1\mu_1)^{\frac{1}{2}}\|\mu_1^{\frac{1}{2}}\nabla u_{h,1}\cdot n\|_{\Gamma_j^1} + (\omega_2\mu_2)^{\frac{1}{2}}\|\mu_2^{\frac{1}{2}}\nabla u_{h,2}\cdot n\|_{\Gamma_j^1} \right)\alpha (c_0h)^{\frac{1}{2}}\|\mathfrak{h}^{-\frac{1}{2}} v_{j,1}\|_{\Gamma_j^1} \\
				&\le  \left( (\omega_1\mu_1)^{\frac{1}{2}}\|\mu_1^{\frac{1}{2}}\nabla u_{h,1}\cdot n\|_{\Gamma_j^1} + (\omega_2\mu_2)^{\frac{1}{2}}\|\mu_2^{\frac{1}{2}}\nabla u_{h,2}\cdot n\|_{\Gamma_j^1} \right)\alpha (c_0h)^{\frac{1}{2}}\left\|\mathfrak{h}^{-\frac{1}{2}}\overline{[\![u_h]\!]}^{\Gamma_j^1}\right\|_{\Gamma_j^1}.
			\end{align*}
			
			Taking the sum over the full boundary $\Gamma$ and using trace and inverse inequalities once again we obtain
			\begin{align*}
				\sum_{j=1}^{N_p} \left\langle \{\mu\nabla u_h\cdot n\},\alpha v_{j,1} \right\rangle_{\Gamma_j^1} &\leq \frac{\alpha^2}{2\epsilon} c_0\sum_{j=1}^{N_p^1} \left\|\mathfrak{h}^{-\frac{1}{2}}[\![u_h]\!]^{\Gamma_j^1}\right\|_{\Gamma_j^1}^2 + \frac{\epsilon\omega_1\mu_1 h_1}{2} \sum_{j=1}^{N_p^1} \|\mu_1^{\frac{1}{2}}\nabla u_{h,1}\cdot n\|_{\Gamma_j^1}^2 \\
				&\quad + \frac{\epsilon\omega_2\mu_2 h_2}{2} \sum_{j=1}^{N_p^2} \|\mu_2^{\frac{1}{2}}\nabla u_{h,2}\cdot n\|_{\Gamma_j^2}^2 \\
				&\le C\frac{\alpha^2}{2\epsilon}c_0 \sum_{j=1}^{N_p^1} \left\|\mathfrak{h}^{-\frac{1}{2}}[\![u_h]\!]^{\Gamma_j^1}\right\|_{\Gamma_j^1}^2 + \epsilon\omega_1\mu_1 \sum_{j=1}^{N_p^1} \left\|\mu_1^{\frac{1}{2}}\nabla u_{h,1}\right\|_{P_j^1}^2 \\
				&\quad + \epsilon\omega_2\mu_2 \sum_{j=1}^{N_p^2} \left\|\mu_2^{\frac{1}{2}}\nabla u_{h,2}\right\|_{P_j^2}^2.
			\end{align*}
			
			\textbf{Step Four, estimate the upper bound of $\alpha\omega_1 \left\langle \mu_1\nabla v_{j,1}\cdot n, [\![u_h]\!] \right\rangle_{\Gamma_j^1}$:} 
			Using the property \eqref{means-vj} of $v_{j,1}$ we can write for each face $\Gamma_j^1$, Using the trace inequality, Young's inequality and \eqref{means1} and inequality \eqref{brack-est} to get
			\begin{align*}
				&\alpha\omega_1 \left\langle \mu_1\nabla v_{j,1}\cdot n, [\![u_h]\!] \right\rangle_{\Gamma_j}\\
				&\qquad=\frac{\alpha c_0}{2} \left\|\mathfrak{h}^{-\frac{1}{2}}[\![u_h]\!]^{\Gamma_j}\right\|_{\Gamma_j}^2 + \alpha\omega_1 \left\langle \mu_1\nabla v_j^1\cdot n, [\![u_h]\!] - \overline{[\![u_h]\!]}^{\Gamma_j} \right\rangle_{\Gamma_j}\\
				&\qquad\ge \frac{\alpha c_0}{2} \left\|\mathfrak{h}^{-\frac{1}{2}}[\![u_h]\!]^{\Gamma_j}\right\|_{\Gamma_j}- \alpha\omega_1\mu_1 \|\nabla v_{j,1}\cdot n\|_{\Gamma_j^1} \left\|[\![u_h]\!] - \overline{[\![u_h]\!]}^{\Gamma_j^1}\right\|_{\Gamma_j^1}\\
				&\qquad\ge\alpha(1-\frac{C\alpha}{2\epsilon})\frac{c_0}{2}\left\|\mathfrak{h}^{-\frac{1}{2}}[\![u_h]\!]^{\Gamma_j}\right\|_{\Gamma_j}-\epsilon w_1\mu_1\|\nabla u_{h,1}\|_{P_j^1}^2-\epsilon w_2\mu_2\|\nabla u_{h,2}\|_{P_j^2}^2.
			\end{align*}

			\textbf{Step Five, estimate the lower bound of $a_0(u_h, v_h)$:} 
			The full bilinear form $a_0$ now has the following lower bound
			\begin{align*}
				a_0(u_h, v_h) &\ge \left\| \mu_1^{\frac{1}{2}} \nabla u_{h,1} \right\|_{\Omega_1 \setminus P^1}^2 + \left\| \mu_2^{\frac{1}{2}} \nabla u_{h,2} \right\|_{\Omega_2 \setminus P^2}^2 + C_a \sum_{j=1}^{N_p^1} \left\| \mu_1^{\frac{1}{2}} \nabla u_{h,1} \right\|_{P_j^1}^2 \\
				&\quad + C_b \sum_{j=1}^{N_p^2} \left\| \mu_2^{\frac{1}{2}} \nabla u_{h,2} \right\|_{P_j^2}^2 + C_c \sum_{j=1}^{N_p^1}c_0 \left\| \mathfrak{h}^{\frac{-1}{2}} \overline{[\![ u_h ]\!]}^{\Gamma_j^1} \right\|_{\Gamma_j^1}^2,
			\end{align*}
			with the constants
			\begin{align*}
				C_a &= 1 - \epsilon(1+2\omega_1\mu_1), \\
				C_b &= 1 - 2\epsilon\omega_2\mu_2, \\
				C_c &= \alpha \left( \frac{1}{2} - \frac{5C\alpha}{4\epsilon} \right).
			\end{align*}

			Using \Cref{lemma:6.1} it becomes
			\begin{align*}
				a_h(u_h, v_h) &\ge \left\| \mu_1^{\frac{1}{2}} \nabla u_{h,1} \right\|_{\Omega_1 \setminus P^1}^2 + \left\| \mu_2^{\frac{1}{2}} \nabla u_{h,2} \right\|_{\Omega_2 \setminus P^2}^2 + (C_a - w_1 C C_c) \sum_{j=1}^{N_p^1} \left\| \mu_1^{\frac{1}{2}} \nabla u_{h,1} \right\|_{P_j^1}^2 \\
				&\quad + (C_b - w_2 C C_c) \sum_{j=1}^{N_p^2} \left\| \mu_2^{\frac{1}{2}} \nabla u_{h,2} \right\|_{P_j^2}^2 + C_c \sum_{j=1}^{N_p^1} \frac{c_0}{2}\left\| \mathfrak{h}^{\frac{-1}{2}} [\![ u_h ]\!] \right\|_{\Gamma_j^1}^2.
			\end{align*}
			
			We fix $\epsilon = \frac{1}{2(1+2\omega_1\mu_1)}$.  The constant $C_c$ will be positive for $\alpha = \frac{\epsilon}{5C}$. The terms $(C_a - w_1 C C_c)$ and $(C_b - w_2 C C_c)$ will be both positive for
			\begin{align*}
				\alpha < \frac{\min(w_1^{-1},w_2^{-1})}{2C}.
			\end{align*}
			Then we can conclude that there exit a constant $\beta_0$ such that
			\begin{align*}
				\beta_0\interleave u_h\interleave_*^2\le a_0(u_h,v_h)+s_h(u_h,v_h).
			\end{align*}
			\textbf{Step Six, proof of $\interleave v_h\interleave_*\le C\interleave u_h\interleave_*$:} 
			
			With the triangle inequality, it holds
			\begin{align*}
				\interleave v_h \interleave_* \le \interleave u_h \interleave_* + \sum_{j=1}^{N_p^1} \interleave v_{j,1} \interleave_*.
			\end{align*}
			where
			\begin{align*}
				\interleave v_{j,1} \interleave_*^2 = \left\| \mu_1^{\frac{1}{2}} \nabla v_{j,1} \right\|_{P_j^1}^2 + c_0\left\| \mathfrak{h}^{-\frac{1}{2}} v_{j,1} \right\|_{\Gamma_j^1}^2+s_h(v_{j,1},v_{j,1}).
			\end{align*}

			Using the inequality \eqref{brack-est} gives the appropriate upper bound
			\begin{align*}
				&\sum_{j=1}^{N_p}\left(\left\| \mu_1^{\frac{1}{2}} \nabla v_{j,1} \right\|_{P_j^1}^2+s_h(v_{j,1},v_{j,1})\right)\le C\sum_{j=1}^{N_p}\|\mu_1^{\frac{1}{2}}\nabla v_{j,1}\|_{P_j^{1*}}^2\\ 
				&\le C\sum_{j=1}^{N_p}c_0\left\| \mathfrak{h}^{-\frac{1}{2}} \overline{[\![ u_h ]\!]}^{\Gamma_j^1} \right\|_{\Gamma_j^1}^2 \le C \sum_{j=1}^{N_p}c_0\left\| \mathfrak{h}^{-\frac{1}{2}} [\![ u_h ]\!] \right\|_{\Gamma_j^1}^2 \le C \interleave u_h \interleave_*^2,
			\end{align*}
			
			Using the trace inequality and \eqref{patch-est}
			\begin{align*}
				c_0\left\| \mathfrak{h}^{-\frac{1}{2}} v_{j,1} \right\|^2_{\Gamma_j^1} \le C \left\| \mu_1^{\frac{1}{2}} \nabla v_{j,1} \right\|_{P_j^1}^2 \le C\interleave u_h \interleave_*^2.
			\end{align*}
			
			In conclusion, there exists a positive number $\beta$ such that the conclusion holds.
		\end{proof}
	}
	\begin{remark}
		we can define a new $v_{\Gamma,1}=\alpha\sum_{j=1}^{N_p}v_{j,1}$, where $v_{j,1}$ have the property
		\begin{align*}
			|\Gamma_j|^{-1} \int_{\Gamma_j}( -\nabla v_{j,1} )\cdot n \, {\rm d}s = h^{-1} \overline{\![\![ u_h \!]\!]}^{\Gamma_j}.
		\end{align*}
		With the similar proof for \eqref{dis-lbb}, we can obtain \eqref{dis-lbb-adjonit}.
	\end{remark}
	
	\subsection{Regularity for dual problem}
	In this subsection, we prove the regularity of the dual problem that will be used in the next subsection to establish the $L^2$-norm estimate. The proof is similar to that in \Cref{regualar-for-dual}.
	
	\begin{theorem}[Regularity for the dual problem]\label{dual-reg-1-pfree} We define the problem:
		Find $u\in V$ such that
		\begin{align}\label{aux-pfree}
			a^\star_0(u, v)
			=(f, v)
		\end{align}
		holds for all $v\in V$. Then the above problem has a unique solution $u\in V$. In addition, we have
		\begin{align}\label{dual-ref-pfree}
			\mu_1\|u\|_{2,\Omega_1}+\mu_2\|u\|_{2,\Omega_2}\le  C\|f\|_0.
		\end{align}
	\end{theorem}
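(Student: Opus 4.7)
The plan is to follow the four-step structure of the proof of \Cref{dual-reg-1}, modifying each step for the penalty-free bilinear form $a_0^\star$. Existence of a unique $u\in V$, together with the preliminary bound $\|u\|_1\le C\|f\|_0$, comes directly from the inf-sup condition \eqref{con-infsup-1} of \Cref{thm:a0lbb} via the Babu\v{s}ka theorem.

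Steps 1 and 2 transfer essentially verbatim from the proof of \Cref{dual-reg-1}: the dropped penalty term $\langle c_0\h^{-1}[\![u]\!],[\![v]\!]\rangle_\Gamma$ vanishes for the test functions used there (compactly supported $H_0^2$ functions on each subdomain, and then the interface lifts with $v_1|_\Gamma=v_2|_\Gamma=\beta_\zeta$ constructed in \Cref{biharmonic_remark}), so the same density arguments using $H_0^2(\Omega_i)\subset L^2(\Omega_i)$ and $H^{1/2}(\Gamma)\subset L^2(\Gamma)$ yield $-\mu_i\Delta u_i = f|_{\Omega_i}$ in $\Omega_i$ and $[\![\mu\nabla u\cdot\mathbf n]\!]=0$ on $\Gamma$.

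The decisive modification is in the analog of Step Three. For each $\beta_\theta\in H^{1/2}(\Gamma)$, I would take $v\in V$ to solve the homogeneous transmission problem $-\nabla\cdot(\mu\nabla v)=0$ in $\Omega$, $[\![\mu\nabla v\cdot\mathbf n]\!]=0$ and $[\![v]\!]=\beta_\theta$ on $\Gamma$, with the $L^2$-stability $\|v_1\|_{\Omega_1}+\|v_2\|_{\Omega_2}\le C\|\beta_\theta\|_{-\frac 1 2,\Gamma}$ supplied by the first theorem of \Cref{regualar-for-dual}. Integrating by parts on each $\Omega_i$ using $\Delta v_i=0$ and invoking \Cref{lem:por-averge} to collapse the $\{\!\!\{u\}\!\!\}$-type trace (which is annihilated by $[\![\mu\nabla v\cdot\mathbf n]\!]=0$) reduces $a_0^\star(u,v)$ to the single boundary pairing $\langle\{\!\!\{\mu\nabla u\cdot\mathbf n\}\!\!\},\beta_\theta\rangle_\Gamma$. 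Equating with $(f,v)$, bounding the right-hand side by $\|f\|_0\|v\|_0\le C\|f\|_0\|\beta_\theta\|_{-\frac 1 2,\Gamma}$, and approximating arbitrary $\theta\in H^{-\frac 1 2}(\Gamma)$ by $\beta_\theta\in H^{\frac 1 2}(\Gamma)$ gives $\|\{\!\!\{\mu\nabla u\cdot\mathbf n\}\!\!\}\|_{\frac 1 2,\Gamma}\le C\|f\|_0$.

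Combining this with $[\![\mu\nabla u\cdot\mathbf n]\!]=0$, the common normal flux $\mu_1\nabla u_1\cdot\mathbf n=\mu_2\nabla u_2\cdot\mathbf n=\{\!\!\{\mu\nabla u\cdot\mathbf n\}\!\!\}$ is controlled in $H^{\frac 1 2}(\Gamma)$ by $C\|f\|_0$. Each $u_i$ therefore satisfies a Poisson equation with Neumann data on $\Gamma$ and homogeneous Dirichlet data on $\partial\Omega\cap\partial\Omega_i$; applying \Cref{cylinder-2} in the mixed case, or \Cref{nuoyiman} in the pure-Neumann case (where compatibility is automatic since $\int_\Gamma\{\!\!\{\mu\nabla u\cdot\mathbf n\}\!\!\}\,\mathrm d\sigma=-\int_{\Omega_i}f\,\mathrm d x$ follows from $-\mu_i\Delta u_i=f$ and the divergence theorem), and then absorbing the low-order $\|u_i\|_0$ term produced by \Cref{nuoyiman} with the $H^1$-stability obtained from the Babu\v{s}ka step, delivers $\mu_i\|u_i\|_{2,\Omega_i}\le C\|f\|_0$. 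The main obstacle is the cancellation in Step~3: the reduction of $a_0^\star(u,v)$ to $\langle\{\!\!\{\mu\nabla u\cdot\mathbf n\}\!\!\},\beta_\theta\rangle_\Gamma$ relies on $\Delta v_i=0$ and $[\![\mu\nabla v\cdot\mathbf n]\!]=0$ used in coordinated fashion, so that the term $\langle [\![u]\!],\{\!\!\{\mu\nabla v\cdot\mathbf n\}\!\!\}\rangle_\Gamma$ arising from subdomain integration by parts cancels exactly the corresponding term in $a_0^\star$, leaving only the clean $\{\!\!\{\mu\nabla u\cdot\mathbf n\}\!\!\}$ pairing.
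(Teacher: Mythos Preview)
Your proposal is correct and matches the paper's proof almost exactly: existence and $H^1$ stability from \eqref{con-infsup-1}, Steps~1--3 carried over from \Cref{dual-reg-1} with the same auxiliary $v$ and density arguments, yielding $-\mu_i\Delta u_i=f$, $[\![\mu\nabla u\cdot\mathbf n]\!]=0$, and $\|\{\!\!\{\mu\nabla u\cdot\mathbf n\}\!\!\}\|_{\frac12,\Gamma}\le C\|f\|_0$.  The only difference is the closing regularity argument: the paper routes through \Cref{basic-robin} by writing $\|\mu_1\nabla u_1\cdot\mathbf n+[\![u]\!]\|_{\frac12,\Gamma}\le \|\mu_1\nabla u_1\cdot\mathbf n\|_{\frac12,\Gamma}+\sum_i\|u_i\|_{1,\Omega_i}$ and bounding $[\![u]\!]$ via the inf-sup $H^1$ stability, whereas you apply Neumann regularity (\Cref{cylinder-2} or \Cref{nuoyiman}) directly on each subdomain and absorb the residual $\|u_i\|_0$ with the same $H^1$ stability; both routes use the same ingredients and your version is slightly more direct.
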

	\begin{proof}
		The exsitence of a unique solution $u\in V$ followed by LBB condition of $a_0(\cdot,\cdot)$ and the Lax-Milgram Theorem.
		
		We define the  index $i$ as $i=1,2$. Similar to Steps 1 and 2 in the proof of \Cref{dual-reg-1}, we can follow the same steps to obtain the results $-\mu_i\Delta u_i = f|_{\Omega_i}$ and $[\![\mu\nabla u\cdot\bm n]\!]=0$.
		
		Since $H^{\frac1 2}(\Gamma)$ is dense in $H^{-\frac 1 2 }(\Gamma)$, then for any $\theta\in H^{-\frac 1 2}(\Gamma), \|\theta\|_{-\frac 1 2,\Gamma}=1$ and $\varepsilon\in (0,1)$, there exists $\beta_{\theta}\in H^{\frac 1 2}(\Gamma)$ such that
		\begin{align}
			\|\theta-\beta_{\theta}\|_{-\frac 1 2,\Gamma}\le \varepsilon.
		\end{align}
		For any given $\beta_{\theta}\in H^{\frac 1 2}(\Gamma)$, find $v\in V$ such 
		\begin{align*}
			-\nabla\cdot(\mu \nabla v)&=0\qquad \text{in }\Omega,\\
			[\![\mu \nabla v\cdot\bm n]\!]&=0\qquad \text{on }\Gamma,\\
			[\![v]\!]&=\beta_{\theta}\qquad\!\! \text{on }\Gamma.
		\end{align*}
		It holds the stability
		\begin{align}
			\|v_{1}\|_{\Omega_1}+\|v_2\|_{\Omega_2}\le  C\|\beta_\theta\|_{-\frac 1 2,\Gamma}.
		\end{align}\label{beta-thtea-reg2}
		Then we have
		\begin{align}
			\begin{split}
				a^\star_0(u, v)
				&=
				\mu_1\langle u_1,\nabla v_1\cdot\bm n \rangle_{\Gamma}
				-\mu_2\langle  u_2, \nabla v_2\cdot\bm n\rangle_{\Gamma}\\
				&\quad	+\langle \{\!\!\{\mu\nabla u\cdot\bm n\}\!\!\},[\![v]\!] \rangle_{\Gamma}
				-\langle [\![u]\!],\{\!\!\{\mu\nabla v\cdot\bm n\}\!\!\} \rangle_{\Gamma}\\
				&=\langle \{\!\!\{\mu\nabla u\cdot\bm n\}\!\!\},\beta_{\theta} \rangle_{\Gamma}.
			\end{split}
		\end{align}
		So it holds
		\begin{align}
			\langle \{\!\!\{\mu\nabla u\cdot\bm n\}\!\!\},\beta_{\theta} \rangle_{\Gamma}=(f, v_1)_{\Omega_1}+(f, v_2)_{\Omega_2}
		\end{align}
		Then it holds
		{ 
			\begin{align*}
				&\|\{\!\!\{\mu\nabla u\cdot\bm n\}\!\!\}\|_{\frac 1 2,\Gamma}\\
				&\qquad =\sup_{\theta\in H^{-\frac 1 2}(\Gamma),\|\theta\|_{-\frac 1 2,\Gamma}=1}\langle\{\!\!\{\mu\nabla u\cdot\bm n\}\!\!\}, \theta \rangle_{\Gamma}\\
				&\qquad=\sup_{\theta\in H^{-\frac 1 2}(\Gamma),\|\theta\|_{-\frac 1 2,\Gamma}=1}\langle \{\!\!\{\mu\nabla u\cdot\bm n\}\!\!\}, \theta -\beta_{\theta}+\beta_{\theta}\rangle_{\Gamma}\\
				&\qquad\le 	\|\{\!\!\{\mu\nabla u\cdot\bm n\}\!\!\}\|_{\frac 1 2,\Gamma}\varepsilon
				+C(\|f\|_{\Omega_1}
				+\|f\|_{\Omega_2})(1+\varepsilon)
				,
		\end{align*}}
		
		which leads to
		\begin{align}
			\|\{\!\!\{\mu\nabla u\cdot\bm n\}\!\!\}\|_{\frac 1 2,\Gamma}\le C\frac{1+\varepsilon}{1-\varepsilon}(\|f\|_{\Omega_1}
			+\|f\|_{\Omega_2}).
		\end{align}
		So
		\begin{align}
			\|\{\!\!\{\mu\nabla u\cdot\bm n\}\!\!\}\|_{\frac 1 2,\Gamma}\le C(\|f\|_{\Omega_1}
			+\|f\|_{\Omega_2}).
		\end{align}
		Therefore,
		\begin{align*}
			\|
			\mu_1\nabla u_1\cdot\bm n\|_{\frac 1 2,\Gamma}
			&=	\|
			\{\!\!\{\mu\nabla u\cdot\bm n\}\!\!\}+w_1[\![\mu\nabla u\cdot\bm n]\!]\|_{\frac 1 2,\Gamma} \\
			&\le\|\{\!\!\{\mu\nabla u\cdot\bm n\}\!\!\}\|_{\frac 1 2,\Gamma} +\|[\![\mu\nabla u\cdot\bm n]\!]\|_{\frac 1 2,\Gamma} \\
			&=\|\{\!\!\{\mu\nabla u\cdot\bm n\}\!\!\}\|_{\frac 1 2,\Gamma} \\
			&\le  C\|f\|_0.
		\end{align*}
		Then by \eqref{con-infsup-1}, we can get
		\begin{align*}
			\|u_i\|_{1,\Omega_i}\le C\sup_{0\neq v\in V}\frac{a^{\star}_0(u,v)}{\|v\|_1}\le C\frac{(f,v)}{\|v\|_1}\le C\|f\|_0
		\end{align*}
		Finally, it holds
		\begin{align*}
			&\mu_1\| u\|_{2,\Omega_1} + 	\mu_2\| u\|_{2,\Omega_2} \\
			&\qquad\le  C(\mu_1\|\Delta u_1\|_{\Omega_1}
			+\mu_2\|\Delta u_2\|_{\Omega_2} \\
			&\qquad\quad
			+\|
			\mu_1\nabla u_1\cdot\bm n+[\![u]\!]\|_{\frac 1 2,\Gamma}
			+\|[\![\mu\nabla u\cdot\bm n]\!]\|_{\frac 1 2,\Gamma})\\
			&\qquad\le  C(\mu_1\|\Delta u_1\|_{\Omega_1}
			+\mu_2\|\Delta u_2\|_{\Omega_2} \\
			&\qquad\quad
			+\|
			\mu_1\nabla u_1\cdot\bm n\|_{\frac 1 2,\Gamma}+\sum_{i=1}^2\|u_i\|_{1,\Omega_i}
			+\|[\![\mu\nabla u\cdot\bm n]\!]\|_{\frac 1 2,\Gamma})\\
			&\qquad\le C\|f\|_0.
		\end{align*}
		We have finished  our proof.
	\end{proof}
	\subsection{Error estimations}
	{ 
		\begin{theorem}[Energy error estimation]\label{penaltyfree-energy}
			Let $u\in V$ and $u_h\in V_h$ be the solution of \eqref{fem-org-penaltyfree} and \eqref{fem-penalty-free}, respectively. Then for $u_1\in H^{k+1}(\Omega)$ and $u_2\in H^{k+1}(\Omega)$, we have
			\begin{align*}
				\interleave u-u_h\interleave_*
				\le Ch^k(\mu_1^{\frac 1 2}|u_1|_{k+1,\Omega_1}+\mu_2^{\frac 1 2}|u_2|_{k+1,\Omega_2}).
			\end{align*}
		\end{theorem}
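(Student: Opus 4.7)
The plan is a standard Strang-type argument: combine the discrete inf--sup condition in \Cref{thm:dis-lbb2} with the Galerkin orthogonality \eqref{a_0or}, reducing the energy error to Scott--Zhang interpolation bounds. First, I would split
\begin{align*}
\interleave u-u_h\interleave_*\le\interleave u-\mathcal{J}_h u\interleave_*+\interleave \mathcal{J}_h u-u_h\interleave_*,
\end{align*}
so the real task becomes estimating the discrete error $\xi_h:=\mathcal{J}_h u-u_h\in V_h$.

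By \eqref{dis-lbb}, there exists $v_h\in V_h$ with $\interleave v_h\interleave_*=1$ such that
\begin{align*}
\beta\interleave \xi_h\interleave_* \le a_0(\xi_h,v_h)+s_h(\xi_h,v_h).
\end{align*}
Since $u$ is smooth enough, $s_h(u,v_h)=0$, and \eqref{a_0or} can be recast as $a_0(u-u_h,v_h)+s_h(u-u_h,v_h)=0$. Subtracting this from the right-hand side above yields
\begin{align*}
a_0(\xi_h,v_h)+s_h(\xi_h,v_h)=a_0(\mathcal{J}_h u-u,v_h)+s_h(\mathcal{J}_h u-u,v_h).
\end{align*}
The continuity bound $|a_0(w,v_h)|\le M\|w\|_*\interleave v_h\interleave_*$ stated right after \eqref{fem-penalty-free}, together with Cauchy--Schwarz on $s_h$, then reduces the problem to controlling the two interpolation-type quantities $\|\mathcal{J}_h u-u\|_*$ and $s_h(\mathcal{J}_h u-u,\mathcal{J}_h u-u)^{1/2}$.

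To estimate $\|\mathcal{J}_h u-u\|_*$, the volume gradient and $c_0^{1/2}\|\h^{-1/2}[\![\cdot]\!]\|_\Gamma$ contributions follow from \Cref{est-jh} together with $c_0\le 2\min(\mu_1,\mu_2)$; the extra $c_0^{-1/2}\|\h^{1/2}\{\!\!\{\mu\nabla(\cdot)\cdot\bm n\}\!\!\}\|_\Gamma$ piece is dispatched by the weighted splitting $w_i\mu_i\le c_0$, which converts it into $\|\h^{1/2}\mu_i^{1/2}\nabla(u_i-\mathcal{J}_h u_i)\|_\Gamma$, and is then controlled by \eqref{est-jh-partial-2}, exactly as in the estimate of the term $E_3$ inside the proof of \Cref{lem:dual}. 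For the ghost-penalty residual, the extension $\widetilde u_i\in H^{k+1}(\Omega)$ has $[\![D_{n_E}^l\widetilde u_i]\!]=0$ across every interior edge, so only the jumps of $\mathcal{I}_h\widetilde u_i$ survive; applying a trace inequality followed by \Cref{SZ} and an inverse estimate edge-by-edge on $\mathfrak{E}^i_\Gamma$ gives the claimed $h^{k}\mu_i^{1/2}|u_i|_{k+1,\Omega_i}$ scaling. Summing, we obtain $\interleave\xi_h\interleave_*\le Ch^k(\mu_1^{1/2}|u_1|_{k+1,\Omega_1}+\mu_2^{1/2}|u_2|_{k+1,\Omega_2})$, and the conclusion follows by the triangle inequality.

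The step I expect to be the most delicate is the ghost-penalty estimate $s_h(\mathcal{J}_h u-u,\mathcal{J}_h u-u)^{1/2}$: unlike the jump of the function itself, this seminorm mixes normal derivatives of all orders $1\le l\le k$ on edges of $\mathcal{T}_h^\Gamma$, and one has to verify that the Scott--Zhang interpolant, combined with inverse inequalities, yields the optimal $h^{k-l+1/2}$ scaling on each such edge uniformly in how $\Gamma$ cuts the background mesh; this is the main technical point where the cut-element geometry enters, and it has to be checked with some care before the two pieces can be cleanly combined.
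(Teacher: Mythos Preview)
Your proposal is correct and follows essentially the same route as the paper: split via the triangle inequality, apply the discrete inf--sup condition \eqref{dis-lbb} to $\xi_h=\mathcal{J}_hu-u_h$, use the orthogonality \eqref{a_0or} together with $s_h(u,v_h)=0$ to reduce to $a_0(\mathcal{J}_hu-u,v_h)+s_h(\mathcal{J}_hu-u,v_h)$, and then close with the continuity bound and the Scott--Zhang estimates from \Cref{est-jh}. Your identification of the ghost-penalty term as the step requiring care is accurate; the paper handles it by a trace inequality on each edge of $\mathfrak{E}^i_\Gamma$ followed by the local high-order approximation of $\mathcal{J}_h$, exactly as you outline.
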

		
		\begin{proof} 
			Due to $s_h(u,v_h)=0$, then
			\begin{align*}
				s_h(\mathcal{J}_hu,v_h)&\le s_h(\mathcal{J}_hu-u,\mathcal{J}_hu-u)^{\frac{1}{2}}s_h(v_h,v_h)^{\frac{1}{2}}\\
				&\le C(\sum_{i=1}^2\mu_i\sum_{T\in\Omega_i^*}\sum_{l=k}^{k+1}h^{2(l-1)}\|D^l(u_i-\mathcal{J}_hu_i)\|^2_T)^{\frac{1}{2}}\interleave v_h\interleave_*\\
				&\le  Ch^k(\mu_1^{\frac{1}{2}}|u_1|_{k+1,\Omega_1}+\mu_2^{\frac{1}{2}}|u_2|_{k+1,\Omega_2})\interleave v_h\interleave_*.
			\end{align*}
			By \Cref{thm:dis-lbb2}, we have
			\begin{align*}
				\interleave u_h-\mathcal J_h u\interleave_*
				&\le \sup_{0\neq v_h\in V_h}\frac{a(u_h- \mathcal J_h u,  v_h)+s_h(\mathcal{J}_hu-u_h,v_h)}{\beta\interleave v_h\interleave_*} \\
				&=  \sup_{0\neq v_h\in V_h}\frac{a(  u- \mathcal J_h u,  v_h)-s_h(\mathcal{J}_hu,v_h)}{\beta\interleave v_h\interleave_*} \\
				&\le C(\frac{M}{\beta}\| u-\mathcal J_h u\|_*+s_h(u- J_h u,u- J_h u)^{\frac{1}{2}}).
			\end{align*}
			With the triangle inequality to
			\begin{align*}
				\interleave u-u_h\interleave_*&\le \interleave u-\mathcal{J}_hu\interleave_*+\interleave u_h-\mathcal{J}_h u\interleave_*\\
				&\le C(\frac{M}{\beta}+1)\|u-\mathcal{J}_hu\|_*+s_h(u- \mathcal J_h u,u- \mathcal J_h u)^{\frac{1}{2}})\\
				&\le Ch^k(\mu_1^{\frac 1 2}|u_1|_{k+1,\Omega_1}+\mu_2^{\frac 1 2}|u_2|_{k+1,\Omega_2})
			\end{align*}.
		\end{proof}
	}
	Before we give the theorem for the $L^2$ error estimation, we first give a estimation for $a_0(\cdot,\cdot)$.
	\begin{lemma}\label{lem:dual-penalty-free} Let $u\in V$ and $u_h\in V_h$ be the solution of \eqref{fem-org-penaltyfree} and \eqref{fem-penalty-free}, respectively. Then for $u_1\in H^{k+1}(\Omega_1)$, $u_2\in H^{k+1}(\Omega_2)$, $\Phi\in V$, $\Phi_1\in H^2(\Omega_1)$
		and $\Phi_2\in H^2(\Omega_2)$, we have
		{ 
			\begin{align*}
				&|a_0(u-u_h,\Phi-\mathcal J_h\Phi)|
				\\&\le C          h^{k+1}
				(\mu_1^{\frac 1 2}|u_1|_{k+1,\Omega_1}+\mu_2^{\frac 1 2}|u_2|_{k+1,\Omega_2}) (\mu_1^{\frac 1 2}|\Phi_1|_{2,\Omega_1}+
				\mu_2^{\frac 1 2}|\Phi_2|_{2,\Omega_2}
				) .
		\end{align*}}
	\end{lemma}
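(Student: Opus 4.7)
The plan is to mirror the proof of \Cref{lem:dual} with two adjustments. First, since $a_0$ drops the penalty $\langle c_0\h^{-1}[\![\cdot]\!],[\![\cdot]\!]\rangle_\Gamma$, one of the five terms in the earlier decomposition disappears, leaving four. Second, although the $[\![u-u_h]\!]$-jump is still controlled by the energy norm $\interleave\cdot\interleave_*$ (which still contains $c_0\|\h^{-1/2}[\![\cdot]\!]\|_\Gamma^2$, so \Cref{penaltyfree-energy} provides a jump bound), the $L^2(\Gamma)$-control of $\nabla(\mathcal J_hu-u_h)$ on cells cut by $\Gamma$ must now be extracted from the ghost penalty $s_h$ rather than from an explicit interface penalty. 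I therefore decompose
\begin{align*}
a_0(u-u_h,\Phi-\mathcal J_h\Phi)
&=\mu_1(\nabla(u_1-u_{h,1}),\nabla(\Phi_1-\mathcal J_h\Phi_1))_{\Omega_1}\\
&\quad+\mu_2(\nabla(u_2-u_{h,2}),\nabla(\Phi_2-\mathcal J_h\Phi_2))_{\Omega_2}\\
&\quad-\langle \{\!\!\{\mu\nabla(u-u_h)\cdot\bm n\}\!\!\},[\![\Phi-\mathcal J_h\Phi]\!]\rangle_\Gamma\\
&\quad+\langle [\![u-u_h]\!],\{\!\!\{\mu\nabla(\Phi-\mathcal J_h\Phi)\cdot\bm n\}\!\!\}\rangle_\Gamma\\
&=:E_1+E_2+E_3+E_4
\end{align*}
and bound the four pieces separately.

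Terms $E_1$, $E_2$, and $E_4$ will be handled essentially as in \Cref{lem:dual}. For $E_1+E_2$, Cauchy--Schwarz together with \Cref{est-jh} (on the $\Phi$ factor) and \Cref{penaltyfree-energy} (on the $u-u_h$ factor) yields the required $O(h^{k+1})$ bound with the weights $\mu_1^{\frac12}|u_1|_{k+1,\Omega_1}+\mu_2^{\frac12}|u_2|_{k+1,\Omega_2}$ and $\mu_1^{\frac12}|\Phi_1|_{2,\Omega_1}+\mu_2^{\frac12}|\Phi_2|_{2,\Omega_2}$. For $E_4$, I would use the inequality $\|c_0^{\frac12}\h^{-\frac12}[\![u-u_h]\!]\|_\Gamma\le\interleave u-u_h\interleave_*$ from \Cref{penaltyfree-energy}, split $\{\!\!\{\mu\nabla(\Phi-\mathcal J_h\Phi)\cdot\bm n\}\!\!\}$ into its two weighted pieces, and use $w_i\mu_i\le c_0\le 2\min(\mu_1,\mu_2)$ to absorb the prefactor into $\mu_i^{\frac12}$; then \eqref{est-jh-partial-2} delivers the outstanding $O(h)$ factor $\mu_i^{\frac12}|\Phi_i|_{2,\Omega_i}$.

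The main obstacle lies in $E_3$, where $\|c_0^{-\frac12}\h^{\frac12}\{\!\!\{\mu\nabla(u-u_h)\cdot\bm n\}\!\!\}\|_\Gamma$ asks for an $L^2(\Gamma)$-estimate of the gradient of $u-u_h$ on cut cells, and this is exactly the quantity that no term of the penalty-free bilinear form bounds directly. The plan is to insert $\mathcal J_hu$: the interpolation contribution $u-\mathcal J_hu$ is handled at once with \eqref{est-jh-partial-2}, whereas for the discrete remainder $\mathcal J_hu-u_h\in V_h$ I apply a trace/inverse inequality on each cut cell to reduce its $\Gamma$-trace to a volume norm on the cut-cell neighbourhood $\Omega_i^*$. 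At this point the ghost-penalty equivalence $\|\mu_i^{\frac12}\nabla v_{h,i}\|_{\Omega_i^*}^2\le C(\|\mu_i^{\frac12}\nabla v_{h,i}\|_{\Omega_i}^2+s_h(v_h,v_h))$ from \cite{MR3268662} (already invoked in the proof of \Cref{thm:dis-lbb2}) becomes crucial, since it upgrades the bulk gradient norm to $\Omega_i^*$ and thereby lets $\interleave\mathcal J_hu-u_h\interleave_*\le\interleave\mathcal J_hu-u\interleave_*+\interleave u-u_h\interleave_*$, bounded by \Cref{penaltyfree-energy} and \Cref{est-jh}, absorb the cut-cell trace. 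Combining this with the bound $\|c_0^{\frac12}\h^{-\frac12}[\![\Phi-\mathcal J_h\Phi]\!]\|_\Gamma\le Ch\sum_i\mu_i^{\frac12}|\Phi_i|_{2,\Omega_i}$ (from \eqref{est-jh-partial} and $c_0\le 2\min(\mu_1,\mu_2)$) closes $E_3$, and summing the four pieces produces the claimed estimate.
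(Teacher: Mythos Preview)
Your proposal is correct and follows the same four-term decomposition and estimates as the paper's proof, which simply states that $E_1{+}E_2$, $E_3$, and $E_4$ are bounded ``similar to proof of \Cref{lem:dual}''. Your explicit use of the ghost-penalty equivalence $\|\mu_i^{1/2}\nabla v_{h,i}\|_{\Omega_i^*}^2\le C(\|\mu_i^{1/2}\nabla v_{h,i}\|_{\Omega_i}^2+s_h(v_h,v_h))$ to control the cut-cell trace in $E_3$ is a detail the paper leaves implicit but which your argument makes precise.
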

	\begin{proof} We use the definition of $a(\cdot,\cdot)$ to get
		\begin{align*}
			&a_0(u-u_h,\Phi-\mathcal J_h\Phi)\\
			&\quad=\mu_1(\nabla (u_1-u_{h,1}), \nabla (\Phi_1-\mathcal J_h\Phi_1) )_{\Omega_1}
			+ \mu_2(\nabla (u_2-u_{h,2}), \nabla (\Phi_2-\mathcal J_h\Phi_2))_{\Omega_2}\\
			&\qquad
			-\langle \{\!\!\{\mu\nabla (u-u_h)\cdot\bm n\}\!\!\},[\![\Phi-\mathcal J_h \Phi]\!] \rangle_{\Gamma}
			+\langle [\![u-u_h]\!],\{\!\!\{\mu\nabla (\Phi-\mathcal J_h\Phi)\cdot\bm n\}\!\!\} \rangle_{\Gamma}\\
			&\quad=:\sum_{i=1}^4 E_i
		\end{align*}
		Similar to proof of \Cref{lem:dual}, we can get estimates
		
		\begin{align*}
			\begin{split}
				|E_1+E_2|
				&\le C          h^{k+1}
				(\mu_1^{\frac 1 2}|u_1|_{k+1,\Omega_1}+\mu_2^{\frac 1 2}|u_2|_{k+1,\Omega_2}) (\mu_1^{\frac 1 2}|\Phi_1|_{2,\Omega_1}+
				\mu_2^{\frac 1 2}|\Phi_2|_{2,\Omega_2}
				),
			\end{split}
		\end{align*}
		and
		\begin{align*}
			\begin{split}
				|E_3|
				&\le C          h^{k+1}
				(\mu_1^{\frac 1 2}|u_1|_{k+1,\Omega_1}+\mu_2^{\frac 1 2}|u_2|_{k+1,\Omega_2}) (\mu_1^{\frac 1 2}|\Phi_1|_{2,\Omega_1}+
				\mu_2^{\frac 1 2}|\Phi_2|_{2,\Omega_2}
				) ,
			\end{split}
		\end{align*}
		and
		\begin{align*}
			\begin{split}
				|E_4|
				&\le C          h^{k+1}
				(\mu_1^{\frac 1 2}|u_1|_{k+1,\Omega_1}+\mu_2^{\frac 1 2}|u_2|_{k+1,\Omega_2}) (\mu_1^{\frac 1 2}|\Phi_1|_{2,\Omega_1}+
				\mu_2^{\frac 1 2}|\Phi_2|_{2,\Omega_2}
				) .
			\end{split}
		\end{align*}
		We use all the estimations above to get finish our proof.
	\end{proof}	
	
	
	{ 
		\begin{theorem}[$L^2$ error estimation]\label{L2-penalty-free}  Let $u\in V$ and $u_h\in V_h$ be the solution of \eqref{fem-org-penaltyfree} and \eqref{fem-penalty-free}, respectively. Then for $u_1\in H^{k+1}(\Omega)$ and $u_2\in H^{k+1}(\Omega)$, we have
			\begin{align*}
				\| u-u_h\|_0
				\le Ch^{k+1}\max(\mu_1^{-\frac 1 2},\mu_1^{-\frac{3}{2}},\mu_2^{-\frac 1 2},\mu_2^{-\frac{3}{2}})
				(\mu_1^{\frac 1 2}|u_1|_{k+1,\Omega_1}+\mu_2^{\frac 1 2}|u_2|_{k+1,\Omega_2}).
			\end{align*}
		\end{theorem}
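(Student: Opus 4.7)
The plan is to run the Aubin--Nitsche duality argument in parallel with the proof of \Cref{a>=1}, the one novel ingredient being the appearance of the ghost penalty on the right of the modified Galerkin orthogonality. First I would introduce the dual problem: find $\Phi\in V$ such that $a_0^{\star}(\Phi,v)=(u-u_h,v)$ for all $v\in V$, whose unique solvability and the regularity estimate $\mu_1\|\Phi_1\|_{2,\Omega_1}+\mu_2\|\Phi_2\|_{2,\Omega_2}\le C\|u-u_h\|_0$ are supplied by \Cref{dual-reg-1-pfree}. Setting $v=u-u_h$ and using the adjoint identity \eqref{sys-pfree} gives $\|u-u_h\|_0^{2}=a_0(u-u_h,\Phi)$, and inserting the Scott--Zhang interpolant $\mathcal J_h\Phi\in V_h$ splits this into
\[
a_0(u-u_h,\Phi)=a_0(u-u_h,\Phi-\mathcal J_h\Phi)+a_0(u-u_h,\mathcal J_h\Phi).
\]

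For the first summand, \Cref{lem:dual-penalty-free} immediately produces the desired $O(h^{k+1})$ bound in terms of $(\mu_1^{1/2}|u_1|_{k+1,\Omega_1}+\mu_2^{1/2}|u_2|_{k+1,\Omega_2})(\mu_1^{1/2}|\Phi_1|_{2,\Omega_1}+\mu_2^{1/2}|\Phi_2|_{2,\Omega_2})$. For the second summand, the modified orthogonality \eqref{a_0or} yields $a_0(u-u_h,\mathcal J_h\Phi)=s_h(u_h,\mathcal J_h\Phi)$, and since $s_h(u,\cdot)=0$ this equals $s_h(u_h-u,\mathcal J_h\Phi)$. Applying Cauchy--Schwarz in the $s_h$-seminorm, the first factor $s_h(u_h-u,u_h-u)^{1/2}$ is controlled by the energy estimate \Cref{penaltyfree-energy} as $Ch^k(\mu_1^{1/2}|u_1|_{k+1,\Omega_1}+\mu_2^{1/2}|u_2|_{k+1,\Omega_2})$, while the second factor $s_h(\mathcal J_h\Phi,\mathcal J_h\Phi)^{1/2}$ can be bounded by $Ch(\mu_1^{1/2}|\Phi_1|_{2,\Omega_1}+\mu_2^{1/2}|\Phi_2|_{2,\Omega_2})$ by the same face-wise accounting used inside the proof of \Cref{penaltyfree-energy}. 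Collecting both contributions, $\|u-u_h\|_0^{2}$ is bounded by $Ch^{k+1}$ times the same product of seminorms.

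To conclude, I would rewrite $\mu_i^{1/2}|\Phi_i|_{2,\Omega_i}=\mu_i^{-1/2}\cdot\mu_i|\Phi_i|_{2,\Omega_i}$ and invoke the dual regularity of \Cref{dual-reg-1-pfree} to convert the $\Phi$-factor into the claimed $\mu$-weighted multiple of $\|u-u_h\|_0$; cancelling one power of $\|u-u_h\|_0$ then yields the target inequality. The principal obstacle is the bound $s_h(\mathcal J_h\Phi,\mathcal J_h\Phi)^{1/2}\le Ch(\mu_1^{1/2}|\Phi_1|_{2,\Omega_1}+\mu_2^{1/2}|\Phi_2|_{2,\Omega_2})$: $\Phi$ is only $H^{2}$-regular while $s_h$ contains normal-derivative jumps up to order $k$, so for $l=1$ one must exploit the continuity of $\nabla\Phi$ across interior edges to reduce the jump to the interpolation error, and for $l\ge 2$ one has to trade the higher-order jumps of $\mathcal J_h\Phi$ for the admissible $H^2$-seminorm of $\Phi$ via a polynomial inverse inequality on each element of $\Omega_i^{*}$, absorbing the extra powers of $h$ into the weight $h^{2l-1}$ carried on each face. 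This is exactly the ghost-penalty bookkeeping already made explicit in the proof of \Cref{penaltyfree-energy}, so it transfers here without new ingredients.
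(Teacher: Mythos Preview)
Your proposal is correct and follows essentially the same duality argument as the paper: introduce the dual problem for $a_0^\star$, invoke the regularity of \Cref{dual-reg-1-pfree}, split off $\mathcal J_h\Phi$, bound $a_0(u-u_h,\Phi-\mathcal J_h\Phi)$ by \Cref{lem:dual-penalty-free}, and treat the remaining piece $a_0(u-u_h,\mathcal J_h\Phi)=s_h(u_h,\mathcal J_h\Phi)$ via Cauchy--Schwarz in the $s_h$-seminorm combined with the energy estimate \Cref{penaltyfree-energy}. The paper packages the ghost-penalty step as $s_h(u_h,\mathcal J_h\Phi)\le C\interleave u-u_h\interleave_*\,\interleave \Phi-\mathcal J_h\Phi\interleave_*$, which is the same Cauchy--Schwarz you write out explicitly.

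One point where you are in fact more careful than the paper: the paper's bound implicitly treats $s_h(\Phi-\mathcal J_h\Phi,\Phi-\mathcal J_h\Phi)$ as if $s_h(\Phi,\cdot)=0$, but for $k\ge 2$ the terms $D_{n_E}^{l}\Phi$ with $l\ge 2$ are not classically defined for an $H^2$ function. Your workaround---subtracting $\Phi$ only at the $l=1$ level (using continuity of $\nabla\widetilde\Phi_i$ across interior faces) and for $l\ge 2$ bounding $\|D^l\mathcal J_h\Phi\|_T$ directly by a polynomial inverse inequality down to $D^2$ followed by $\|D^2\mathcal J_h\Phi\|_T\le C|\Phi|_{2,\omega(T)}$---is exactly the right way to close this, and it is the same trick already used for $\|\nabla^2(v_i-\mathcal J_hv_i)\|_{0,T}$ in the proof of \Cref{est-jh}. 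So your ``principal obstacle'' is genuine but you have correctly identified how to resolve it with tools already in the paper.
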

		\begin{proof} We define the dual problem: Find $\Phi\in V$ such that
			\begin{align}\label{dual2}
				a^\star_0(\Phi, v)
				=(u-u_h, v)
			\end{align}
			holds for all $v\in V$. Then by \Cref{dual-reg-1}, the above problem has a unique solution $u\in V$ satisfying
			\begin{align}\label{dualreg2}
				\mu_1\|\Phi_1\|_{2,\Omega_1}+\mu_2\|\Phi_2\|_{2,\Omega_2}\le  C\max\{\mu_1^{-1},\mu_2^{-1},1\}\|u-u_h\|_0.
			\end{align}
			We take $v=u-u_h$ in \eqref{dual2} ,  the property \eqref{sys} and  the orthogonality in \eqref{a_0or} to get
			\begin{align*}
				\|u-u_h\|_0^2 =	a^\star(\Phi, u-u_h) 
				=a(u-u_h,\Phi)
				=a(u-u_h,\Phi-\mathcal J_h\Phi)+s_h(u_h,\mathcal{J}_h\Phi).
			\end{align*}
			With the result we got in \Cref{penaltyfree-energy}
			\begin{align*}
				s_h(u_h,\mathcal{J}_h\Phi)\le C\interleave u-u_h\interleave_*\interleave\Phi-\mathcal{J}_h\Phi\interleave_*,
			\end{align*}
			and the result in \Cref{lem:dual}, the regularity estimation \eqref{dualreg2} to get
			\begin{align*} 
				&\|u-u_h\|_0^2            \\
				&\le         C h^{k+1}
				(\mu_1^{\frac 1 2}|u_1|_{k+1,\Omega_1}+\mu_2^{\frac 1 2}|u_2|_{k+1,\Omega_2}) (\mu_1^{\frac 1 2}|\Phi_1|_{2,\Omega_1}+
				\mu_2^{\frac 1 2}|\Phi_2|_{2,\Omega_2}
				)\\
				&
				\le         C h^{k+1}
				\max(\mu_1^{-\frac 1 2},\mu_2^{-\frac 1 2})
				(\mu_1^{\frac 1 2}|u_1|_{k+1,\Omega_1}+\mu_2^{\frac 1 2}|u_2|_{k+1,\Omega_2}) (\mu_1\|\Phi_1\|_{2,\Omega_1}+
				\mu_\|\Phi_2\|_{2,\Omega_2}
				)\\
				&\le C          h^{k+1}
				\max(\mu_1^{-\frac 1 2},\mu_1^{-\frac{3}{2}},\mu_2^{-\frac 1 2},\mu_2^{-\frac{3}{2}})
				(\mu_1^{\frac 1 2}|u_1|_{k+1,\Omega_1}+\mu_2^{\frac 1 2}|u_2|_{k+1,\Omega_2}) \|u-u_h\|_0.
			\end{align*}
		\end{proof}
	}

	\bibliographystyle{plain}
	\bibliography{references}

\begin{thebibliography}{10}

\bibitem{MR1885715}
Douglas~N. Arnold, Franco Brezzi, Bernardo Cockburn, and L.~Donatella Marini.
\newblock Unified analysis of discontinuous {G}alerkin methods for elliptic
  problems.
\newblock {\em SIAM J. Numer. Anal.}, 39(5):1749--1779, 2001/02.

\bibitem{MR3328193}
Faker Ben~Belgacem, Christine Bernardi, Faten Jelassi, and Maimouna
  Mint~Brahim.
\newblock Finite element methods for the temperature in composite media with
  contact resistance.
\newblock {\em J. Sci. Comput.}, 63(2):478--501, 2015.

\bibitem{boiveau2015fitted}
Thomas Boiveau, Erik Burman, and Susanne Claus.
\newblock Penalty-free nitsche method for interface problems.
\newblock In St{\'e}phane P.~A. Bordas, Erik Burman, Mats~G. Larson, and
  Maxim~A. Olshanskii, editors, {\em Geometrically Unfitted Finite Element
  Methods and Applications}, pages 183--210, Cham, 2017. Springer International
  Publishing.

\bibitem{Brenner2008}
Susanne~C. Brenner and L.~Ridgway Scott.
\newblock {\em The mathematical theory of finite element methods}, volume~15 of
  {\em Texts in Applied Mathematics}.
\newblock Springer, New York, third edition, 2008.

\bibitem{MR2738930}
Erik Burman.
\newblock Ghost penalty.
\newblock {\em C. R. Math. Acad. Sci. Paris}, 348(21-22):1217--1220, 2010.

\bibitem{MR3800035}
Erik Burman, Johnny Guzm\'{a}n, Manuel~A. S\'{a}nchez, and Marcus Sarkis.
\newblock Robust flux error estimation of an unfitted {N}itsche method for
  high-contrast interface problems.
\newblock {\em IMA J. Numer. Anal.}, 38(2):646--668, 2018.

\bibitem{MR2257119}
Erik Burman and Paolo Zunino.
\newblock A domain decomposition method based on weighted interior penalties
  for advection-diffusion-reaction problems.
\newblock {\em SIAM J. Numer. Anal.}, 44(4):1612--1638, 2006.

\bibitem{MR2002258}
Maksymilian Dryja.
\newblock On discontinuous {G}alerkin methods for elliptic problems with
  discontinuous coefficients.
\newblock volume~3, pages 76--85. 2003.
\newblock Dedicated to Raytcho Lazarov.

\bibitem{Ern2021}
Alexandre Ern and Jean-Luc Guermond.
\newblock {\em Finite elements {I}---{A}pproximation and interpolation},
  volume~72 of {\em Texts in Applied Mathematics}.
\newblock Springer, Cham, [2021] \copyright 2021.

\bibitem{MR2491426}
Alexandre Ern, Annette~F. Stephansen, and Paolo Zunino.
\newblock A discontinuous {G}alerkin method with weighted averages for
  advection-diffusion equations with locally small and anisotropic diffusivity.
\newblock {\em IMA J. Numer. Anal.}, 29(2):235--256, 2009.

\bibitem{MR1941489}
Anita Hansbo and Peter Hansbo.
\newblock An unfitted finite element method, based on {N}itsche's method, for
  elliptic interface problems.
\newblock {\em Comput. Methods Appl. Mech. Engrg.}, 191(47-48):5537--5552,
  2002.

\bibitem{MR2257456}
Jianguo Huang and Jun Zou.
\newblock Uniform a priori estimates for elliptic and static {M}axwell
  interface problems.
\newblock {\em Discrete Contin. Dyn. Syst. Ser. B}, 7(1):145--170, 2007.

\bibitem{MR3668542}
Peiqi Huang, Haijun Wu, and Yuanming Xiao.
\newblock An unfitted interface penalty finite element method for elliptic
  interface problems.
\newblock {\em Comput. Methods Appl. Mech. Engrg.}, 323:439--460, 2017.

\bibitem{MR3268662}
Andr\'{e} Massing, Mats~G. Larson, Anders Logg, and Marie~E. Rognes.
\newblock A stabilized {N}itsche fictitious domain method for the {S}tokes
  problem.
\newblock {\em J. Sci. Comput.}, 61(3):604--628, 2014.

\bibitem{MR341903}
J.~Nitsche.
\newblock \"{U}ber ein {V}ariationsprinzip zur {L}\"{o}sung von
  {D}irichlet-{P}roblemen bei {V}erwendung von {T}eilr\"{a}umen, die keinen
  {R}andbedingungen unterworfen sind.
\newblock {\em Abh. Math. Sem. Univ. Hamburg}, 36:9--15, 1971.

\bibitem{MR2431403}
B\'{e}atrice Rivi\`ere.
\newblock {\em Discontinuous {G}alerkin methods for solving elliptic and
  parabolic equations}, volume~35 of {\em Frontiers in Applied Mathematics}.
\newblock Society for Industrial and Applied Mathematics (SIAM), Philadelphia,
  PA, 2008.
\newblock Theory and implementation.

\bibitem{MR1011446}
L.~Ridgway Scott and Shangyou Zhang.
\newblock Finite element interpolation of nonsmooth functions satisfying
  boundary conditions.
\newblock {\em Math. Comp.}, 54(190):483--493, 1990.

\end{thebibliography}

\end{document}